\newtheorem{theorem}{Theorem}
\newtheorem{definition}[theorem]{Definition}
\newtheorem{proposition}[theorem]{Proposition}
\newtheorem{lemma}[theorem]{Lemma}
\newtheorem{corollary}[theorem]{Corollary}
\newtheorem{conjecture}[theorem]{Conjecture}
\begin{document}

\title{Packings in real projective spaces}

\author{Matthew Fickus\footnote{Department of Mathematics and Statistics, Air Force Institute of Technology, Wright-Patterson AFB, OH} \qquad John Jasper\footnote{Department of Mathematics and Statistics, South Dakota State University, Brookings, SD} \qquad Dustin G.\ Mixon\footnote{Department of Mathematics, The Ohio State University, Columbus, OH}}
\date{}

\maketitle

\begin{abstract}
This paper applies techniques from algebraic and differential geometry to determine how to best pack points in real projective spaces.
We present a computer-assisted proof of the optimality of a particular 6-packing in $\mathbb{R}\mathbf{P}^3$, we introduce a linear-time constant-factor approximation algorithm for packing in the so-called Gerzon range, and we provide local optimality certificates for two infinite families of packings.
Finally, we present perfected versions of various putatively optimal packings from Sloane's online database, along with a handful of infinite families they suggest, and we prove that these packings enjoy a certain weak notion of optimality.
\end{abstract}

\section{Introduction}

Given a compact metric space, it is natural to ask how to pack $n$ points so that the minimum distance is maximized.
According to legend, packing in the unit 2-sphere with the great-circle distance incited a dispute in 1694 between Isaac Newton and David Gregory~\cite{Casselman:04}.
More recently, packing in $\{0,1\}^k$ with the Hamming distance has produced codebooks that form the foundation of error correction in digital communication~\cite{ClarkC:81}.

Motivated by optimal tumor treatment with high-energy laser beams, Conway, Hardin and Sloane in 1996 were the first to study packings in Grassmannian spaces with the chordal distance~\cite{ConwayHS:96}.
The past decade has seen a surge of work in the special case of projective spaces due to applications in multiple description coding~\cite{StrohmerH:03}, digital fingerprinting~\cite{MixonQKF:13}, compressed sensing~\cite{BandeiraFMW:13}, and quantum state tomography~\cite{RenesBSC:04}.
Developments in this packing problem have largely built on the precursor work of Rankin~\cite{Rankin:55,Rankin:56}, Grey~\cite{Grey:62}, Seidel~\cite{Seidel:73} and Welch~\cite{Welch:74}, the vast majority of advances leveraging ideas from combinatorial design; see \cite{FickusM:15} for a survey.

In this paper, we apply techniques from algebraic and differential geometry to obtain new packing results over real projective spaces.
Our intent is to introduce new methods for tackling these packing problems while simultaneously introducing the problems to the broader mathematical community.
In particular, the authors believe there is ample opportunity for algebraic geometers to make significant contributions to this burgeoning research program.

\subsection{Preliminaries}

It is convenient to identify unit vectors with the lines that they span.
In this spirit, we define an \textbf{$n$-packing} in $\mathbb{R}\mathbf{P}^{d-1}$ to be a sequence of $n$ unit vectors in $\mathbb{R}^d$.
Throughout, we will use $\Phi$ to denote both the sequence $\{\varphi_i\}_{i\in[n]}$ in $\mathbb{R}^d$ and the $d\times n$ matrix whose $i$th column vector is $\varphi_i$; here, $[n]:=\{1,\ldots,n\}$, with the understanding that $[0]$ corresponds to the empty set.
Given two unit vectors $x$ and $y$, the chordal distance between $\operatorname{span}\{x\}$ and $\operatorname{span}\{y\}$ is a decreasing function of the \textbf{correlation} $|\langle x,y\rangle|$; thanks to this relationship, our representation of points in $\mathbb{R}\mathbf{P}^{d-1}$ by unit vectors in $\mathbb{R}^d$ is not problematic.
The \textbf{coherence} of an $n$-packing $\Phi=\{\varphi_i\}_{i\in[n]}$ is given by
\[
\mu(\Phi):=\max_{\substack{i,j\in[n]\\i\neq j}}|\langle \varphi_i,\varphi_j\rangle|.
\]
An $n$-packing $\Phi$ in $\mathbb{R}\mathbf{P}^{d-1}$ is \textbf{optimal} if $\mu(\Phi)\leq\mu(\Psi)$ for all $n$-packings $\Psi$ in $\mathbb{R}\mathbf{P}^{d-1}$; such packings necessarily exist by compactness.
Optimal packings are called \textit{Grassmannian frames} in the finite frames literature~\cite{StrohmerH:03}, but we avoid this terminology here for the sake of clarity.

Optimal packings in the $d=2$ case are trivial: $\mathbb{R}\mathbf{P}^1$ is isometric to the circle, and so optimal packings amount to regularly spaced points by the pigeonhole principle~\cite{BenedettoK:06}.
Packing is more difficult in higher dimensions.
In~\cite{Welch:74}, Welch introduced a useful lower bound:
\begin{equation}
\label{eq.welch proof}
0
\leq\Big\|\Phi\Phi^\top-\frac{n}{d}I\Big\|_F^2
=\|\Phi^\top\Phi\|_F^2-\frac{n^2}{d}
\leq n+n(n-1)\mu(\Phi)^2-\frac{n^2}{d},
\end{equation}
where rearranging gives
\begin{equation}
\label{eq.welch bound}
\mu(\Phi)
\geq\sqrt{\frac{n-d}{d(n-1)}}.
\end{equation}
By inspecting the proof \eqref{eq.welch proof} of Welch's bound, one observes that equality occurs precisely when both $\Phi\Phi^\top=(n/d)I$ and all of the off-diagonal entries of the Gram matrix $\Phi^\top\Phi$ are $\pm\mu(\Phi)$.
Packings that satisfy $\Phi\Phi^\top=(n/d)I$ are known as \textbf{tight frames}, whereas the second condition establishes equiangularity between the packing members.
Tight frames, corresponding to \textit{eutactic stars} in Euclidean geometry~\cite{Schlafli:49}, are particularly interesting because they provide redundant versions of orthonormal bases, as evidenced by the Parseval identity they satisfy:
\[
\sum_{i\in[n]}|\langle x,\varphi_i\rangle|^2
=\alpha\|x\|^2
\qquad
\forall x\in\mathbb{R}^d
\]
for some $\alpha>0$.
Tight frames have the defining property that the row vectors of $\Phi$ are orthogonal with equal norm, and completing these rows to an equal-norm orthogonal basis produces the row vectors of an $(n-d)\times n$ tight frame called a \textbf{Naimark complement} of $\Phi$.
If $\Phi$ is tight with unit vectors, we may scale its Naimark complement to produce another tight frame $\Psi$ with unit vectors, and the coherences of these packings are related:
\[
\frac{d}{n}\Phi^\top\Phi+\frac{n-d}{n}\Psi^\top\Psi
=I
\qquad
\Longrightarrow
\qquad
\frac{d}{n}\mu(\Phi)-\frac{n-d}{n}\mu(\Psi)=0.
\]

Overall, the (optimal) packings that achieve equality in the Welch bound are the so-called \textbf{equiangular tight frames (ETFs)}~\cite{FickusM:15}.
ETFs are in one-to-one correspondence with a subclass of strongly regular graphs~\cite{Waldron:09}.
In particular, given an ETF, negate packing elements so that they all have positive inner product with the last element, and then remove this last element to produce a $(n-1)$-subpacking $\Psi$; the sign pattern of the Gram matrix $\Psi^\top\Psi$ then corresponds to the adjacency matrix of a strongly regular graph on $n-1$ vertices.
As such, the plethora of strongly regular graphs tabulated by Brouwer in~\cite{Brouwer:online} lead to immediate solutions to the packing problem.

Unfortunately, ETFs only exist for certain choices of $(d,n)$.
For instance, in the nontrivial case where $1<d<n-1$, we know that $n$ must lie in the \textbf{Gerzon range}, defined by
\begin{equation}
\label{eq.gerzon}
d+\sqrt{2d+\tfrac{1}{4}}+\tfrac{1}{2}
\leq n
\leq \tfrac{1}{2}d(d+1).
\end{equation}
Indeed, the upper bound follows from the linear independence of the outer products $\{\varphi_i\varphi_i^\top\}_{i\in[n]}$ (hint:\ consider their Gram matrix), and the lower bound then follows from the Naimark complement.
Furthermore, $(d,n)$ must satisfy certain integrality conditions~\cite{FickusM:15} that make it exceedingly difficult for $(d,n)$ to admit an ETF.
These restrictions have prompted researchers to investigate alternatives to the Welch bound.
To date, the most fruitful approach along these lines has been to consider $n$ beyond the Gerzon range, where alternatives like the orthoplex bound, the Levenshtein bound, and more generally, Delsarte's linear programming bound begin to take hold~\cite{BodmannH:16,HaasHM:17}.
In this regime, the provably optimal packings tend to be tight frames with small angle sets.
This has spurred interest in so-called \textbf{biangular} frames~\cite{HaasCTC:17}, in which the off-diagonal Gram matrix entries all have the form $\pm\alpha$ and $\pm\beta$ for some $\alpha,\beta\geq0$.
As we will see, biangular packings with $\alpha=0$ emerge frequently in practice, in which case we call the packing \textbf{orthobiangular}.

\subsection{Motivating applications}

There are a number of applications that leverage ensembles of unit vectors to encode or decode some sort of signal.
For each of these applications, performance is a function of the coherence of the ensemble, with smaller coherence resulting in better performance.
We review a few of these applications below.

\subsubsection{Compressed sensing}

Given an $n\times n$ orthogonal matrix $Q$, consider vectors of the form $u=Qx$, where $x$ has at most $k$ nonzero entries; that is, $u$ is \textbf{$k$-sparse} in $Q$.
For example, natural images tend to be well-approximated by vectors which are sparse in the adjoint of the discrete wavelet transform, and this feature is exploited in the JPEG 2000 compression standard~\cite{JPEG2000}.
The goal of compressed sensing is to leverage this sparsity in order to solve appropriately selected underdetermined linear systems~\cite{CandesRT:06,Donoho:06}.
As an application of this theory, compressive MRI scans require a fraction of acquisition time over conventional MRI~\cite{LustigDSP:08}.

Let $A$ be $d\times n$ with $d$ much smaller than $n$.
To reconstruct $u$ from measurements of the form $y=Au$, we write $\Phi=AQ$.
Then solving for $u$ is equivalent to the following non-convex program:
\[
\text{find}
\quad
x
\quad
\text{subject to}
\quad
y=\Phi x,
\quad
|\operatorname{supp}(x)|\leq k.
\]
Provided $k\leq d/2$, then $Q^{-1}u$ is the unique solution to the above program for almost every choice of $A$, in which case it suffices to find the smallest $k$ for which the above program is feasible.
This suggests the following convex relaxation, which can be solved with linear programming:
\[
\text{minimize}
\quad
\|x\|_1
\quad
\text{subject to}
\quad
y=\Phi x.
\]
Impressively, this relaxation solves the original non-convex program when the columns of $\Phi$ have unit norm and small coherence $\mu(\Phi)$.
In particular, there exist universal constants $c_1,c_2>0$ such that the relaxation exactly recovers all $k$-sparse vectors $x$ with $k\leq c_1\mu(\Phi)^{-1}$~\cite{DonohoE:03}, as well as most $k$-sparse vectors with $k\leq c_2\mu(\Phi)^{-2}/\log n$~\cite{Tropp:08}\footnote{Beware that in~\cite{Tropp:08}, the $\log(N/\delta)$ that appears in Model (M1) should be $1/\log(N/\delta)$, as can be verified by writing out the last line of the proof of Theorem~14.}.
For this last statement, ``most'' can be replaced by ``all'' when $\Phi$ has independent Gaussian entries~\cite{BaraniukDDW:08}, in which case $k$ can be as large as $O(d/\log n)$.
No explicit matrix is known to exhibit this behavior~\cite{BandeiraFMW:13}.
To date, the best explicit matrices enable exact recovery whenever $k\leq c_3d^{1/2+\epsilon}$ for some small $c_3,\epsilon>0$~\cite{BourgainDFKK:11,BandeiraMM:17}; in both cases, the matrices are constructed from optimal packings in projective spaces.

\subsubsection{Digital fingerprinting}

Suppose a content owner wishes to distribute a file to a specific list of recipients, but also wants to identify leakers if the file is disclosed to additional recipients.
Let $x\in\mathbb{R}^d$ denote the original file, and let $n$ denote the number of intended recipients.
Then the content owner can transmit a slightly personalized version of $x$ to each of the recipients.
For example, given a collection of \textbf{fingerprints} $\{\varphi_i\}_{i\in[n]}$ in $\mathbb{R}^d$, suppose the $i$th user receives $y_i=x+c\varphi_i$ for some $c>0$.
Here, $c$ is chosen to be small enough so that each $y_i$ is subjectively true to the original $x$ (i.e., the user's experience in enjoying $y_i$ isn't disturbed by the inclusion of $\varphi_i$), but large enough so that the fingerprint will effectively incriminate the $i$th user in the event that, say, $y_i$ becomes popular on the internet.

In order to combat this hinderance to piracy, we envision an attack in which multiple users $S\subseteq[n]$ collude to produce a forgery:
\[
\hat{x}
=\sum_{i\in S}a_iy_i+e
=x+\sum_{i\in S}a_i\varphi_i+e,
\]
where the second equality requires the weights $\{a_i\}_{i\in[n]}$ to sum to $1$, and $e\sim N(0,\sigma^2I)$ is Gaussian noise.
Here, $\sigma^2$ is chosen to be small enough to be subjectively true to the original $x$, but large enough to hopefully cover the culprits' tracks.
Now suppose the content owner encounters the forgery $\hat{x}$.
Then by subtracting the original $x$, he isolates a noisy combination of fingerprints $\{\varphi_i\}_{i\in S}$.
At this point, he may identify the fingerprint that correlates most with this combination:
\[
j:=\arg\max_{i\in[n]}|\langle \hat{x}-x,\varphi_i\rangle|.
\]
Notice that $j$ is a random variable due to $e$.
It turns out that $j\in S$ with large probability provided the coherence $\mu(\Phi)$ of the fingerprints is small~\cite{MixonQKF:13}.
Furthermore, once one of the colluders is identified, the others can be identified through the legal process.

\subsubsection{Quantum state tomography}

The goal of quantum state tomography is to estimate the state of a quantum mechanical system, modeled as a self-adjoint, positive semidefinite $d\times d$ matrix $\rho$ of unit trace.
We consider measurements performed in terms of a discrete \textbf{positive operator--valued measure (POVM)}, that is, a sequence $\{F_i\}_{i\in[n]}$ of self-adjoint, positive semidefinite $d\times d$ matrices that sum to the identity matrix.
When measuring $\rho$ with a POVM $\{F_i\}_{i\in[n]}$, the outcome is a random variable $X$ taking values in $[n]$ with probabilities given by the Born rule
\[
\operatorname{Pr}(X=i)=\operatorname{tr}(\rho F_i).
\]
These probabilities can be approximated from empirical frequencies after sufficiently many measurements, and then one can estimate $\rho$ by solving a linear system, provided $\{F_i\}_{i\in[n]}$ spans the $d^2$-dimensional real vector space of self-adjoint $d\times d$ matrices.
Such POVMs are called \textbf{informationally complete (IC)}.

In the minimal case where $n=d^2$, if we insist that $\operatorname{tr}(F_i)$ and $\|F_i\|_F$ both be constant over $i\in[n]$, then the linear inverse problem is best conditioned when each of the $F_i$'s has the form $F_i=(1/d)\varphi_i\varphi_i^*$ with $\mu(\Phi)=1/\sqrt{d+1}$, provided there exists an ensemble $\Phi=\{\varphi_i\}_{i\in[n]}$ with these specifications.
Such IC-POVMs are called \textbf{symmetric (SIC-POVMs, or SICs)}.
Considering $1/\sqrt{d+1}$ equals the Welch bound for $n=d^2$, SICs are necessarily optimal packings in $\mathbb{C}\mathbf{P}^{d-1}$.
This particularly natural choice of POVM has been proposed as a standard quantum measurement, and lies at the foundation of quantum Bayesianism~\cite{FuchsS:11}.

Despite being such natural mathematical objects, SICs are only known to exist for finitely many dimensions $d$~\cite{Flammia:online}, though they are conjectured to exist for every $d\geq2$.
In fact, Zauner~\cite{Zauner:99} conjectures that for every $d\geq2$, there exists an eigenvector of a certain order-$3$ unitary operator whose orbit under the Heisenberg--Weyl group forms a SIC.
To date, there is numerical evidence in favor of Zauner's conjecture for all $d\leq151$~\cite{FuchsHS:17}, and recent work suggests that a constructive proof of this conjecture may require progress on Hilbert's twelfth problem~\cite{ApplebyFMY:17}.

\subsubsection{Multiple description coding}

Suppose Alice wishes to transmit a message to Bob, but the channel through which she must communicate will corrupt the message.
How can Alice make sure that Bob receives the intended message?
We consider an \textbf{erasure channel}, which is modeled as follows:
Given a transmitted vector $y\in\mathbb{R}^n$, the received vector has the form $E(y+e)$, where $e\sim N(0,\sigma^2I)$ is Gaussian noise and $E$ is some diagonal matrix of zeros and ones, thereby erasing certain entries.
In practice, we can predict the number of erasures in $E$, but not the locations of these erasures.

In order for Bob to have enough information to reconstruct Alice's message, Alice needs to redundantly encode her message before transmitting through the erasure channel.
To this end, we consider linear encodings, where a message of the form $x\in\mathbb{R}^d$ is encoded as $\{\langle x,\varphi_i\rangle\}_{i\in[n]}$ for some ensemble $\{\varphi_i\}_{i\in[n]}$.
In other words, Alice transmits $y=\Phi^*x$, and so Bob receives $z=E(\Phi^*x+e)$.

In principle, Bob can compute the maximum likelihood estimator for $x$ by solving a least-squares problem.
Indeed, with probability $1$, the non-erased entries of $\Phi^*x+e$ correspond to the support $S$ of $z$, and so given $z$, Bob can restrict to $S$ to deduce $z_i=\langle x,\varphi_i\rangle+e_i$ for each $i\in S$.
Letting $\Phi_S$ denote the submatrix of $\Phi$ with columns in $S$, then the least-squares solution is given by $(\Phi_S\Phi_S^*)^{-1}\Phi_Sz$.
Unfortunately, computing this estimate requires Bob to invert a large matrix, leading to a prohibitively long runtime.
As a fast alternative, Bob ignores which entries of $z$ were erased and instead takes the estimate $\hat{x}=(\Phi\Phi^*)^{-1}\Phi x$.
Indeed, since the decoding matrix $\Phi^\dagger=(\Phi\Phi^*)^{-1}\Phi$ no longer depends on $S$, it can be computed in advance rather than on the fly.

Overall, Alice encodes $x$ as $y=\Phi^*x$ before transmitting, and upon receipt of $z=E(y+e)$, Bob decodes with $\hat{x}=\Phi^\dagger z$.
To evaluate the quality of reconstruction under $k$ erasures, we compute the worst-case mean squared error:
\[
\operatorname{MSE}_k
:=\max_{\substack{S\subseteq[n]\\|S|=n-k}}\max_{\substack{x\in\mathbb{R}^d\\\|x\|=1}}\mathbb{E}\Big[~\|\hat{x}-x\|^2~\Big|~S~\Big].
\] 
Here, $S$ denotes the locations of the nonzero diagonal entries in $E$, and the expectation is taken over the distribution of $e$.
Notice that Alice can theoretically diminish the effect of the additive noise $e$ by multiplying $\Phi$ by an arbitrarily large scalar.
However, power limitations preclude this, and so we fix its Frobenius norm for the analysis; say, $\|\Phi\|_F^2=n$.
Subject to this constraint, then $\operatorname{MSE}_0$ is minimized when $\Phi$ is a tight frame; this follows from the proof of Theorem~3.1 in~\cite{GoyalKK:01}.
Next, restricting to tight frames, $\operatorname{MSE}_1$ is minimized precisely by tight frames comprised of unit vectors (see Proposition~2.1 in~\cite{HolmesP:04}, cf.~\cite{CasazzaK:03}).
As a dual result, the ensembles of unit vectors that minimize $\operatorname{MSE}_1$ are tight frames (Theorem~4.4 in~\cite{GoyalKK:01}).
For unit norm tight frames, $\operatorname{MSE}_2$ is an increasing function of the coherence $\mu(\Phi)$ by the proof of Proposition~2.2 in~\cite{HolmesP:04}.
In the case where $\Phi$ is comprised of unit vectors, but is not necessarily tight, Section~4 of~\cite{StrohmerH:03} estimates $\operatorname{MSE}_k$ in terms of the coherence $\mu(\Phi)$; for this analysis, one decodes with a scalar multiple $\Phi$ instead of $\Phi^\dagger$ (when $\Phi$ is tight, $\Phi^\dagger=(d/n)\Phi$).
Overall, Bob can quickly and reliably reconstruct the intended message provided Alice encodes it with an optimal packing in $\mathbb{R}\mathbf{P}^{d-1}$.

\subsection{The state of play}

Over the past decade, the pursuit of optimal packings in projective spaces has mostly focused on finding ETFs.
ETFs are determined by the phase pattern in the Gram matrix, and in the real case, these phases necessarily lie in a discrete set $\{+1,-1\}$, reducing the problem to combinatorics.
As discussed earlier, real ETFs are in one-to-one correspondence with certain strongly regular graphs, and considering these graphs have been the object of intense study for over 50 years, it is likely that any news on the existence of real ETFs will not come easily.
That being said, the authors recently discovered new real ETFs in~\cite{FickusJMP:16}, and the non-existence of $19\times76$ and $20\times96$ real ETFs was recently proved with computer assistance in~\cite{AzarijaM:15,AzarijaM:16}.
Tables~1 and~3 of~\cite{FickusM:15} list the pairs $(d,n)$ that could possibly admit a real ETF, along with notes describing all existing constructions; today, $(33,66)$ and $(37,148)$ are the lowest-dimensional open cases.

In the complex setting, ETFs are still determined by the phase pattern in the Gram matrix, but this observation no longer reduces the problem to combinatorics.
Regardless, the complex case has found a lot of progress in the last decade through a multitude of approaches:
\begin{itemize}
\item
\textbf{Group actions.}
Given a finite group $G$ and a representation $\rho\colon G\rightarrow U(d)$, find a seed vector $\varphi\in\mathbb{C}^d$ such that $\{\rho(g)\varphi\}_{g\in G}$ forms an ETF after identifying collinear members of the orbit.
In the case where $G$ is abelian, the resulting ETFs correspond to difference sets in the combinatorial design literature~\cite{StrohmerH:03,XiaZG:05,DingF:07}.
The Heisenberg--Weyl group can be used to construct all known SICs~\cite{Flammia:online}, as well as an infinite family of non-SIC ETFs~\cite{IversonJM:17}.
\item
\textbf{Generalize small examples.}
Many low-dimensional ETFs can be constructed by hand with the help of some combinatorial insight; see for example Janet Tremain's influential notes on equiangular lines~\cite{Tremain:09}.
When studying such a construction, one might identify the significant combinatorial features and generalize to an infinite family of ETFs.
In this way, Tremain's notes directly led to the families in~\cite{FickusMT:12} and~\cite{FickusJMP:16}.
The hyperovals-based construction in~\cite{FickusMJ:16} followed from a similar approach.
\item
\textbf{Complexify real examples.}
Brouwer's table of strongly regular graphs~\cite{Brouwer:online} provides notes for constructing the tabulated graphs, which one might generalize to produce complex ETFs.
This approach led to the generalized quadrangle--based construction in~\cite{FickusJMPW:16}.
One may also generalize strongly regular graphs in terms of the role they play with real ETFs.
This leads to other combinatorial structures such as distance regular antipodal covers of the complete graph~\cite{CoutinhoGSZ:16} and association schemes~\cite{IversonJM:16}, both of which produce complex ETFs.
\item
\textbf{Combinatorify algebraic examples.}
In some cases, the difference sets that produce ETFs from abelian group actions can be generalized to combinatorial objects that enjoy less algebraic structure.
Under the right conditions, these more general objects produce additional ETFs, accordingly.
For example, the ETFs in~\cite{JasperMF:13} follow from studying the McFarland difference sets, while the ETFs in~\cite{FickusJ:18} are inspired by the Davis--Jedwab--Chen difference sets.
\end{itemize}
Despite all of these approaches for constructing complex ETFs, Zauner's conjecture remains elusive.
Furthermore, unlike the real case, we lack strong necessary conditions on $(d,n)$ for the existence of a complex ETF.
(The real case enjoys integrality conditions due to the integrality of phases in the Gram matrix; see for example~\cite{SustikTDH:07}.)
As such, while we now have a plethora of complex ETFs (as tabulated in~\cite{FickusM:15}), we have little concept of what remaining dimensions ought to be investigated.
In pursuit of strong necessary conditions, the first author posed the following conjecture at Sampling Theory and Applications 2015~\cite{Mixon:online}:

\begin{conjecture}
Consider the quantities
\[
d,
\qquad
n-d,
\qquad
n-1.
\]
There exists an $n$-vector equiangular tight frame in $\mathbb{C}^d$ only if one of these quantities divides the product of the other two.
\end{conjecture}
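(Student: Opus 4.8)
The plan is to recast the divisibility as an integrality condition on the spectrum of the signature matrix, settle it in the cases where that spectrum is controlled, and then isolate the obstruction in the general complex case. Write $\mu=\mu(\Phi)$ and let $S$ be the Hermitian matrix with zero diagonal and $S_{ij}=\mu^{-1}\langle\varphi_i,\varphi_j\rangle$ for $i\neq j$, so that $S$ has unimodular off-diagonal entries and $\Phi^*\Phi=I+\mu S$. An ETF is a tight frame, so $\Phi^*\Phi$ has eigenvalue $n/d$ with multiplicity $d$ and eigenvalue $0$ with multiplicity $n-d$; hence $S$ has exactly two eigenvalues, $\lambda_+=\mu^{-1}(n/d-1)$ of multiplicity $d$ and $\lambda_-=-\mu^{-1}$ of multiplicity $n-d$. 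Using $\mu^2=\tfrac{n-d}{d(n-1)}$ one computes $\lambda_+^2=\tfrac{(n-d)(n-1)}{d}$, $\lambda_-^2=\tfrac{d(n-1)}{n-d}$, and $\lambda_+\lambda_-=-(n-1)$. Thus the conjecture is precisely the statement that at least one of the three rationals $\lambda_+^2=\tfrac{(n-d)(n-1)}{d}$, $\lambda_-^2=\tfrac{d(n-1)}{n-d}$, $\tfrac{d(n-d)}{n-1}$ is an integer — equivalently, that one of $d$, $n-d$, $n-1$ divides the product of the other two. (The three conditions are permuted by passage to the Naimark complement, which swaps $d\leftrightarrow n-d$ and fixes $n-1$, so producing any single one of them will do.)

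The first line of attack is to exploit integrality of the entries of $S$. If those entries are algebraic integers, then the characteristic polynomial of $S$ is monic with algebraic-integer coefficients, so $\lambda_+$ is an algebraic integer, and hence so is $\lambda_+^2=\tfrac{(n-d)(n-1)}{d}$; being a rational algebraic integer it lies in $\mathbb{Z}$, i.e., $d\mid(n-d)(n-1)$ (and symmetrically $(n-d)\mid d(n-1)$, from $\lambda_-^2$). This settles the conjecture for every real ETF — there $S$ has entries in $\{-1,1\}$ — and more generally whenever the normalized inner products are algebraic integers; it recovers the relevant part of the Sustik--Tropp--Dhillon--Heath integrality conditions. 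A second structured case is that of a harmonic ETF built from a $(v,k,\lambda)$-difference set, where $d=k$ and $n=v$: here the conjecture holds through the third divisibility, because $\lambda=\tfrac{k(k-1)}{v-1}=\tfrac{d(d-1)}{n-1}$ is an integer and $d(n-d)\equiv-d(d-1)\pmod{n-1}$.

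The hard part is the general complex ETF, where the entries of $S$ are only unimodular: there is no integral characteristic polynomial, and $\lambda_\pm$ need not even be algebraic integers — indeed they can be exceedingly complicated algebraic numbers, as the SIC case and its links to Hilbert's twelfth problem show, so the first argument has no direct analogue. The plan would be to manufacture some other integral structure from $S$ and read one of the three divisibilities off its spectrum, and I see three avenues: (i) work with the gauge-invariant data — the triangle products $S_{ij}S_{jk}S_{ki}$ and their products around longer cycles — and look for a count of these quantities that is forced to be an integer and that encodes $\lambda_+^2$; (ii) push the structured-case arguments above through the Bose--Mesner/association-scheme machinery whenever the ETF generates a scheme; (iii) reduce the relation $S^2=(n-1)I+(\lambda_++\lambda_-)S$ modulo suitably chosen primes. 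The crux — and the reason the statement is still a conjecture — is the absence of any canonical integral (or even cyclotomic) model for a general complex ETF, which is exactly what powers the arguments that do work; I expect a uniform proof to require a genuinely new integrality invariant, and a sensible intermediate target is the conjecture for all ETFs whose signature matrix lies in a fixed number ring, or for all group-covariant ETFs, before attacking the fully general case.
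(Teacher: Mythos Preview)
The paper offers no proof of this statement: it is presented as an open conjecture, with the only cited progress being Sz\"{o}ll\H{o}si's computer-assisted nonexistence result for $(d,n)\in\{(3,8),(5,8)\}$. So there is nothing in the paper to compare your argument against.

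Your write-up is not a proof either, and to your credit you say so explicitly. What you have produced is a clean reformulation plus two correct partial results. The spectral translation is right: with $S=\mu^{-1}(\Phi^*\Phi-I)$ one has $\lambda_+^2=(n-d)(n-1)/d$, $\lambda_-^2=d(n-1)/(n-d)$, and the third quotient $d(n-d)/(n-1)$ equals $(d\mu)^2$, so the conjecture is exactly the assertion that at least one of these three rationals is an integer. Your real-ETF argument is the standard one (integer signature matrix $\Rightarrow$ algebraic-integer eigenvalues $\Rightarrow$ rational eigenvalue-squares are integers), and in fact yields \emph{two} of the three divisibilities simultaneously, which is stronger than the conjecture asks. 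The harmonic/difference-set check is also correct: $k(v-k)\equiv -k(k-1)\equiv -\lambda(v-1)\equiv 0\pmod{v-1}$.

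Where you correctly identify the gap is the general complex case: absent any integrality of the entries of $S$, none of your three suggested avenues currently closes. Avenue~(ii) is circular in practice, since an ETF generating a commutative association scheme already forces the kind of algebraic-integer structure you want, but most complex ETFs are not known to do this. Avenue~(iii) is vague as stated---reducing $S^2=(n-1)I+(\lambda_++\lambda_-)S$ modulo a prime requires first clearing denominators in $S$, which reintroduces the very obstruction you are trying to avoid. Avenue~(i), extracting an integral invariant from triple products, is the most promising direction but is genuinely new mathematics; the SIC connection to Hilbert's twelfth problem that you allude to is precisely evidence that such invariants, if they exist, are deep. In short: your proposal is an accurate map of the terrain, not a route through it, and the paper does not claim otherwise.
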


To date, the only progress has been in~\cite{Szollosi:14}, which provides a computer-assisted proof that no $8$-vector ETF exists in $\mathbb{C}^3$ or $\mathbb{C}^5$ (the result follows from 16 hours of Gr\"{o}bner basis calculation).
In these cases, the three quantities above are $3$, $5$ and $7$, and so nonexistence matches the conjecture's prediction.
Today, $(d,n)=(4,9)$ is the lowest-dimensional open case.

While the vast majority of work on projective packings has focused on achieving equality in the Welch bound, these make up a small fraction of the cases.
For the other cases, consider low-dimensional instances first:
Since $\mathbb{R}\mathbf{P}^1$ is isometrically isomorphic to the circle, optimal packings correspond to equally spaced points~\cite{BenedettoK:06}.
Similarly, $\mathbb{C}\mathbf{P}^1$ is isometrically isomorphic to the sphere, and so optimal packings correspond to spherical codes; while the optimal codes are now known for $n\leq 14$ and $n=24$ (see~\cite{MusinT:15} and references therein), the problem is open for the remaining cases (see Sloane's table~\cite{Sloane:online2} for the best known spherical codes.

The state of affairs is similar in higher dimensions:
While in the real case, integrality conditions indicate that the Welch bound is not tight for most $(d,n)$ in the Gerzon range, there is no known quantitative improvement over the Welch bound for any $(d,n)$ in this range.
Beyond the Gerzon range, alternatives like the orthoplex bound and Delsarte's linear programming bound take effect.
Equality is achieved in the orthoplex bound by mutually unbiased bases~\cite{DurtEBZ:10}, as well as various ``marriage packings''~\cite{BodmannH:16} (see Section~6.2).
Delsarte's linear programming bound can be viewed as a generalization of the Welch bound; other than ETFs and mutually unbiased bases, there are only finitely many known packings that achieve equality in this bound~\cite{HaasHM:17}.

Beyond what is described above, very little is known about optimal projective packings.
A notable exception is the case of packing $5$ points in $\mathbb{R}\mathbf{P}^2$, where the optimal packing comes from removing any vector from the $3\times 6$ ETF; this was proved by Benedetto and Kolesar in~\cite{BenedettoK:06} following the work of T\'{o}th~\cite{Toth:65} and using techniques that resemble the analysis of spherical codes~\cite{MusinT:15}.
Related work can be found in~\cite{BallingerBCGKS:09,CohnW:12,CohnHM:16}.
There has also been some work to numerically optimize packings, such as in~\cite{DhillonHST:08}.
For real projective spaces, Neil Sloane tabulates the best known packings in~\cite{Sloane:online}, which we refer to as \textbf{putatively optimal} in the sequel.
Other than the spherical codes~\cite{Sloane:online2} that correspond to packing in $\mathbb{C}\mathbf{P}^1$, there is currently no table of best known packings in complex projective spaces.
This disparity in data makes the real case far more amenable to study, as evidenced by the present paper.

\subsection{Roadmap}
\label{subsection.roadmap}

In the following section, we will present a computer-assisted proof of the optimality of a particular 6-packing in $\mathbb{R}\mathbf{P}^3$.
Our proof leverages quantifier elimination over the reals, as computed by cylindrical algebraic decomposition.
Section~3 then introduces a method of constructing packings that are within a constant factor of the Welch bound whenever $n$ lies in the Gerzon range.
This general construction relies heavily on a complex packing we introduce based on a famous character sum estimate due to Andr\'{e} Weil.
Next, we turn our attention to certifying locally optimal packings.
Our certificates are based on a general method of reformulating certain manifold optimization problems as convex programs, which we introduce in a stand-alone subsection before applying our technique to certify two infinite families of packings.
In Section~5, we introduce infinite families of near-optimal packings that arise from certain combinatorial designs, and we conclude in Section~6 by describing perfected\footnote{The putatively optimal packings in Sloane's database~\cite{Sloane:online} frequently feature emergent properties such as tightness and few angles (up to numerical precision). In these cases, we refer to the corresponding ``perfected'' version as the infinite-precision neighbor that exactly satisfies tightness and few angles, obtained with the help of cylindrical algebraic decomposition~\cite{Collins:75}, say. When perfecting these putatively optimal packings, we always check that the perfected version has strictly smaller coherence than the original.} versions of various putatively optimal packings that appear in Sloane's database~\cite{Sloane:online}.
See Table~\ref{table} for a summary of the low-dimensional instances of our results.

\section{Small optimal packings}
\label{sec.cad}

Proving the optimality of a given packing amounts to demonstrating that the packing's coherence achieves equality in a lower bound.
Along these lines, the literature contains a multitude of accomplishments involving the Welch and orthoplex bounds~\cite{FickusM:15,BoykinSTW:05,BodmannH:16}.
However, for nearly every pair $(d,n)$, the optimal $n$-packing in $\mathbb{R}\mathbf{P}^{d-1}$ does not achieve equality in either bound~\cite{FickusM:15}, indicating the need for better bounds.
In this section, we present an algorithm to compute the optimal lower bound in the case where $n=d+2$, and we apply our algorithm to solve the $d=4$ case.

First, observe that Gram matrices $G$ of $n$-packings in $\mathbb{R}\mathbf{P}^{d-1}$ with coherence at most $\mu$ form a subset of $\mathbb{R}^{n\times n}$ defined by polynomial equalities and inequalities:
\[
G^\top=G,
\quad
\operatorname{diag}(G)=\mathbf{1}, 
\quad
G\succeq0,
\quad
\operatorname{rank}(G)\leq d,
\quad
|G_{ij}|\leq\mu
\quad
\forall i,j\in[n],i\neq j.
\]
In particular, the positive semidefinite constraint may be implemented using Sylvester's criterion, specifically, by forcing the principal minors to be nonnegative.
Furthermore, the rank constraint may be implemented by asking all $(d+1)\times(d+1)$ minors to vanish.
Overall, for each Gram matrix form and sign pattern, our problem reduces to the following:
Given real polynomials $\{p_i(x_1,\ldots,x_{d+1},\mu)\}_{i\in I}$ and $\{q_j(x_1,\ldots,x_{d+1},\mu)\}_{j\in J}$, find the $\mu$'s satisfying
\begin{equation}
\label{eq.quantifier}
\exists x\in\mathbb{R}^{d+1}
\quad
\text{such that}
\quad
p_i(x,\mu)=0
\quad
\text{and}
\quad
q_j(x,\mu)\geq0
\quad
\forall i\in I,j\in J.
\end{equation}
This amounts to quantifier elimination over the reals, the plausibility of which was first demonstrated by the Tarski--Seidenberg theorem~\cite{BochnakCR:98}.
Indeed, the $\mu$'s that satisfy \eqref{eq.quantifier} are the solutions to a finite collection of univariate polynomial equalities and inequalities that are constructed by the proof of Tarski--Seidenberg.
While the implied algorithm is too slow for real-world implementation, an alternative algorithm called \textbf{cylindrical algebraic decomposition (CAD)}~\cite{Collins:75} allows for quantifier elimination over the reals with reasonable runtimes (in sufficiently small cases) and enjoys a built-in implementation in Mathematica.

As such, one could in principle use CAD to find the smallest $\mu$ for which there exists a packing of coherence at most $\mu$, but the runtime is far too slow to solve even modestly sized problems.
Instead, we will leverage combinatorics to decrease the complexity of our CAD queries.
We start with a lemma whose proof introduces some of our techniques:

\begin{lemma}
\label{lem.spanning}
For $n>d$, every optimal $n$-packing in $\mathbb{R}\mathbf{P}^{d-1}$ is a spanning set.
\end{lemma}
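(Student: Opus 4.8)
The plan is to argue by contradiction: suppose $\Phi=\{\varphi_i\}_{i\in[n]}$ is an optimal $n$-packing in $\mathbb{R}\mathbf{P}^{d-1}$ whose span is a proper subspace $V\subsetneq\mathbb{R}^d$. Since $n>d$, at least two of the vectors must be ``crowded'' in the following sense: after passing to the lower-dimensional space, there is still genuine correlation among the $\varphi_i$. The key observation is that an optimal packing must have $\mu(\Phi)>0$ when $n>d$ — otherwise the $\varphi_i$ would be mutually orthogonal, which is impossible for $n>d$ unit vectors in $\mathbb{R}^d$ — so $\mu(\Phi)$ is a genuine positive number that we intend to strictly decrease by exploiting the unused dimensions.

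First I would set $k:=\dim V<d$, so there is a unit vector $u$ orthogonal to $V$. The idea is to peel off whichever vector, say $\varphi_n$, is most responsible for the coherence — more precisely, fix a pair $i\neq j$ achieving $|\langle\varphi_i,\varphi_j\rangle|=\mu(\Phi)$ and relabel so that $j=n$ — and rotate $\varphi_n$ slightly toward the orthogonal direction $u$. Concretely, replace $\varphi_n$ by $\varphi_n'=(\cos\theta)\varphi_n+(\sin\theta)u$ for a small $\theta>0$. This is still a unit vector, so $\Phi'=\{\varphi_1,\ldots,\varphi_{n-1},\varphi_n'\}$ is a legitimate $n$-packing. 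For each $i\in[n-1]$ we have $\langle\varphi_i,\varphi_n'\rangle=(\cos\theta)\langle\varphi_i,\varphi_n\rangle$ because $\langle\varphi_i,u\rangle=0$, so every inner product involving the $n$th vector shrinks by the factor $\cos\theta<1$, while all inner products among $\varphi_1,\ldots,\varphi_{n-1}$ are untouched.

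The remaining step is a careful bookkeeping of which entries of the Gram matrix control the coherence. Let $M:=\max_{i,j\in[n-1],i\neq j}|\langle\varphi_i,\varphi_j\rangle|$ be the coherence of the sub-packing omitting $\varphi_n$, and let $m:=\max_{i\in[n-1]}|\langle\varphi_i,\varphi_n\rangle|$, so $\mu(\Phi)=\max\{M,m\}$. After the rotation, $\mu(\Phi')=\max\{M,(\cos\theta)m\}$. If $m>M$, then for $\theta$ small enough we still have $(\cos\theta)m>M$ but $(\cos\theta)m<m=\mu(\Phi)$, a strict decrease, contradicting optimality. The delicate case is $m\le M$: here rotating $\varphi_n$ alone does not help, because the coherence is pinned by a pair inside $[n-1]$. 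To handle this I would instead choose the pair achieving the maximum more cleverly, or iterate the argument: among all vectors, repeatedly rotate the ones involved in coherence-achieving pairs into mutually orthogonal fresh directions. Cleanest is probably to induct or to argue directly that in an optimal packing that fails to span, one can simultaneously nudge \emph{all} offending vectors: since $\operatorname{span}\Phi$ has dimension $k<d$ with $d-k\ge1$ spare dimensions, pick $\varphi_n$ to lie in a coherence-achieving pair, move it off $V$, and observe that the pair $\{i,n\}$ realizing $\mu(\Phi)$ now has strictly smaller correlation while no \emph{new} correlation exceeds $\mu(\Phi)$; if other pairs still tie at $\mu(\Phi)$, they all involve vectors in $V\setminus\{\varphi_n\}$, so we repeat with one of those, peeling off a new orthogonal direction each time. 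Because $d-k\ge1$ but we may need several fresh directions, the truly clean route is to first reduce to the case $k=d-1$ (if $k<d-1$ there is certainly room) and then note that after finitely many such single-vector rotations every coherence-achieving pair has been broken, yielding a packing of strictly smaller coherence — the contradiction. The main obstacle is precisely this combinatorial subtlety: ensuring that rotating one vector does not inadvertently raise some other inner product above the old maximum, which is why the rotation must be into a direction orthogonal to the current span rather than an arbitrary perturbation.
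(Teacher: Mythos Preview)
Your overall strategy---perturb vectors toward an unused direction to strictly reduce the coherence---matches the paper's, but the iteration step has a genuine gap. After you rotate $\varphi_n$ toward $u\in V^\perp$, the span of the new packing may already be all of $\mathbb{R}^d$ (this happens whenever $\dim V=d-1$), so there is no direction ``orthogonal to the current span'' available for the next step, and ``peeling off a new orthogonal direction each time'' is impossible when $V^\perp$ is one-dimensional. Your proposed reduction to $k=d-1$ makes this worse, not better: that is precisely the case with only one spare direction.

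The fix, which is what the paper does, is to choose at each step a direction orthogonal only to the vectors \emph{currently achieving coherence} with the vector being moved, rather than to the whole packing. Concretely, set $N(j;\Phi)=\{i:|\langle\varphi_i,\varphi_j\rangle|=\mu\}$ and pick $v_j$ in the orthogonal complement of $\{\varphi_i\}_{i\in N(j;\Phi)}$; when $\Phi$ fails to span, no subset of $\Phi$ spans, so this complement is always nonempty. Nudging $\varphi_j$ toward $v_j$ strictly shrinks every inner product that was exactly $\mu$ (since $v_j\perp\varphi_i$ for those $i$), while the inner products that were already strictly below $\mu$ stay below $\mu$ by continuity for small enough perturbation. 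Iterating over all $j$ in order empties the contact graph and yields $\mu(\Phi')<\mu(\Phi)$. Your argument can also be salvaged by reusing the \emph{same} $u$ at every step---after earlier rotations, the already-moved vectors have strict slack with everyone, so a small further perturbation cannot push those inner products back up to $\mu$---but this is a continuity argument you did not supply, and it is not the mechanism you described.
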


\begin{proof}
Given an $n$-packing $\Phi=\{\varphi_i\}_{i\in [n]}$ in $\mathbb{R}\mathbf{P}^{d-1}$, fix $\mu=\mu(\Phi)>0$ and define
\[
N(j;\Phi):=\{i\in[n]:|\langle\varphi_i,\varphi_j\rangle|=\mu\}
\]
for each $j\in[n]$.
Suppose there is no $j\in[n]$ such that $\{\varphi_i\}_{i\in N(j;\Phi)}$ forms a spanning set.
Then we may iterate through $j\in[n]$ one at a time, modifying $\varphi_j$ as follows:
Let $v_j$ denote any unit vector in the orthogonal complement of $\{\varphi_i\}_{i\in N(j;\Phi)}$, pick $t$ so that
\[
\psi_j(t)
:=\frac{\varphi_j+t v_j}{\|\varphi_j+t v_j\|}
\]
satisfies $|\langle \varphi_i,\psi_j(t)\rangle|<\mu$ for every $i\in[n]\setminus\{j\}$, and replace $\varphi_j$ with $\psi_j(t)$.
(We verify the existence of such $t$ later.)
When modifying $\Phi$ in this way, we see that $N(j;\Phi)$ becomes the empty set, whereas $j$ is removed from each $N(i;\Phi)$ with $i>j$; in particular, $\Phi$ retains the property that there is no $j\in[n]$ such that $\{\varphi_i\}_{i\in N(j;\Phi)}$ forms a spanning set, and so the iteration is well defined.
At the end of the iteration, $\Phi$ satisfies $\mu(\Phi)<\mu$, meaning the original packing was not optimal.

It remains to verify the existence of $t$.
Since $t\mapsto\psi_j(t)$ is continuous over $t\in(-1,1)$, every sufficiently small $t$ satisfies $|\langle \varphi_i,\psi_j(t)\rangle|<\mu$ for every $i\in[n]$ such that $|\langle \varphi_i,\varphi_j\rangle|<\mu$.
Meanwhile, $|\langle \varphi_i,\varphi_j\rangle|=\mu$ implies $i\in N(j;\Phi)$, and so
\[
|\langle \varphi_i,\psi_j(t)\rangle|
=\frac{|\langle \varphi_i,\varphi_j+t v_j\rangle|}{\|\varphi_j+t v_j\|}
=\frac{|\langle \varphi_i,\varphi_j\rangle|}{\|\varphi_j+t v_j\|}
\leq\frac{\mu}{\sqrt{1+t^2}}
<\mu,
\]
where the inequalities hold provided we select $t\neq0$ so that $t\langle \varphi_j,v_j\rangle\geq0$:
\[
\|\varphi_j+tv_j\|^2
=\|\varphi_j\|^2+2t\langle\varphi_j,v_j\rangle+t^2\|v_j\|^2
=1+2t\langle\varphi_j,v_j\rangle+t^2
\geq 1+t^2.
\qedhere
\]
\end{proof}

We will apply similar reasoning to identify useful combinatorial structure in optimal packings.
We require the following definition:

\begin{definition}
\label{def.secure}
\
\begin{itemize}
\item[(a)]
The \textbf{contact graph} of an $n$-packing $\Phi=\{\varphi_i\}_{i\in[n]}$ is the graph with vertex set $[n]$ and edges $\{i,j\}$ such that $|\langle\varphi_i,\varphi_j\rangle|=\mu(\Phi)$.
\item[(b)]
An $n$-vertex graph $G$ is $d$-\textbf{secure} if for every ordering of the vertices $\{v_i\}_{i\in[n]}$, there exists $j\in[n]$ such that the degree of $v_j$ in $G-\{v_i\}_{i\in[j-1]}$ is at least $d$.
\end{itemize}
\end{definition}

Here, $G-\{v_i\}_{i\in[j-1]}$ denotes the graph with vertices indexed by $[n]\setminus[j-1]$ obtained by removing from $G$ the vertices indexed by $[j-1]$, along with any incident edges.
As an example, $2$-secure graphs are precisely the graphs which contain a cycle.
See Figure~\ref{figure.secure} for an illustration.

\begin{figure}
\begin{center}
\begin{tabular}{ll}
\begin{tikzpicture}[scale=1.7]
\coordinate (1) at (2,3);
\coordinate (2) at (1,2);
\coordinate (3) at (2,2);
\coordinate (4) at (3,2);
\coordinate (5) at (4,2);
\coordinate (6) at (2,1);
\coordinate (7) at (3,1);
\draw [line width=1pt] (2) -- (3) -- (4) -- (5);
\draw [line width=1pt] (1) -- (3) -- (6);
\draw [line width=1pt] (4) -- (7);
\draw [fill=blue!20!white,thick] (1) circle [radius=0.2];
\draw [fill=blue!20!white,thick] (2) circle [radius=0.2];
\draw [fill=blue!20!white,thick] (3) circle [radius=0.2];
\draw [fill=blue!20!white,thick] (4) circle [radius=0.2];
\draw [fill=blue!20!white,thick] (5) circle [radius=0.2];
\draw [fill=blue!20!white,thick] (6) circle [radius=0.2];
\draw [fill=blue!20!white,thick] (7) circle [radius=0.2];
\node at (1){\footnotesize{$a$}};
\node at (2){\footnotesize{$b$}};
\node at (3){\footnotesize{$c$}};
\node at (4){\footnotesize{$d$}};
\node at (5){\footnotesize{$e$}};
\node at (6){\footnotesize{$f$}};
\node at (7){\footnotesize{$g$}};
\end{tikzpicture}
\hspace{0.3in}
&
\hspace{0.3in}
\begin{tikzpicture}[scale=1.7]
\coordinate (1) at (2,3);
\coordinate (2) at (1,2);
\coordinate (3) at (2,2);
\coordinate (4) at (3,2);
\coordinate (5) at (4,2);
\coordinate (6) at (2,1);
\coordinate (7) at (3,1);
\draw [line width=1pt] (2) -- (3) -- (4) -- (5);
\draw [line width=1pt] (1) -- (3) -- (6) -- (7) -- (4);
\draw [fill=blue!20!white,thick] (1) circle [radius=0.2];
\draw [fill=blue!20!white,thick] (2) circle [radius=0.2];
\draw [fill=blue!20!white,thick] (3) circle [radius=0.2];
\draw [fill=blue!20!white,thick] (4) circle [radius=0.2];
\draw [fill=blue!20!white,thick] (5) circle [radius=0.2];
\draw [fill=blue!20!white,thick] (6) circle [radius=0.2];
\draw [fill=blue!20!white,thick] (7) circle [radius=0.2];
\node at (1){\footnotesize{$a$}};
\node at (2){\footnotesize{$b$}};
\node at (3){\footnotesize{$c$}};
\node at (4){\footnotesize{$d$}};
\node at (5){\footnotesize{$e$}};
\node at (6){\footnotesize{$f$}};
\node at (7){\footnotesize{$g$}};
\end{tikzpicture}
\end{tabular}
\end{center}
\caption{
\label{figure.secure}
{\small 
Illustration of Definition~\ref{def.secure}.
For the graph on the left, it is possible to iteratively delete degree-at-most-$1$ vertices one at a time to produce the empty graph.
For example, after deleting vertices $a$, $b$ and $f$, the vertex $c$ has degree $1$ in the remaining graph, and so we may remove it before removing $g$, $d$ and $e$ (in that order).
As such, we say the graph on the left is not $2$-secure.
By contrast, the graph on the right is $2$-secure: While we can remove degree-$1$ vertices $a$, $b$ and $e$, none of the vertices in the remaining $4$-cycle have degree strictly smaller than $2$.
In general, a graph is not $d$-secure if and only if the following holds:
When iteratively deleting vertices of minimum degree, the minimum degree of the remaining graph is always strictly smaller than $d$.
}\normalsize}
\end{figure}
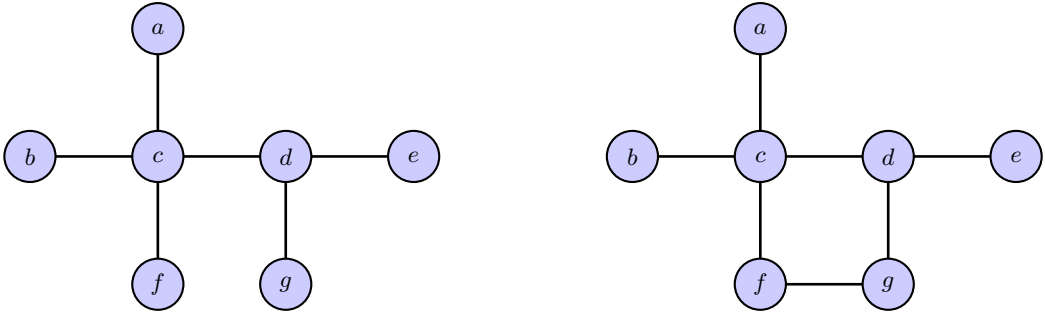

\begin{lemma}
\label{lem.secure}
For $n>d$, the contact graph of an optimal $n$-packing $\Phi$ in $\mathbb{R}\mathbf{P}^{d-1}$ is $d$-secure.
\end{lemma}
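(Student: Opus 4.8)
The plan is to prove the contrapositive by adapting the perturbation argument of Lemma~\ref{lem.spanning}: if the contact graph $G$ of an $n$-packing $\Phi$ is not $d$-secure, then $\Phi$ is not optimal. Set $\mu_0:=\mu(\Phi)$, which is positive because $n>d$ rules out having $n$ orthonormal vectors in $\mathbb{R}^d$. Since $G$ is not $d$-secure, there is an ordering of its vertices witnessing this; relabel the members of $\Phi$ so that the witnessing order is $1,2,\ldots,n$. Then for every $j\in[n]$ the set of contact neighbors of $j$ lying in $\{j+1,\ldots,n\}$, namely
\[
M(j):=\{\,i\in[n]:i>j,\ |\langle\varphi_i,\varphi_j\rangle|=\mu_0\,\},
\]
satisfies $|M(j)|\leq d-1$.

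Now perturb the vectors one at a time, for $j=1,2,\ldots,n$, exactly as in Lemma~\ref{lem.spanning}: on reaching $j$, choose a unit vector $v_j$ in the orthogonal complement of $\{\varphi_i\}_{i\in M(j)}$ --- which is nonempty since $|M(j)|\leq d-1<d$ --- and replace $\varphi_j$ by $\psi_j(t)=(\varphi_j+tv_j)/\|\varphi_j+tv_j\|$ for a small $t\neq0$ with $t\langle\varphi_j,v_j\rangle\geq0$. The two estimates from the proof of Lemma~\ref{lem.spanning} apply verbatim: for $i\in M(j)$ we get $|\langle\varphi_i,\psi_j(t)\rangle|\leq\mu_0/\sqrt{1+t^2}<\mu_0$, and for every other $i\neq j$ a sufficiently small $t$ keeps $|\langle\varphi_i,\psi_j(t)\rangle|<\mu_0$ by continuity, so after step $j$ every inner product involving $\varphi_j$ has modulus strictly below $\mu_0$. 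The key bookkeeping point that makes the iteration valid is that, when step $j$ begins, we have modified only $\varphi_1,\ldots,\varphi_{j-1}$; hence $\langle\varphi_i,\varphi_j\rangle$ retains its original value for each $i>j$, while $|\langle\varphi_i,\varphi_j\rangle|<\mu_0$ for each $i<j$ (this was arranged at step $i$ and has not since been disturbed, because changing $\varphi_k$ for $i<k<j$ affects no inner product involving $\varphi_i$ or $\varphi_j$). Therefore the set of $i\neq j$ with $|\langle\varphi_i,\varphi_j\rangle|=\mu_0$ at step $j$ is precisely $M(j)$, so perturbing along $v_j$ does kill all the tight inner products at $\varphi_j$ without raising any other above $\mu_0$. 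After all $n$ steps, every off-diagonal inner product of the resulting packing has modulus strictly less than $\mu_0$, so its coherence is strictly less than $\mu(\Phi)$, contradicting optimality.

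The main obstacle is precisely why a naive imitation of Lemma~\ref{lem.spanning} fails: one cannot in general move $\varphi_j$ away from all of its contact neighbors simultaneously, since that would require their span to be a proper subspace of $\mathbb{R}^d$, i.e.\ fewer than $d$ of them --- which need not hold. The resolution is to exploit the witnessing vertex order: by processing the vectors in that order, the ``backward'' contact neighbors of $\varphi_j$ have already been pushed strictly below correlation $\mu_0$ and stay there, so only the ``forward'' neighbors remain tight at step $j$, and the failure of $d$-security guarantees there are at most $d-1$ of them --- just few enough to admit an orthogonal perturbation direction.
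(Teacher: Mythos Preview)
Your proof is correct and follows essentially the same approach as the paper: prove the contrapositive by processing the vectors in a witnessing order and, at each step, perturbing the current vector into the orthogonal complement of its (at most $d-1$) remaining contact neighbors exactly as in Lemma~\ref{lem.spanning}. The paper's version is terser, tracking the evolving contact graph $G(\Phi_j)=G-\{k_i\}_{i\in[j]}$ rather than your explicit forward-neighbor sets $M(j)$, but the argument is the same.
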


\begin{proof}
We prove the contrapositive.
Fix $\mu=\mu(\Phi)$, and given any $n$-packing $\Psi=\{\psi_i\}_{i\in[n]}$, let $G(\Psi)$ denote the graph with vertex set $[n]$ and edges $\{i,j\}$ such that $|\langle\psi_i,\psi_j\rangle|=\mu$.
Then $G(\Phi)$ is the contact graph $G$ of $\Phi=\{\varphi_k\}_{k\in[n]}$.
Suppose $G$ is not $d$-secure.
Then there exists an ordering $\{k_i\}_{i\in[n]}$ such that the degree of each $k_j$ in $G-\{k_i\}_{i\in[j-1]}$ is strictly less than $d$.
In particular, $|\langle\varphi_{k_1},\varphi_j\rangle|=\mu$ for at most $d-1$ choices of $j$.
Move $\varphi_{k_1}$ slightly into the orthogonal complement of these $\varphi_j$'s as in the proof of Lemma~\ref{lem.spanning} to produce a new $n$-packing $\Phi_1$.
Then $\mu(\Phi_1)\leq\mu(\Phi)$ and since $\mu(\Phi)>0$, we have $G(\Phi_1)=G-k_1$.
Proceeding iteratively produces $n$-packings $\Phi_2,\ldots,\Phi_{n-1}$ such that $\mu(\Phi_{n-1})\leq\cdots\leq\mu(\Phi_1)\leq\mu(\Phi)$.
Furthermore, $G(\Phi_{n-1})=G-\{k_i\}_{i\in[n-1]}$ has no edges, and so $\mu(\Phi_{n-1})<\mu(\Phi)$.
As such, $\Phi$ is not an optimal $n$-packing in $\mathbb{R}\mathbf{P}^{d-1}$.
\end{proof}

Lemma~\ref{lem.secure} is particularly telling when $n$ is small relative to $d$.
Since optimal $n$-packings in $\mathbb{R}\mathbf{P}^{d-1}$ are well understood for $n\leq d+1$, we focus on the case where $n=d+2$.
While this case was already solved for $d=2,3$ by Benedetto and Kolesar in~\cite{BenedettoK:06}, we leverage real algebraic geometry to devise a unified proof technique that solves the $d=4$ case as well.
For $d=2$, the optimal packing is the union of identity and Hadamard bases, whose coherence is $1/\sqrt{2}$.
For $d=3$, the optimal coherence is $1/\sqrt{5}$.
Here, one first selects $6$ antipodal representatives from the $12$-vertex iscosahedron to obtain an optimal $6$-packing in $\mathbb{R}\mathbf{P}^2$ before removing any vector.
Judging by Sloane's database of packings in Grassmannian spaces~\cite{Sloane:online}, this phenomenon of optimal packings arising from larger optimal packings appears to occur frequently, and we will study this further in Section~\ref{sec.local}.
For $d=4$, Sloane's database suggests that one of the optimal packings $\Phi$ satisfies
\begin{equation}
\label{eq.opt 6 in 4}
\Phi^\top\Phi=
\left[
\begin{array}{rrrrrr}
1&\mu&\mu&-\mu&-\mu&\phantom{-}\mu\\
\mu&1&-\mu&-\mu&\mu&\mu\\
\mu&-\mu&1&-\mu&\mu&\mu\\
-\mu&-\mu&-\mu&1&-\mu&\mu\\
-\mu&\mu&\mu&-\mu&1&\mu\\
\mu&\mu&\mu&\mu&\mu&1
\end{array}
\right],
\qquad
\mu=\frac{1}{3}.
\end{equation}
This packing is equiangular, but unlike the $d=3$ case, it does not appear to be obtained from a larger optimal packing.

We will prove the optimality of \eqref{eq.opt 6 in 4} by reducing to a handful of subproblems that we can solve with the help of a computer algebra system.
First, Lemma~\ref{lem.secure} forces many of the Gram matrix entries to be $\pm\mu(\Phi)$.
To see which entries necessarily have this form, we first identify the \textbf{minimal $d$-secure graphs} of order $d+2$, that is, the $d$-secure graphs of order $d+2$ with the property that no proper subgraph is $d$-secure.
Since every $d$-secure graph contains a minimal $d$-secure graph as a subgraph, this will establish which Gram matrix entries are forced.
To this end, we first identify some distinguishing properties of minimal $d$-secure graphs:

\begin{lemma}
\label{lem.minimal properties}
Every minimal $d$-secure graph contains exactly one nontrivial component.
In this component, the minimum degree is at least $d$.
Any other component amounts to an isolated vertex.
\end{lemma}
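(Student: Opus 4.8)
The plan is to reformulate $d$-securedness via a suitable notion of core and then let edge-minimality do all of the work. By the reformulation recorded in the caption of Figure~\ref{figure.secure}, a graph $H$ fails to be $d$-secure exactly when iteratively deleting a vertex of minimum degree exhausts $H$ while the minimum degree stays below $d$ throughout; equivalently (and immediately from the definition), $H$ is $d$-secure if and only if $H$ has a subgraph of minimum degree at least $d$. Since the union of two subgraphs of minimum degree at least $d$ again has minimum degree at least $d$, there is a well-defined maximal such subgraph, which we call the \emph{$d$-core} of $H$; it may be obtained by iteratively deleting vertices of degree less than $d$ until none remain, so it is an induced subgraph, and $H$ is $d$-secure if and only if its $d$-core is nonempty.

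Now let $G$ be a minimal $d$-secure graph, so that $G$ is $d$-secure but removing any single edge destroys this property, and let $C$ denote the $d$-core of $G$. Then $C$ is nonempty, equals the induced subgraph $G[V(C)]$, and has minimum degree at least $d$. First I claim that every edge of $G$ has both endpoints in $V(C)$. Indeed, if some edge $e$ had an endpoint outside $V(C)$, then $e\notin E(C)$, so $C$ would still be a subgraph of $G-e$ of minimum degree at least $d$; hence $G-e$ would be $d$-secure, contradicting minimality. Consequently every vertex of $V(G)\setminus V(C)$ is isolated in $G$, and since $C$ is induced, $G$ is the disjoint union of $C$ together with these isolated vertices.

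It remains to prove that $C$ is connected. If it were not, we could write $C=C_1\sqcup C_2$ where $C_1$ and $C_2$ are nonempty unions of components of $C$; each $C_i$ then inherits minimum degree at least $d$, so, using $d\geq1$, the subgraph $C_1$ contains an edge $e$. Removing $e$ leaves $C_2$ untouched as a subgraph of $G-e$ of minimum degree at least $d$, so $G-e$ is again $d$-secure, contradicting minimality. Therefore $C$ is connected; since $d\geq1$ it contains no isolated vertex, so $C$ is the unique nontrivial component of $G$, its minimum degree is at least $d$, and every other component of $G$ is one of the isolated vertices identified above.

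Apart from routine checks, the step that needs care is the core characterization of $d$-securedness and the well-definedness of $C$; once those are established, each conclusion of the lemma follows by deleting a single, judiciously chosen edge that leaves some minimum-degree-$\geq d$ subgraph intact and then invoking edge-minimality.
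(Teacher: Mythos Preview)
Your proof is correct. It differs from the paper's in organization and in the explicit machinery it invokes. The paper argues each clause of the lemma directly by a one-line contrapositive: if a $d$-secure graph has two nontrivial components, delete all edges of one of them to obtain a proper $d$-secure spanning subgraph; if the minimum nonzero degree is below $d$, delete all edges incident to such a vertex to the same effect. You instead introduce the $d$-core, prove the equivalence ``$d$-secure $\Leftrightarrow$ nonempty $d$-core,'' and then let edge-minimality force $G$ to equal its $d$-core together with isolated vertices, finishing with a connectivity argument. Your route is longer but more structural: it makes explicit the object (the $d$-core) that the paper's proof uses only implicitly when asserting that the post-deletion graph remains $d$-secure, and it proceeds by removing a \emph{single} edge at each step rather than a batch. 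The paper's approach buys brevity; yours buys a cleaner conceptual picture and a reusable characterization. Both rely on $d\geq 1$, which is harmless in the paper's context (Lemma~\ref{lem.two minimals} assumes $d\geq 2$).
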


\begin{proof}
Any $d$-secure graph with multiple nontrivial components is not minimal, since one may remove the edges from one component to get a proper $d$-secure subgraph.
Given a $d$-secure graph with minimum nonzero degree less than $d$, one may remove the edges incident to the minimum-degree vertex to obtain a proper $d$-secure subgraph.
\end{proof}

\begin{lemma}
\label{lem.two minimals}
Fix $d\geq2$.
There are two minimal $d$-secure graphs of order $d+2$:
the complete graph of order $d+1$ union an isolated vertex, and the graph complement of a maximum matching.
\end{lemma}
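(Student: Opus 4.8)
The plan is to translate the combinatorial condition of Definition~\ref{def.secure}(b) into a statement about minimum degrees, enumerate the resulting candidate graphs, and then verify minimality by hand. The starting observation is that a graph $G$ is $d$-secure if and only if it has a vertex subset $S$ for which the induced subgraph $G[S]$ has minimum degree at least $d$ (equivalently, $G$ is not $(d-1)$-degenerate). Indeed, if such an $S$ exists then in any vertex ordering the first vertex $v_j$ lying in $S$ satisfies $\deg_{G-\{v_i\}_{i\in[j-1]}}(v_j)\geq\deg_{G[S]}(v_j)\geq d$; conversely, if no such $S$ exists then every induced subgraph of $G$ has a vertex of degree less than $d$, and deleting such a vertex repeatedly yields an ordering witnessing that $G$ is not $d$-secure. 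This is essentially the greedy characterization stated in the caption of Figure~\ref{figure.secure}.

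Since $\delta(G[S])\geq d$ forces $|S|\geq d+1$, and a graph of order $d+2$ has $|S|\in\{d+1,d+2\}$, there are two cases. If $|S|=d+1$, then $G[S]$ is a $(d+1)$-vertex graph of minimum degree at least $d$, hence $G[S]=K_{d+1}$; so $G$ contains a spanning copy of $K_{d+1}\cup K_1$, and minimality forces $G$ to have no edge incident to the remaining vertex, i.e.\ $G=K_{d+1}\cup K_1$. That this graph is minimal $d$-secure is clear: deleting any edge leaves a graph whose only nontrivial component is $K_{d+1}$ minus an edge, which contains no $K_{d+1}$ and has a vertex of degree $0$, hence is not $d$-secure. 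If instead $|S|=d+2$, then $\delta(G)\geq d$, so $\overline{G}$ has maximum degree at most $1$; that is, $\overline{G}$ is a matching $M$ (together with some isolated vertices), and conversely $\overline{M}$ is always $d$-secure, witnessed by $S=V$.

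It then remains to decide which matchings $M$ give a minimal $\overline{M}$. If $M$ has at most one edge then $\overline{M}$ contains $K_{d+1}$ and so properly contains the $d$-secure graph of the previous case, so $M$ has at least two edges. If $M$ leaves two vertices $u,v$ unmatched then $\{u,v\}$ is an edge of $\overline{M}$ whose deletion yields $\overline{M\cup\{u,v\}}$, still the complement of a matching and hence still $d$-secure, so $M$ must be a maximum matching of $K_{d+2}$, necessarily with $\lfloor(d+2)/2\rfloor\geq2$ edges since $d\geq2$. Finally I would show that for a maximum matching $M$ the complement $\overline{M}$ is minimal: deleting an edge $e=\{u,v\}$ of $\overline{M}$, at least one endpoint, say $u$, is $M$-matched (to some $u'$), so in $M\cup\{e\}$ the vertex $u$ has degree $2$; thus $\overline{M}-e$ has a vertex of degree $d-1$, which kills the $S=V$ witness, and a short independence-number count in the sparse graph $M\cup\{e\}$ (its component through $u$ is a path on $3$ or $4$ vertices, the rest being a matching) gives $\alpha(M\cup\{e\})=d+2-|E(M)|\leq d$, which rules out every $(d+1)$-vertex clique in $\overline{M}-e$. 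Hence $\overline{M}-e$ is not $d$-secure for any $e$, so $\overline{M}$ is minimal. Since $K_{d+1}\cup K_1$ has an isolated vertex whereas $\overline{M}$ has minimum degree $d\geq2$, the two graphs are non-isomorphic, so there are exactly two minimal $d$-secure graphs of order $d+2$.

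The only step requiring genuine care is the last one, namely showing that deleting a single edge from $\overline{M}$ destroys $d$-security, since one must rule out witnesses of both possible sizes: the full-vertex-set witness (easy, one degree drops to $d-1$) and every $(d+1)$-vertex clique (a small computation of the independence number of $M\cup\{e\}$, which I would organize according to whether the second endpoint of the deleted edge is $M$-matched). Everything else is routine.
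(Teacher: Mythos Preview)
Your argument is correct and follows essentially the same route as the paper: both proofs reduce to the observation that a $d$-secure graph on $d+2$ vertices must contain either a $K_{d+1}$ or have global minimum degree $\geq d$ (you phrase this via the degeneracy/witness characterization, the paper via Lemma~\ref{lem.minimal properties} on component structure), and both then pass to the complement to see a matching in the second case. The main difference is that you verify minimality of the two candidates explicitly---checking that deleting any edge from $\overline{M}$ kills both possible witness sizes via the independence-number count---whereas the paper leaves this implicit, simply noting that the complement of a maximum matching is $d$-secure and hence is the unique minimal graph among the $\overline{M}$'s. Your extra care here is not wasted: the paper's proof is terse on exactly this point, and your argument makes clear why neither candidate contains the other.
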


Here, an isolated vertex is a vertex of degree zero (i.e., it is incident to zero edges, and therefore adjacent to zero vertices), whereas a maximum matching is a graph of $n$ vertices and $\lfloor n/2\rfloor$ edges such that all but possibly one vertex is adjacent to (or ``matched with'') exactly one other vertex.

\begin{proof}[Proof of Lemma~\ref{lem.two minimals}]
Lemma~\ref{lem.minimal properties} implies that every minimal $d$-secure graph $G$ must contain a component of order $\geq d+1$.
If $G$ contains a component of order $d+1$, then it must be complete in order to have minimum degree $\geq d$, and the remaining vertex must be isolated by Lemma~\ref{lem.minimal properties}.
Otherwise, $G$ is connected.
In this case, the complement of $G$ is necessarily a matching, since otherwise two edges in the complement would share a vertex, forcing that vertex to have degree less than $d$ in $G$.
Since the complement of a maximum matching is $d$-secure, this gives the only other minimal $d$-secure graph of order $d+2$.
\end{proof}

Lemma~\ref{lem.two minimals} offers substantial information about the Gram matrix of every optimal $(d+2)$-packing $\Phi$ in $\mathbb{R}\mathbf{P}^{d-1}$.
For example, when $d=2$, we may permute the columns of $\Phi$ so that
\begin{equation}
\label{eq.gram forms}
\Phi^\top\Phi=
\left[
\begin{array}{rrrr}
1&\pm\mu&\pm\mu&\phantom{\pm}x_1\\
\pm\mu&1&\pm\mu&x_2\\
\pm\mu&\pm\mu&1&x_3\\
x_1&x_2&x_3&1
\end{array}
\right]
~
\text{or}
~
\left[
\begin{array}{rrrr}
1&x_1&\pm\mu&\pm\mu\\
x_1&1&\pm\mu&\pm\mu\\
\pm\mu&\pm\mu&1&x_2\\
\pm\mu&\pm\mu&x_2&1
\end{array}
\right]
~
\text{for some}
~
x_1,x_2,x_3\in[-\mu,\mu].
\end{equation}
Indeed, in the first case, the contact graph contains the complete graph of order~$3$ union an isolated vertex, and in the second case, it contains the graph complement of a matching of size~$2$.
To demonstrate the optimality of the 4-packing of coherence $1/\sqrt{2}$ (whose Gram matrix exhibits the second form with $x_1=x_2=0$), it remains to prove that such Gram matrices do not exist for $\mu<1/\sqrt{2}$, regardless of the sign pattern.

At this point, we have a general proof technique for demonstrating the optimality of $(d+2)$-packings in $\mathbb{R}\mathbf{P}^{d-1}$: Apply Lemmas~\ref{lem.secure} and~\ref{lem.two minimals} to establish that the Gram matrix has one of two forms (as in \eqref{eq.gram forms}), and then for each sign pattern, run CAD to find a lower bound on $\mu$.
In practice, CAD is the runtime bottleneck, so we avoid this blackbox whenever possible.
To this end, we discuss two different speedups: (i) analyzing the first Gram matrix form without CAD, and (ii) identifying equivalent sign patterns to reduce the number of CAD queries.

For (i), consider the $(d+1)\times(d+1)$ submatrix $H$ of the Gram matrix obtained by removing the isolated vertex in the contact graph.
Then $H$ is the Gram matrix of $d+1$ equiangular vectors in $\mathbb{R}^d$.
Notice that conjugating $H$ with any signed permutation does not change whether $\mu$ satisfies \eqref{eq.quantifier}.
Writing $H=I+\mu S$, then $S$ captures the sign pattern (known as the \textbf{Seidel adjacency matrix} of $H$), and we say two Seidel adjacency matrices are \textbf{switching equivalent} if one can be obtained from the other by conjugating with a signed permutation.
For each $n\leq 10$, \cite{BussemakerMS:81} has determined the number $N(n)$ of switching equivalence classes of Seidel adjacency matrices of order $n$ (explicitly, $N(n)=2,3,7$ for $n=3,4,5$ respectively).
Representatives of these classes are easily obtained by drawing $S$ at random, and for $n\leq 5$, the minimum eigenvalue distinguishes the classes.
Since $H=I+\mu S$ is positive semidefinite and rank deficient, we have $\mu=-1/\lambda_\mathrm{min}(S)$.
Furthermore, $\mu$ satisfies \eqref{eq.quantifier} only if $\mu=\mu(\Phi)$ for some $(d+2)$-packing $\Phi$, and so it satisfies the Welch bound $\mu\geq\sqrt{2/(d(d+1))}$.
By exhausting through switching class representatives $S$ for each $d=2,3,4$, one observes that $\mu=-1/\lambda_\mathrm{min}(S)$ satisfies the Welch bound only if it also satisfies $\mu\geq\mu(\Psi)$, where $\Psi$ is the putatively optimal $(d+2)$-packing in $\mathbb{R}\mathbf{P}^{d-1}$.
As such, we need only consider Gram matrices of the second form, namely, those whose contact graphs contain the graph complement of a maximum maching.

For (ii), we extend switching equivalence to general matrices:
We say $S$ and $S'$ are switching equivalent if (a) $S_{ij}=0$ precisely when $S'_{ij}=0$, and furthermore (b) one can be obtained from the other by conjugating with a signed permutation.
Let $G$ be a Gram matrix of the second form and write $G=B+\mu S$, where $B$ is a block-diagonal matrix with $1$'s on the diagonal and $x_i$'s and $0$'s on the off-diagonal, and all the entries of $S$ lie in $\{0,\pm1\}$.
As before, replacing $S$ with a switching equivalent $S'$ does not change whether $\mu$ satisfies \eqref{eq.quantifier}, and so it suffices to restrict our CAD queries to switching class representatives.
In the case where $d=4$, the number of representatives is 14; see~\cite{FickusJM:17a} for a Mathematica script that iterates through the corresponding CAD queries in about 30 seconds.
This gives the main result of this section:

\begin{theorem}
\label{thm.6in4}
The $6$-packings in $\mathbb{R}\mathbf{P}^3$ that satisty \eqref{eq.opt 6 in 4} are optimal.
\end{theorem}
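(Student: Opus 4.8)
The plan is to use the combinatorics developed above to collapse the problem into a short list of low-dimensional feasibility questions, all but a handful of which can be settled by a spectral argument, leaving only a small number of cylindrical algebraic decomposition (CAD) queries. First I would fix, by compactness, an optimal $6$-packing $\Phi$ in $\mathbb{R}\mathbf{P}^3$ and set $\mu=\mu(\Phi)$; since no six unit vectors in $\mathbb{R}^4$ are pairwise orthogonal, $\mu>0$. By Lemma~\ref{lem.secure} the contact graph of $\Phi$ is $4$-secure, hence contains a minimal $4$-secure graph as a spanning subgraph, and as it has $6=d+2$ vertices, Lemma~\ref{lem.two minimals} forces this subgraph to be either $K_5$ together with an isolated vertex or the complement of a perfect matching on the six vertices. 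Permuting the columns of $\Phi$ changes neither $\mu(\Phi)$ nor optimality, so after such a permutation the Gram matrix $G=\Phi^\top\Phi$ has one of the two block forms exactly analogous to~\eqref{eq.gram forms}. It then suffices to prove $\mu\geq\tfrac{1}{3}$ in both cases: the matrix in~\eqref{eq.opt 6 in 4} is positive semidefinite of rank $4$ (an easy direct check), hence the Gram matrix of a $6$-packing of coherence $\tfrac{1}{3}$, so any packing realizing~\eqref{eq.opt 6 in 4} attains the minimum coherence and is optimal.

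For the first form I would invoke speedup~(i): the $5\times5$ principal submatrix $H$ indexed by the $K_5$ is the Gram matrix of five equiangular unit vectors in $\mathbb{R}^4$, so $H=I+\mu S$ for some order-$5$ Seidel adjacency matrix $S$; since $H$ inherits positive semidefiniteness and rank at most $4$ from $G$, it is rank deficient, whence $\mu=-1/\lambda_\mathrm{min}(S)$. Running through the $N(5)=7$ switching-equivalence classes of order-$5$ Seidel matrices — which, as recalled above, are separated by their minimum eigenvalues — one checks that $-1/\lambda_\mathrm{min}(S)$ is, in every class, either strictly below the Welch bound $\sqrt{2/(4\cdot5)}=1/\sqrt{10}$ for $(d,n)=(4,6)$ — a contradiction, since $\mu=-1/\lambda_\mathrm{min}(S)$ must obey that bound — or at least $\tfrac{1}{3}$. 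Hence $\mu\geq\tfrac{1}{3}$ whenever $G$ has the first form, with no CAD involved.

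For the second form I would write $G=B+\mu S$, where $B$ is block-diagonal with ones on the diagonal and the three undetermined inner products $x_1,x_2,x_3$ off the diagonal, and $S$ has entries in $\{0,\pm1\}$ with zeros exactly at the three matched pairs. By speedup~(ii), conjugating $S$ by a signed permutation that fixes this zero pattern does not change whether a given $\mu$ satisfies the feasibility statement~\eqref{eq.quantifier} — whose polynomial data encode $G\succeq0$ via nonnegativity of the principal minors, $\operatorname{rank}(G)\leq4$ via the vanishing of all $5\times5$ minors, and $|x_i|\leq\mu$ — so it is enough to run CAD on one representative of each of the $14$ switching classes that occur for $d=4$, in each case adjoining the constraint $\mu<\tfrac{1}{3}$. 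The expectation is that CAD returns the empty set for all fourteen queries (the Mathematica script of~\cite{FickusJM:17a} performs these in about $30$ seconds), certifying $\mu\geq\tfrac{1}{3}$ for the second form as well. Combined with the first-form analysis and the observation that~\eqref{eq.opt 6 in 4} is of the second form with $x_1,x_2,x_3\in\{\pm\tfrac{1}{3}\}$, this completes the proof.

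The main obstacle is the runtime of the second-form CAD queries: quantifier elimination over the reals has notoriously poor worst-case complexity, and feeding CAD the unreduced problem — retaining both Gram matrix forms, or enumerating all $2^{12}$ sign patterns of the free $\pm\mu$ entries — would be infeasible. Speedups~(i) and~(ii) are precisely what bring the computation within reach, so the subtle part of the argument is the bookkeeping they demand: confirming that the seven Seidel classes and the fourteen second-form switching classes genuinely exhaust all possibilities, and that each CAD call is set up with the correct polynomial constraints encoding positive semidefiniteness, the rank bound, and the coherence.
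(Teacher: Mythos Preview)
Your proposal is correct and follows essentially the same approach as the paper: reduce via Lemmas~\ref{lem.secure} and~\ref{lem.two minimals} to the two Gram matrix forms, dispatch the $K_5\cup\{\text{pt}\}$ form by the spectral argument over the seven order-$5$ Seidel switching classes, and handle the matching-complement form by CAD over the fourteen switching classes, exactly as in speedups~(i) and~(ii). Your write-up even anticipates the paper's reliance on the script in~\cite{FickusJM:17a}.
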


Since the published Benedetto--Kolesar proof of the $d=3$ case omits certain details for the sake of presentation, we also provide a Mathematica-assisted proof of this case in~\cite{FickusJM:17b}.
We suspect that our methods generalize to larger $d$, but doing so apparently requires either additional computational resources or clever quantifier elimination (e.g., permuting the variables, relaxing the polynomial constraints, or applying a specialized alternative to CAD).
In anticipation of these developments, we offer the following analytic Gram matrices for the $d=5,6$ cases from Sloane's database~\cite{Sloane:online}:
\begin{equation}
\label{eq.conjectured grams}
G_5=
\left[
\footnotesize{
\begin{array}{rrrrrrr}
     1  &  -a   &  a  &  -a  &   \phantom{-}a  &  -a  &   a\\
    -a  &   1   &  a  &   a  &   a  &  -a  &   a\\
     a  &   a   &  1  &  -a  &   a  &   a  &  -a\\
    -a  &   a   & -a  &   1  &   a  &  -a  &  -a\\
     a  &   a   &  a  &   a  &   1  &   a   &  a\\
    -a  &  -a   &  a  &  -a  &   a &    1  &  -a\\
     a  &   a   & -a  &  -a  &   a &   -a  &   1
     \end{array}
     }
     \right],
\quad
     G_6=
     \left[
     \footnotesize{
\begin{array}{rrrrrrrr}
    1  &   b  &   b  &  -b  &   b  &   c  &   b  &  -b\\
     b  &  1  &  -b  &  -b  &  -b  &  -b  &  -c  &  -b\\
     b  &  -b  &  1  &  -b  &  -b  &  -b  &  -b  &  -b\\
    -b  &  -b  &  -b  &  1  &   b  &  -b  &   b  &  -b\\
     b  &  -b  &  -b  &   b  &  1 &   -b  &  -b  &   b\\
     c  &  -b  &  -b  &  -b  &  -b  &  1  &   b  &  -b\\
     b  &  -c  &  -b  &   b  &  -b  &   b  &  1  &   b\\
    -b  &  -b  &  -b  &  -b  &   b  &  -b  &   b  &  1\\
         \end{array}
         }
     \right],
\end{equation}
where $a>0$ is the second smallest root of $x^3 - 9x^2 - x + 1$, $b>0$ is the second smallest root of 
\begin{equation}
\label{eq.6by8coherence}
106x^6 - 264x^5 - 53x^4 + 84x^3 + 20x^2 - 4x - 1,
\end{equation}
and $c\in(0,b)$ is the fourth smallest root of 
\[
53x^6+484x^5+814x^4-860x^3-347x^2+352x-32.
\]
Indeed, the Gram matrices in \eqref{eq.conjectured grams} match the form of Sloane's numerical constructions, and so the exact value for $a$ is $-1/\lambda_\mathrm{min}(S)$, where $S$ is the corresponding Seidel adjacency matrix, whereas $b$ and $c$ can be obtained by passing to CAD.
Due to the simplicity of the former case, Table~\ref{table} provides perfected versions of all equiangular putatively optimal packings in Sloane's database~\cite{Sloane:online}, excluding the equiangular packings that are subpackings of larger equiangular packings.
We note that to date, the squared coherence of every known optimal packing in real projective space is rational.
In light of this, the above putatively optimal packings are striking---the coherence of $G_6$ is not even expressible by radicals!
This motivates the following guarantee on the field structure of optimal coherence:

\begin{theorem}
The coherence of an optimal packing in real projective space is algebraic.
\end{theorem}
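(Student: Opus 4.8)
The plan is to exhibit the optimal coherence as the infimum of a semialgebraic subset of $\mathbb{R}$ defined over the rationals, and then appeal to the fact that such infima are real algebraic numbers. Fix $d$ and $n$, and consider
\[
\mathcal{F}:=\Big\{(\Phi,\mu)\in\mathbb{R}^{d\times n}\times\mathbb{R}:\ \sum_{k\in[d]}\Phi_{ki}^2=1\ \ \forall i\in[n],\quad\Big(\sum_{k\in[d]}\Phi_{ki}\Phi_{kj}\Big)^2\leq\mu^2\ \ \forall i\neq j\Big\}.
\]
This is a basic closed semialgebraic set cut out by polynomials with integer coefficients: the unit-norm conditions are polynomial equalities, and squaring each correlation $|\langle\varphi_i,\varphi_j\rangle|$ converts the coherence constraint into a polynomial inequality. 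Moreover $(\Phi,\mu)\in\mathcal{F}$ if and only if $\Phi$ is an $n$-packing in $\mathbb{R}\mathbf{P}^{d-1}$ with $\mu(\Phi)\leq\mu$.

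Next I would apply the Tarski--Seidenberg theorem to conclude that the image $T$ of $\mathcal{F}$ under the coordinate projection $(\Phi,\mu)\mapsto\mu$ is a semialgebraic subset of $\mathbb{R}$, and, by effective quantifier elimination, is a finite union of points and open intervals whose endpoints are roots of finitely many univariate polynomials with rational coefficients (together with $\pm\infty$). In particular, every finite endpoint of $T$ is a real algebraic number. It therefore remains only to identify $\inf T$ with the optimal coherence $\mu^\star:=\min_\Phi\mu(\Phi)$, which is attained by the compactness remark following the definition of optimality. But $\mu\in T$ precisely when some $n$-packing has coherence at most $\mu$, i.e.\ precisely when $\mu\geq\mu^\star$; hence $T=[\mu^\star,\infty)$, and $\mu^\star$ is the (finite) left endpoint of $T$, so it is algebraic.

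This argument is entirely ``soft''---it uses nothing about packings beyond the fact that the feasibility region is a rational semialgebraic set, and it applies verbatim to $n$-packings in $\mathbb{C}\mathbf{P}^{d-1}$ after viewing $\mathbb{C}^d$ as $\mathbb{R}^{2d}$. Consequently there is no serious obstacle; the only points deserving care are bookkeeping. One must eliminate the absolute values from the coherence constraint by squaring (so that $\mathcal{F}$ is genuinely semialgebraic), and one must check that $T$ is nonempty and bounded below so that $\inf T$ is a genuine finite endpoint rather than $-\infty$; for this it suffices to note that $\mu=1$ lies in $T$ (any correlation is at most $1$) and that $T$ is bounded below by $0$, or, more sharply, by the Welch bound \eqref{eq.welch bound}. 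Alternatively, one may phrase the whole proof as a single quantifier-elimination statement: the predicate $\exists\,\Phi\ [\,\Phi\text{ is an }n\text{-packing and }\mu(\Phi)\leq\mu\,]$ is equivalent over $\mathbb{R}$ to a Boolean combination of polynomial (in)equalities in the single variable $\mu$, and $\mu^\star$ is the least $\mu$ satisfying it.
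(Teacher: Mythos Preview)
Your approach is essentially the paper's own: encode the feasibility region as a rational semialgebraic set, project to the coherence coordinate via Tarski--Seidenberg, and read off the optimal coherence as an endpoint of the resulting one-dimensional semialgebraic set. The only cosmetic difference is that you parameterize by the frame matrix $\Phi\in\mathbb{R}^{d\times n}$, whereas the paper parameterizes by the Gram matrix $G$ (with the PSD and rank constraints implemented via minors); your choice is arguably cleaner since it sidesteps those constraints entirely.

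There is, however, a small bookkeeping slip in exactly the place you flagged as needing care. After squaring, the constraint $\big(\sum_k\Phi_{ki}\Phi_{kj}\big)^2\leq\mu^2$ is equivalent to $|\langle\varphi_i,\varphi_j\rangle|\leq|\mu|$, not $|\langle\varphi_i,\varphi_j\rangle|\leq\mu$. Consequently your assertion that ``$(\Phi,\mu)\in\mathcal{F}$ if and only if \ldots\ $\mu(\Phi)\leq\mu$'' is false for negative $\mu$, and the projection is actually $T=(-\infty,-\mu^\star]\cup[\mu^\star,\infty)$, so $\inf T=-\infty$ and your claim that $T$ is bounded below by $0$ fails. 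The fix is trivial: either append the linear constraint $\mu\geq 0$ to the definition of $\mathcal{F}$, or replace the squared inequality by the pair $-\mu\leq\sum_k\Phi_{ki}\Phi_{kj}\leq\mu$ (which is what the paper does with the Gram entries, and which forces $\mu\geq 0$ automatically once $n\geq 2$). With either adjustment your argument goes through verbatim.
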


\begin{proof}
Let $S$ denote the semialgebraic set of Gram matrices of $n$-packings in $\mathbb{R}\mathbf{P}^{d-1}$:
\[
S
=\Big\{G\in\mathbb{R}^{n\times n}:G^\top=G,\operatorname{diag}(G)=\mathbf{1}, 
G\succeq0,
\operatorname{rank}(G)\leq d
\Big\}.
\]
Let $\Phi$ be an optimal $n$-packing in $\mathbb{R}\mathbf{P}^{d-1}$, and consider the set
\[
T=\Big\{(G,x):G\in S,-x\leq G_{ij}\leq x ~ \forall i,j\in[n],i\neq j\Big\}.
\]
Then $\mu(\Phi)=\min_{(G,x)\in T}x$.
Since $T$ is a semialgebraic set defined by polynomials with rational coefficients, then the Tarski--Seidenberg theorem (specifically, Theorem~1.4.2 in~\cite{BochnakCR:98}) gives that the set $\operatorname{proj}_xT\subseteq\mathbb{R}$ of all $x$ for which there exists $G\in S$ such that $(G,x)\in T$ is also a semialgebraic set defined by polynomials with rational coefficients.
As such, $\mu(\Phi)=\min(\operatorname{proj}_xT)$ is algebraic.
\end{proof}

\section{Approximately optimal packings}

At this point, the reader may appreciate the difficulty involved in constructing provably optimal packings in real projective space.
In this section, we offer a recipe to construct packings whose coherence is within a constant factor of optimal provided $n$ lies in the Gerzon range~\eqref{eq.gerzon}.
We begin with a versatile packing in complex projective space:

\begin{theorem}
\label{thm.weil}
Let $\psi$ be a nontrivial additive character of $\mathbb{F}_q$.
For each $f\in\mathbb{F}_q[x]$, define
\[
\varphi_f(x)
=\frac{1}{\sqrt{q}}\psi(f(x))
\qquad
\forall x\in\mathbb{F}_q.
\]
For each $r<\operatorname{char}(\mathbb{F}_q)$, let $S(r)$ denote the $f$'s such that $f(0)=0$ and $\mathrm{deg}(f)\leq r$.
Then
\begin{itemize}
\item[(a)]
$S(1)\subseteq S(2)\subseteq\cdots\subseteq S(\operatorname{char}(\mathbb{F}_q)-1)$ with each $\{\varphi_f\}_{f\in S(r)}$ formed by $q^r$ vectors in $\mathbb{C}^q$.
\item[(b)]
$\{\varphi_f\}_{f\in S(1)}$ is formed by the additive characters of $\mathbb{F}_q$.
\item[(c)]
$\{\varphi_f\}_{f\in S(2)}$ gives $q$ bases in $\mathbb{C}^q$ that, together with the identity basis, are mutually unbiased.
\item[(d)]
$|\langle \varphi_f,\varphi_g\rangle|\leq(r-1)/\sqrt{q}$ for all $f,g\in S(r)$ with $f\neq g$.
\end{itemize}
\end{theorem}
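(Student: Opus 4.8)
The strategy is to observe that every inner product between two of these vectors is, up to the factor $1/q$, a complete exponential sum over $\mathbb{F}_q$, and then to invoke the appropriate classical bounds: Weil's character-sum estimate in general, and the evaluation of quadratic Gauss sums in the degree-two case. As a preliminary, note that each $\varphi_f$ is a unit vector, since $\psi$ takes values on the unit circle and hence $\|\varphi_f\|^2=\frac1q\sum_{x\in\mathbb{F}_q}|\psi(f(x))|^2=1$. Part~(a) is then bookkeeping: the inclusions $S(r)\subseteq S(r+1)$ are immediate, and the polynomials with $f(0)=0$ and $\deg f\le r$ are precisely $a_1x+\cdots+a_rx^r$ with $a_1,\dots,a_r\in\mathbb{F}_q$, of which there are $q^r$; since $r<\operatorname{char}(\mathbb{F}_q)\le q$, distinct such polynomials represent distinct functions on $\mathbb{F}_q$, and the computation in part~(d) below shows they yield distinct vectors (their correlation is strictly below $1$). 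For part~(b), $S(1)$ is exactly the set of maps $x\mapsto ax$ for $a\in\mathbb{F}_q$, and $a\mapsto(x\mapsto\psi(ax))$ is the standard parametrization of the character group of $(\mathbb{F}_q,+)$; thus $\{\varphi_f\}_{f\in S(1)}$ is the normalized list of all additive characters of $\mathbb{F}_q$, which form an orthonormal basis of $\mathbb{C}^q$.

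Part~(d) carries the main weight. Given $f\ne g$ in $S(r)$, set $h=f-g$, so that $h\ne0$ and $h(0)=0$, hence $1\le\deg h\le r<\operatorname{char}(\mathbb{F}_q)$; in particular $\deg h$ is coprime to the characteristic. Then
\[
\langle\varphi_f,\varphi_g\rangle=\frac1q\sum_{x\in\mathbb{F}_q}\psi\big(f(x)-g(x)\big)=\frac1q\sum_{x\in\mathbb{F}_q}\psi(h(x)),
\]
and Weil's bound on such sums gives $\big|\sum_{x\in\mathbb{F}_q}\psi(h(x))\big|\le(\deg h-1)\sqrt q\le(r-1)\sqrt q$, whence $|\langle\varphi_f,\varphi_g\rangle|\le(r-1)/\sqrt q$.

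Finally, part~(c) is the quadratic refinement, where the hypothesis $2=r<\operatorname{char}(\mathbb{F}_q)$ forces the characteristic to be odd. Partition $S(2)$ according to the leading coefficient: for each fixed $c\in\mathbb{F}_q$, the block $\{cx^2+a_1x:a_1\in\mathbb{F}_q\}$ gives $q$ vectors whose pairwise inner products $\frac1q\sum_x\psi((a_1-b_1)x)$ vanish for $a_1\ne b_1$, so each block is an orthonormal basis; this produces the $q$ bases. For two blocks with leading coefficients $c\ne c'$, the relevant inner product is $\frac1q\sum_x\psi\big((c-c')x^2+(a_1-b_1)x\big)$; completing the square, which is legitimate in odd characteristic, reduces this to a quadratic Gauss sum $\sum_x\psi((c-c')x^2)$ of modulus exactly $\sqrt q$, so every cross-block correlation equals $1/\sqrt q$. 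The identity basis is unbiased with each $\varphi_f$ because $|\langle e_x,\varphi_f\rangle|=|\varphi_f(x)|=1/\sqrt q$. Hence the $q$ bases together with the identity basis are mutually unbiased.

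The heart of the argument is Weil's estimate, which I would take as known; the no-constant-term and degree conditions defining $S(r)$ are engineered precisely so that $\deg(f-g)$ always lands in the range $\{1,\dots,\operatorname{char}(\mathbb{F}_q)-1\}$, where that estimate is both available and nontrivial. The only other input not reducible to elementary manipulation is the \emph{exact} modulus $\sqrt q$ of the quadratic Gauss sum in part~(c) --- an upper bound alone, as furnished by Weil, would not certify mutual unbiasedness --- so I would supply it via the classical identity $\sum_{x\in\mathbb{F}_q}\psi(\alpha x^2)=\eta(\alpha)\,g(\eta,\psi)$ with $|g(\eta,\psi)|=\sqrt q$, where $\eta$ is the quadratic character of $\mathbb{F}_q^\times$.
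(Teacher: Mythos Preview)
Your proof is correct and follows essentially the same approach as the paper, which simply declares parts~(a) and~(b) straightforward, cites \cite{CasazzaF:06} for part~(c), and invokes Weil's bound (Theorem~2E in \cite{Schmidt:76}) for part~(d). You have supplied exactly the details that those citations stand in for: reducing $\langle\varphi_f,\varphi_g\rangle$ to a complete character sum $\frac{1}{q}\sum_x\psi(h(x))$ with $1\le\deg h\le r<\operatorname{char}(\mathbb{F}_q)$ so that Weil applies, and handling the quadratic case via completing the square and the exact evaluation $|g(\eta,\psi)|=\sqrt q$.
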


Parts (a) and (b) are straightforward.
Part (c) is well known; see~\cite{CasazzaF:06} for example.
Part (d) follows immediately from a celebrated result of Andr\'{e} Weil, specifically, Theorem 2E in~\cite{Schmidt:76}.
We will use this construction to form near-optimal packings in real projective space with the help of two operations: the $\mathbb{C}$-to-$\mathbb{R}$ trick and the Naimark complement.
To be clear, the $\mathbb{C}$-to-$\mathbb{R}$ trick refers to the replacement
\[
a+ib
\mapsto
\left[
\begin{array}{rr}
a&-b\\
b&a
\end{array}
\right].
\]
This operation converts $m\times n$ complex matrices into $2m\times2n$ real matrices with similar properties.
For example, applying the $\mathbb{C}$-to-$\mathbb{R}$ trick to a complex packing (such as the ones in Theorem~\ref{thm.weil}) produces a real packing of smaller or equal coherence.

\begin{theorem}
\label{thm.approx}
If $d+\sqrt{2d+1/4}+1/2\leq n\leq d(d+1)/2$, then there exists an $n$-packing $\Phi$ in $\mathbb{R}\mathbf{P}^{d-1}$ such that
\[
\mu(\Phi)
\leq20\sqrt{6}\cdot\sqrt{\frac{n-d}{d(n-1)}}.
\]
\end{theorem}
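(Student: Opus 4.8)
The plan is to assemble $\Phi$ from the complex packings of Theorem~\ref{thm.weil} using only four moves: the $\mathbb{C}$-to-$\mathbb{R}$ trick, the Naimark complement of a tight frame, deletion of vectors (which cannot increase coherence), and isometric embedding into a larger space (which leaves coherence unchanged). Since deletion and embedding turn any configuration of at least $n$ unit vectors spanning a subspace of dimension at most $d$ into an $n$-packing in $\mathbb{R}\mathbf{P}^{d-1}$ of no greater coherence, it is enough to produce, for every $(d,n)$ in the Gerzon range, one such configuration of coherence at most $20\sqrt6$ times the Welch bound $\sqrt{(n-d)/(d(n-1))}$. I will also use that the $q^{r-1}$ cosets of the linear polynomials inside $S(r)$ each form an orthonormal basis of $\mathbb{C}^q$ (a one-line character-sum check), so that a union of any $k$ of them is a tight frame of $kq$ unit vectors in $\mathbb{C}^q$ of coherence at most $(r-1)/\sqrt q$ by Theorem~\ref{thm.weil}(d); for $r=2$ this is just a subcollection of the mutually unbiased bases in part (c).

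I would then split on the size of $m:=n-d$. If $m\geq d/3$ (in particular whenever $n\geq 2d$), use the Weil packing directly: pick a prime $q$ with $d/4<q\leq d/2$ (Bertrand's postulate), take $r\in\{2,3\}$ large enough that $2q^r\geq n$ (then $r<q=\operatorname{char}(\mathbb{F}_q)$ is automatic once $d$ is not tiny), apply the $\mathbb{C}$-to-$\mathbb{R}$ trick to $\{\varphi_f\}_{f\in S(r)}$ to obtain $2q^r$ unit vectors in $\mathbb{R}^{2q}$ of coherence at most $(r-1)/\sqrt q\leq 4/\sqrt d$, embed into $\mathbb{R}^d$, and delete down to $n$ vectors. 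Since $n-1<n\leq d(d+1)/2<d^2$, one checks that $16(n-1)\leq 2400m$ whenever $m\geq d/3$, and this inequality is exactly $4/\sqrt d\leq 20\sqrt6\sqrt{(n-d)/(d(n-1))}$; so this case closes with a large margin.

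If $m<d/3$, the Welch bound is of order $\sqrt m/d$, far below $1/\sqrt d$, so the direct packing is too blunt and I would route through a Naimark complement. Choose a prime $q$ in the Bertrand window $[M,2M]$ with $M=\max(\lceil m/2\rceil,\lceil\sqrt{(d+m)/2}\,\rceil)$ (so $q$ has size about $\max(m/2,\sqrt{d/2})$, and $2q\leq d$ once $d\geq 36$), form the tight frame of $kq$ unit vectors in $\mathbb{C}^q$ from $k$ of the orthonormal bases above, and apply the $\mathbb{C}$-to-$\mathbb{R}$ trick to obtain a tight frame of $2kq$ unit vectors in $\mathbb{R}^{2q}$ of coherence at most $(r-1)/\sqrt q$. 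Its Naimark complement is a tight frame of $2kq$ unit vectors in $\mathbb{R}^{2(k-1)q}$ with coherence at most $\tfrac{2q}{2kq-2q}\cdot\tfrac{r-1}{\sqrt q}=\tfrac{r-1}{(k-1)\sqrt q}$; embedding into $\mathbb{R}^d$ and deleting down to $n$ vectors gives the desired packing, provided one can choose an integer $k$ with $2\leq k\leq q^{r-1}$, $2(k-1)q\leq d$, and $2kq\geq n$. The lower bound $M\geq\lceil\sqrt{(d+m)/2}\,\rceil$ forces the near-maximal choice $k-1\approx d/(2q)$ to satisfy $k\leq q^{r-1}$ already for $r=2$ (take $r=3$ for extra room), and a short computation then bounds the coherence by roughly $\sqrt q/d$ (or $1/\sqrt q$ when $q$ is near $d/2$), which since $q\lesssim\max(m,\sqrt d)$ and $m>\sqrt{2d}$ lies below $20\sqrt6\sqrt{(n-d)/(d(n-1))}$ by a wide margin.

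The one genuine obstacle is showing a valid $k$ exists in this last case: an integer lies in $[\lceil n/(2q)\rceil,\lfloor d/(2q)\rfloor+1]$ precisely when $d\bmod 2q\leq 2q-m$, a mild congruence condition on $q$, and one must verify that some prime in the available window meets it. This can be arranged because the window contains many primes and the admissible set of $k$ can be further widened by incrementing $r$ or by enlarging $M$ by a bounded factor (all harmlessly absorbed into $20\sqrt6$); the finitely many small $(d,n)$ — those with $d$ below the absolute bound forced by Bertrand's postulate and by $r<\operatorname{char}(\mathbb{F}_q)$ — are checked separately, e.g. against Sloane's tabulated packings. In short, the analysis is routine once the combinatorial bookkeeping of matching the rigid finite-field counts $q^r$, $kq$, $(k-1)q$ to the arbitrary pair $(d,n)$ is carried out, and it is exactly this slack that forces the constant to be a concrete number like $20\sqrt6$ rather than something near $1$.
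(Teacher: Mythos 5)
Your overall architecture matches the paper's: a direct application of the Weil packing of Theorem~\ref{thm.weil} plus the $\mathbb{C}$-to-$\mathbb{R}$ trick when $n-d$ is a constant fraction of $d$ (your first case is essentially the paper's Case~I, with the same $4/\sqrt d$ bound and the same arithmetic), and a Naimark-complement route when $m:=n-d$ is small. The problem is that your second case is not actually a proof: the step you yourself flag as ``the one genuine obstacle''---the existence of an integer $k$ with $\lceil n/(2q)\rceil\leq k\leq\lfloor d/(2q)\rfloor+1$, equivalently a prime $q$ in your window with $d\bmod 2q\leq 2q-m$---is the entire difficulty of this case, and none of your proposed repairs withstands scrutiny. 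Incrementing $r$ is a non-starter: $r$ only enlarges the supply of cosets ($k\leq q^{r-1}$) and worsens the coherence by a factor $r-1$; it does nothing to the granularity $2q$ with which the frame size $2kq$ and the complement dimension $2(k-1)q$ can be tuned, which is what the congruence condition is about. Enlarging the window also fails quantitatively: a prime $q$ is bad precisely when $2q$ divides $d+j$ for some $j\in[1,m-1]$, and the crude count of such divisors only guarantees a good prime once the window reaches up to roughly $m\log d$; but taking $q$ of that size inflates your final bound $(r-1)\sqrt q/d$ to order $\sqrt{\log d}$ times the Welch bound, destroying the constant-factor claim. So the case $m<d/3$ is genuinely open in your write-up.

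The paper's Case~II is engineered specifically to make this obstruction disappear, and the mechanism is worth internalizing. Rather than taking the union of $k$ orthonormal bases in $\mathbb{C}^p$ (which, after the $\mathbb{C}$-to-$\mathbb{R}$ trick, lives in $\mathbb{R}^{2p}$ and whose Naimark complement therefore has dimension $2kp-2p$, landing in the half-open window $[n-2p,\,n)$ that may or may not contain a point $\leq d$), the paper tensors the $k$ mutually unbiased bases against a $2\times k$ harmonic frame: the $2p\times kp$ matrix with blocks $(1/\sqrt2)\,\omega^{ab}U_b$. This keeps the coherence at $1/\sqrt p$ and the frame tight, but doubles the complex ambient dimension to $2p$, so the real Naimark complement has dimension $2kp-4p<(n+2p)-4p=n-2p\leq d$ automatically, using only $2p\geq n-d$. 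The extra $2p$ of slack is exactly what replaces your congruence condition, and it is why the paper needs no case analysis on $d\bmod 2q$ and no appeal to the density of primes beyond Bertrand's postulate. If you want to salvage your version, the cleanest fix is to adopt this doubling (or any bounded-multiplicity tensoring that buys $\Omega(n-d)$ of dimensional slack) rather than to hunt for a prime in a favorable residue class; your remaining estimates, including the absorption of the bottom of the Gerzon range where $m^2<2n$, then go through along the lines of the paper's Cases~II and~III.
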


\begin{proof}
The upper bound is nontrivial (i.e., less than $1$) for some $n$ in the Gerzon range only if $d\geq217$, and so we may assume $d\geq217$ without loss of generality.
We consider three cases:
\medskip

\noindent
\textbf{Case I:} $(5/4) d\leq n \leq d(d+1)/2 $.
Let $p$ be the largest prime satisfying $2p \leq d$, and construct $\{\varphi_f\}_{f\in S(3)}$ with $q=p$.
Apply the $\mathbb{C}$-to-$\mathbb{R}$ trick to produce $2p^3$ unit vectors in $\mathbb{R}^{2p}$ with coherence at most $2/\sqrt{p}$.
Select the first $n$ of these vectors and embed in $\mathbb{R}^d$ to produce an $n$-packing $\Phi$ with 
\[
\mu(\Phi)
\leq\frac{2}{\sqrt{p}}
\leq\frac{4}{\sqrt{d}}
\leq 4\sqrt{5}\cdot\sqrt{\frac{n-d}{d(n-1)}},
\]
where the second step applies Bertrand's postulate and the last step uses $n\geq(5/4)d$.
The fact that $n\leq d(d+1)/2\leq 2p^3$ follows from Bertrand's postulate and $p\geq5$.
\medskip

\noindent
\textbf{Case II:} $d+\sqrt{2d+1}+1\leq n <(5/4)d$.
Let $p$ the be smallest prime satisfying $2p\geq n-d$.
Put $k=\lceil n/(2p)\rceil$ and let $\{U_j\}_{j=0}^{k-1}$ denote mutually unbiased bases over $\mathbb{C}^p$.
The fact that $k\leq p+1$ follows from the assumed lower bound on $n$, which can be rearranged to say $2n\leq(n-d)^2$; indeed, this gives
\[
k
=\left\lceil \frac{n}{2p}\right\rceil
\leq\left\lceil \frac{n}{n-d}\right\rceil
\leq \frac{n}{n-d}+1
\leq \frac{n-d}{2}+1
\leq p+1.
\]
Let $\omega$ denote a primitive $k$th root of unity, and consider the $2p\times kp$ matrix $A$ whose $(a,b)$th $p\times p$ submatrix is given by $(1/\sqrt{2})\omega^{ab}U_b$.
Then the columns of $A$ form a unit norm tight frame with coherence at most $1/\sqrt{p}$.
Apply the $\mathbb{C}$-to-$\mathbb{R}$ trick to produce a tight frame of $2kp$ unit vectors in $\mathbb{R}^{4p}$ with coherence at most $1/\sqrt{p}$.
Taking the Naimark complement then gives a tight frame of $2kp$ unit vectors in $\mathbb{R}^{2(k-2)p}$ with coherence at most $2/((k-2)\sqrt{p})$.
Select the first $n$ of these vectors and embed in $\mathbb{R}^d$ to produce an $n$-packing $\Phi$ with
\[
\mu(\Phi)
\leq\frac{2}{(k-2)\sqrt{p}}
\leq\frac{2}{(\frac{n}{2p}-2)\sqrt{p}}
\leq\frac{2}{(\frac{n}{2(n-d)}-2)\sqrt{\frac{n-d}{2}}}
=4\sqrt{2}\cdot\frac{\sqrt{n-d}}{4d-3n},
\]
where the third step applies Bertrand's postulate.
Since $n<(5/4)d$, we further have
\[
4d-3n>n/5\geq\sqrt{dn}/5>\sqrt{d(n-1)}/5,
\]
with which we may continue the above estimate:
\[
\mu(\Phi)
\leq4\sqrt{2}\cdot\frac{\sqrt{n-d}}{4d-3n}
\leq 20\sqrt{2}\cdot\sqrt{\frac{n-d}{d(n-1)}}.
\]
\medskip

\noindent
\textbf{Case III:} $d+\sqrt{2d+1/4}+1/2\leq n <d+\sqrt{2d+1}+1$.
For each $d$, there is at most one value of $n$ in this case.
As such, take $n'=n+1$, apply the method of Case II, and remove the last vector to get an $n$-packing $\Phi$ in $\mathbb{R}^d$ with
\[
\mu(\Phi)
\leq20\sqrt{2}\cdot\sqrt{\frac{n'-d}{d(n'-1)}}
\leq20\sqrt{2}\cdot\sqrt{\frac{n-d+1}{n-d}}\cdot\sqrt{\frac{n-d}{d(n-1)}}.
\]
As this point, we apply our bounds on $n$:
\[
\sqrt{\frac{n-d+1}{n-d}}
\leq\sqrt{\frac{\sqrt{2d+1}+2}{\sqrt{2d+1/4}+1/2}}
\leq\sqrt{3},
\]
and combining with the previous estimate gives the result.
\end{proof}

We did not attempt to optimize the constant $20\sqrt{6}\approx 48.99$, leaving this for a possible student project.
Judging by Sloane's database~\cite{Sloane:online}, we expect the optimal constant to be less than~2.
The above proof suggests an initialization for a local optimization routine, which amounts to a 49-approximation algorithm for optimal packings in the Gerzon range.
In this spirit, the next section offers sufficient conditions for packings to be locally optimal.

\section{Locally optimal packings}
\label{sec.local}

In this section, we study packings that are locally optimal.
To do so, we first develop some manifold optimization theory that we suspect enjoys applications beyond the scope of this paper (e.g., covariance estimation~\cite{FanLL:16}).
As such, we package this more general material into the following self-contained subsection before applying it to our problem.

\subsection{Passing to convexity: An aside}

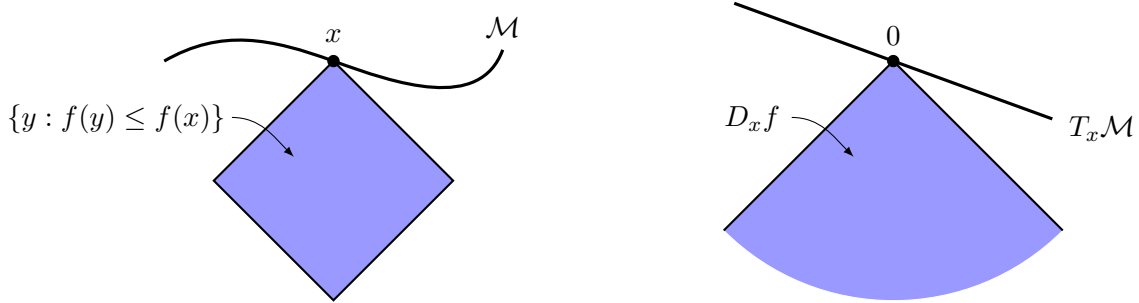
\begin{figure}
\begin{center}
\begin{tabular}{ll}
\begin{tikzpicture}[scale=0.75,>=latex]
\clip (-6,-4.5) rectangle (4,1.5);
\filldraw[thick]
[fill=blue!40!white, draw=black]
[shift={(0,0)},rotate=-135] (0,0) (0,0) rectangle (3,3);
\draw [very thick] (-3,0) to[out=30,in=160] (0,0) to[out=-20,in=250] (3,0.2);
\filldraw [black] (0,0) ellipse [x radius=0.1cm, y radius=0.1cm];
\draw (0,0.45) node {$x$};
\draw (3,0.6) node {$\mathcal{M}$};
\draw [->] (-1.5-0.3,-0.7-0.3) to[out=0,in=135] (-0.4-0.3,-1.4-0.3);
\draw (-3.55-0.3,-0.73-0.3) node {$\{y:f(y)\leq f(x)\}$};
\end{tikzpicture}
\hspace{0.5in}
&
\begin{tikzpicture}[scale=0.75,>=latex]
\clip (-3.5,-4.5) rectangle (4.5,1.5);
\filldraw[fill=blue!40!white, rotate=-135, draw=white]
(0,0) -- (4.25,0) arc (0:90:4.25) -- (0,0);
\draw [thick, rotate=-45] (0,0) -- (4.25,0);
\draw [thick, rotate=-135] (0,0) -- (4.25,0);
\draw [very thick, rotate=-20] (-3,0) -- (3,0);
\filldraw [black] (0,0) ellipse [x radius=0.1cm, y radius=0.1cm];
\draw (0,0.45) node {$0$};
\draw (3.7,-1.2) node {$T_x\mathcal{M}$};
\draw [->] (-1.5-0.3,-0.7-0.3) to[out=0,in=135] (-0.4-0.3,-1.4-0.3);
\draw (-2.2-0.3,-0.73-0.3) node {$D_xf$};
\end{tikzpicture}
\hspace{0.5in}
\end{tabular}
\end{center}
\caption{
\label{figure.local opt}
{\small 
Illustration of Theorem~\ref{thm.strong local general}.
We are interested in determining whether $x\in\mathcal{M}$ is a local minimizer of $f$.
We may locally model both $\mathcal{M}$ and the sublevel set of $x$ in terms of convex objects, namely, the tangent space $T_x\mathcal{M}$ and descent cone $D_xf$, respectively.
Suppose the tangent space intersects the descent cone uniquely at the origin, which can be certified with the help of the dual convex program.
If $f$ is polytopic, then the pointiness of the sublevel set coupled with the smoothness of $\mathcal{M}$ gives that the only nearby $z\in \mathcal{M}$ in the sublevel set of $x$ is $z=x$.
As such, $x$ is a strong local minimizer of $f$ in $\mathcal{M}$.
}\normalsize}
\end{figure}

We say a function $f\colon\mathbb{R}^m\rightarrow\mathbb{R}$ is \textbf{polytopic} if every sublevel set $\{x\in\mathbb{R}^m:f(x)\leq z\}$ is a finite intersection of closed halfspaces.
Here and throughout, given a smooth manifold $\mathcal{M}$ and a point $x\in\mathcal{M}$, we denote the tangent and normal spaces of $\mathcal{M}$ at $x$ by $T_x\mathcal{M}$ and $N_x\mathcal{M}$, respectively.
We say $x\in\mathcal{M}$ is a \textbf{strong local minimizer} of $f$ in $\mathcal{M}$ if there is a neighborhood $\mathcal{N}_x$ of $x$ such that every $y\in\mathcal{M}\cap\mathcal{N}_x\setminus\{x\}$ satisfies $f(y)>f(x)$.

\begin{theorem}
\label{thm.strong local general}
Take any polytopic function $f\colon\mathbb{R}^m\rightarrow\mathbb{R}$ and smooth manifold $\mathcal{M}\subseteq\mathbb{R}^m$.
If $x\in\mathcal{M}$ is the unique minimizer of $f(z)$ subject to $z-x\in T_x\mathcal{M}$, then $x$ is also a strong local minimizer of $f(z)$ subject to $z\in\mathcal{M}$ 
\end{theorem}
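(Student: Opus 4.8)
The plan is to argue by contradiction, extracting a sequence of points in $\mathcal{M}$ that violate the conclusion and rescaling them to produce a tangent vector that also lies in the descent cone of $f$ at $x$, contradicting the hypothesis. First I would set up the geometry: since $f$ is polytopic, the sublevel set $L=\{z\in\mathbb{R}^m:f(z)\leq f(x)\}$ is a finite intersection of closed halfspaces, hence a polyhedron, and $x\in L$. The relevant local object is the cone of feasible directions into $L$ at $x$, namely $D_xf=\{v:x+tv\in L\text{ for all sufficiently small }t>0\}$; because $L$ is polyhedral this cone is closed and is exactly $\bigcap_{k\in K}\{v:\langle a_k,v\rangle\leq 0\}$, where $K$ indexes the halfspaces $\langle a_k,\cdot\rangle\leq b_k$ that are active at $x$ (i.e.\ $\langle a_k,x\rangle=b_k$). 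The hypothesis that $x$ is the unique minimizer of $f(z)$ subject to $z-x\in T_x\mathcal{M}$ translates precisely to $T_x\mathcal{M}\cap D_xf=\{0\}$: any nonzero $v$ in this intersection would give points $x+tv$ (which lie in the affine tangent model and have $f$-value $\leq f(x)$), and by polyhedrality the uniqueness forces no such $v$.

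Now suppose for contradiction that $x$ is not a strong local minimizer. Then there is a sequence $y_j\in\mathcal{M}$ with $y_j\to x$, $y_j\neq x$, and $f(y_j)\leq f(x)$, so $y_j\in L$ for all $j$. Write $y_j=x+t_jv_j$ with $t_j=\|y_j-x\|\to 0$ and $\|v_j\|=1$; passing to a subsequence, $v_j\to v$ with $\|v\|=1$. Since $\mathcal{M}$ is a smooth manifold, $v\in T_x\mathcal{M}$ (this is the standard fact that limits of secant directions along a smooth submanifold lie in the tangent space). On the other hand, because $y_j\in L$ and $L$ is the polyhedron cut out by finitely many halfspaces, for each halfspace $\langle a_k,\cdot\rangle\leq b_k$ we have $\langle a_k,x+t_jv_j\rangle\leq b_k$; for the inactive constraints this is automatic for large $j$, and for the active ones it gives $\langle a_k,v_j\rangle\leq 0$, hence in the limit $\langle a_k,v\rangle\leq 0$ for all $k\in K$, i.e.\ $v\in D_xf$. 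Thus $v\in T_x\mathcal{M}\cap D_xf$ with $\|v\|=1$, contradicting $T_x\mathcal{M}\cap D_xf=\{0\}$. Therefore no such sequence exists, and $x$ is a strong local minimizer.

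The only delicate point — the ``main obstacle'' — is making the translation in the first paragraph airtight, i.e.\ verifying that the abstract hypothesis ``$x$ is the unique minimizer of $f$ on the affine subspace $x+T_x\mathcal{M}$'' is genuinely equivalent to the conic condition $T_x\mathcal{M}\cap D_xf=\{0\}$. One direction is immediate (a nonzero element of the intersection furnishes a competing minimizer $x+tv$ for small $t>0$, since $f(x+tv)\le f(x)$ by definition of $D_xf$ and $x+tv\in x+T_x\mathcal M$). For the converse one uses that $L$ is a polyhedron: if $T_x\mathcal M\cap D_xf=\{0\}$ but some $z\neq x$ in $x+T_x\mathcal M$ had $f(z)\le f(x)$, then the whole segment $[x,z]$ lies in the polyhedron $L$ (convexity) and in the affine subspace, so $z-x$ is a nonzero feasible direction in $T_x\mathcal M\cap D_xf$, a contradiction. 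Everything else is routine; no properties of $f$ beyond polytopicity and no properties of $\mathcal M$ beyond smoothness (to get $C^1$ local charts, hence tangent-cone-equals-tangent-space) are used. I would also remark that polytopicity is exactly what licenses the two key facts: that $D_xf$ is described by the \emph{finitely many} active linear inequalities (so the limit argument closes), and that membership in a sublevel set is inherited along segments.
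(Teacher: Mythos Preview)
Your proof is correct and shares the paper's overall architecture: both reduce the hypothesis to $T_x\mathcal{M}\cap D_xf=\{0\}$, argue by contradiction via a sequence $y_j\to x$ in $\mathcal{M}$ with $f(y_j)\leq f(x)$, and exploit the closedness of the polyhedral descent cone $D_xf$ together with the flatness of $\mathcal{M}$ near $x$. The difference lies in how that flatness is invoked. The paper proves a separate quantitative lemma (Lemma~\ref{lem.curvature}), namely $\|\operatorname{proj}_{N_x\mathcal{M}}(y-x)\|\leq c\|y-x\|^2$ for nearby $y\in\mathcal{M}$, established via geodesics and the exponential map; combined with a uniform lower bound $a>0$ on $\|\operatorname{proj}_{N_x\mathcal{M}}z\|$ over $z\in D_xf\cap\mathbb{S}^{m-1}$ (compactness of the polyhedral cone intersected with the sphere), this forces $\|y_j-x\|\geq a/c$, contradicting $y_j\to x$. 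You instead pass to a subsequence so the unit secant directions $v_j=(y_j-x)/\|y_j-x\|$ converge to some $v$, then cite the qualitative fact that limits of secant directions along a smooth submanifold lie in $T_x\mathcal{M}$, and close by showing $v\in D_xf$ directly from the finitely many active linear inequalities. Your route is somewhat more elementary: the tangent-space fact needs only a $C^1$ local chart, not the Riemannian apparatus behind Lemma~\ref{lem.curvature}, and your subsequence argument replaces the paper's explicit quantitative contradiction. You also handle the equivalence between the uniqueness hypothesis and the conic condition $T_x\mathcal{M}\cap D_xf=\{0\}$ more carefully than the paper, which asserts it in one line.
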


We are particularly interested in this result when $f$ is convex, e.g., $f(z)=\|z\|_1$ or $\|z\|_\infty$, since certifying unique minimizers in such cases is a well-established enterprise.
However, the proof of Theorem~\ref{thm.strong local general} does not require convexity.
Rather, as illustrated in Figure~\ref{figure.local opt}, it uses the fact that any sublevel set of a polytopic function is locally ``pointy,'' whereas a smooth manifold is locally flat:

\begin{lemma}
\label{lem.curvature}
Given a smooth manifold $\mathcal{M}\subseteq\mathbb{R}^m$, fix $x\in\mathcal{M}$. 
There exist $\epsilon,c>0$ such that every $y\in\mathcal{M}$ with $\|y-x\|\leq\epsilon$ satisfies
\[
\|\operatorname{proj}_{N_x\mathcal{M}}(y-x)\|
\leq c\|y-x\|^2.
\]
\end{lemma}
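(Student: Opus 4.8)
The plan is to realize $\mathcal{M}$ near $x$ as the graph of a smooth map over its tangent space and then read off the estimate from Taylor's theorem. First I would translate so that $x=0$: this replaces $\mathcal{M}$ by $\mathcal{M}-x$, carries $T_x\mathcal{M}$ and $N_x\mathcal{M}$ to the (linear) tangent and normal spaces of $\mathcal{M}-x$ at $0$, and satisfies $\operatorname{proj}_{N_x\mathcal{M}}(y-x)=\operatorname{proj}_{N_0(\mathcal{M}-x)}(y-x)$, so no generality is lost. Next, choose orthonormal coordinates on $\mathbb{R}^m$ so that, writing $k=\dim\mathcal{M}$, we have $T_0\mathcal{M}=\mathbb{R}^k\times\{0\}$ and $N_0\mathcal{M}=\{0\}\times\mathbb{R}^{m-k}$, and let $\pi\colon\mathbb{R}^m\to\mathbb{R}^k$ be the projection onto the first factor. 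The differential of $\pi|_{\mathcal{M}}$ at $0$ is the identity on $T_0\mathcal{M}\cong\mathbb{R}^k$, so by the inverse function theorem $\pi|_{\mathcal{M}}$ restricts to a diffeomorphism from a relatively open neighborhood of $0$ in $\mathcal{M}$ onto an open set $U\subseteq\mathbb{R}^k$ containing $0$; its inverse has the form $u\mapsto(u,g(u))$ for a smooth $g\colon U\to\mathbb{R}^{m-k}$. Thus $\mathcal{M}\cap V=\{(u,g(u)):u\in U\}$ for some open $V\ni 0$ in $\mathbb{R}^m$, with $g(0)=0$; and since the tangent space of this graph at $0$ is the graph of $Dg(0)$ and must equal $\mathbb{R}^k\times\{0\}$, we also get $Dg(0)=0$.

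Next I would quantify the flatness. Pick $\delta>0$ with $\overline{B}(0,\delta)\subseteq U$. Since $g$ is smooth, $M:=\sup_{u\in\overline{B}(0,\delta)}\|D^2g(u)\|<\infty$, and Taylor's theorem together with $g(0)=0$ and $Dg(0)=0$ gives $\|g(u)\|\le\tfrac{M}{2}\|u\|^2$ for all $u\in\overline{B}(0,\delta)$. Now choose $\epsilon_0>0$ with the open ball $B(0,\epsilon_0)\subseteq V$, and set $\epsilon=\tfrac{1}{2}\min\{\epsilon_0,\delta\}$. If $y\in\mathcal{M}$ with $\|y\|\le\epsilon$, then $y\in\mathcal{M}\cap V$, so $y=(u,g(u))$ with $u=\pi(y)$; since $\pi$ is an orthogonal projection, $\|u\|\le\|y\|\le\delta$, and $\operatorname{proj}_{N_0\mathcal{M}}(y)=(0,g(u))$. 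Hence
\[
\|\operatorname{proj}_{N_x\mathcal{M}}(y-x)\|=\|g(u)\|\le\tfrac{M}{2}\|u\|^2\le\tfrac{M}{2}\|y-x\|^2,
\]
so $c=M/2$ together with the above $\epsilon$ works.

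The one point requiring genuine care is the topological bookkeeping: one must guarantee that the graph description captures \emph{every} point of $\mathcal{M}$ within Euclidean distance $\epsilon$ of $x$, not merely the points lying in some a priori coordinate patch of $\mathcal{M}$. This is exactly why I intersect with the open set $V\subseteq\mathbb{R}^m$ and use that $\mathcal{M}\cap V$ is open in $\mathcal{M}$; shrinking to $B(0,\epsilon_0)\subseteq V$ then forces any nearby $y\in\mathcal{M}$ into the graph. The remaining ingredients---the vanishing of $Dg(0)$, the quadratic Taylor bound, and the inequality $\|\pi(y)\|\le\|y\|$ coming from orthogonal projection---are entirely routine.
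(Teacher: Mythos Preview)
Your proof is correct, and it takes a genuinely different route from the paper's. The paper works intrinsically: it fixes a closed ball in $T_x\mathcal{M}$, takes geodesics $c_v$ emanating from $x$, uses Taylor's theorem on $c_v(t)$ to bound $\|y-(x+tv)\|\le\tfrac{1}{2}Ct^2$ with $t\delta=\operatorname{dist}_\mathcal{M}(y,x)$, and then invokes the projection theorem together with a separate estimate $\operatorname{dist}_\mathcal{M}(y,x)\le 2\|y-x\|$ (itself obtained by another Taylor argument along the geodesic) to convert intrinsic distance back to Euclidean distance. Your approach is extrinsic and more elementary: you realize $\mathcal{M}$ locally as a graph $(u,g(u))$ over $T_x\mathcal{M}$ via the inverse function theorem, so that the normal component of $y-x$ is literally $g(u)$, and a single second-order Taylor bound on $g$ finishes the job. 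Your argument avoids the Riemannian machinery (exponential map, geodesics, intrinsic distance) entirely and is shorter; the paper's argument, by contrast, makes the geometric content---curvature of geodesics---more visible and would adapt more readily to abstract Riemannian manifolds not sitting in Euclidean space. Your attention to the topological bookkeeping (ensuring the graph patch captures \emph{all} of $\mathcal{M}$ within the chosen Euclidean ball, via the embedded-submanifold property $\mathcal{M}\cap V$ open in $\mathcal{M}$) is exactly the right point to flag.
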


We note that Theorem~\ref{thm.strong local general} does not hold for convex functions in general.
For example, suppose $f(z_1,z_2)=\sqrt{z_1^2+z_2^2}$ and $\mathcal{M}=\{(z_1,z_2):z_1^2+2z_2^2=1\}$, and put $x=(1,0)$.
Then $x$ is the unique minimizer of $f(z)$ subject to $z-x\in T_x\mathcal{M}=\{0\}\times\mathbb{R}$, but $x$ fails to be a strong local minimizer over $\mathcal{M}$.
On the contrary, $x$ is a global maximizer over $\mathcal{M}$!

Also, the converse of Theorem~\ref{thm.strong local general} does not hold.
To see this, suppose $f(z_1,z_2)=\max\{|z_1|,|z_2|\}$ and $\mathcal{M}=\{(z_1,z_2):(z_1-2)^2+z_2^2=1\}$, and put $x=(1,0)$.
Then $x$ is a strong local minimizer over $\mathcal{M}$ (in fact, the unique global minimizer), and $x$ is also a minimizer of $f(z)$ subject to $z-x\in T_x\mathcal{M}=\{0\}\times\mathbb{R}$, but it fails to be unique.

\begin{proof}[Proof of Theorem~\ref{thm.strong local general}]
Consider the descent cone $D_xf$ generated by all $y-x$ such that $y\in\mathbb{R}^m$ with $f(y)\leq f(x)$.
Our uniqueness assumption implies $D_xf\cap T_x\mathcal{M}=\{0\}$, and so $\|\operatorname{proj}_{N_x\mathcal{M}}z\|>0$ for all $z\in D_xf\setminus\{0\}$.
Since $f$ is polytopic, $D_xf$ is a finite intersection of closed halfspaces, and so compactness gives $a>0$ such that $\|\operatorname{proj}_{N_x\mathcal{M}}z\|\geq a$ for every $z\in D_xf\cap\mathbb{S}^{m-1}$.
To prove the theorem, suppose to the contrary that there is a sequence $\{x_i\}_{i=1}^\infty$ in $\mathcal{M}$ converging to $x$ with $x_i\neq x$ and $f(x_i)\leq f(x)$ for all $i$.
Then for each $i$, we have $(x_i-x)/\|x_i-x\|\in D_xf$, and so
\[
a\|x_i-x\|
\leq \left\|\operatorname{proj}_{N_x\mathcal{M}}\frac{x_i-x}{\|x_i-x\|}\right\|\cdot\|x_i-x\|
=\|\operatorname{proj}_{N_x\mathcal{M}}(x_i-x)\|
\leq c\|x_i-x\|^2,
\]
where the last step holds for sufficiently large $i$ by Lemma~\ref{lem.curvature}.
Rearranging then gives $\|x_i-x\|\geq a/c$ for all sufficiently large $i$, contradicting the fact that $x_i\rightarrow x$.
\end{proof}

It remains to prove Lemma~\ref{lem.curvature}, which describes how flat $\mathcal{M}$ is in a neighborhood of $x$.
The proof amounts to an application of Taylor's theorem on the geodesics of $\mathcal{M}$ emanating from $x$.
The reader who is unacquainted with the ideas in the following proof is encouraged to consult a reference text in differential geometry, for example~\cite{GodinhoN:14}.

\begin{proof}[Proof of Lemma~\ref{lem.curvature}]
There is an open set $U\subseteq T_x\mathcal{M}$ containing $0_x$ such that for each $v\in U\setminus\{0\}$, there exists a geodesic $c_v\colon[0,1]\rightarrow\mathcal{M}$ with $c_v(0)=x$ and $c_v'(0)=v$.
Furthermore, the distance between $c_v(t)$ and $x$ along $\mathcal{M}$ is $\operatorname{dist}_\mathcal{M}(c_v(t),x)=\|tv\|$.
The exponential map $\operatorname{exp}_x\colon U\rightarrow M$ is defined in terms of these geodesics by $\operatorname{exp}_x(v)=c_v(1)$.
Pick $\delta>0$ such that the closed $\delta$-ball $\overline{B}_\delta$ in $T_x\mathcal{M}$ is contained in $U$.
Since the exponential map is a diffeomorphism onto some open subset of $\mathcal{M}$ containing $x$, compactness implies
\[
C=\max_{v\in\partial\overline{B}_\delta}\max_{t\in[0,1]}|c_v''(t)|
\]
is well defined.
For every $y\in\operatorname{exp}_x(\overline{B}_\delta)$, there exists $v\in\partial\overline{B}_\delta$ and $t\in[0,1]$ such that $y=c_v(t)$, and so Taylor's theorem gives
\[
\|y-(x+tv)\|
=\|c_v(t)-(c_v(0)+tc_v'(0))\|
\leq \frac{1}{2}Ct^2
= \frac{C}{2\delta^2}\Big(\operatorname{dist}_\mathcal{M}(y,x)\Big)^2.
\]
The projection theorem then gives
\[
\|\operatorname{proj}_{N_x\mathcal{M}}(y-x)\|
=\min_{z\in T_x\mathcal{M}}\|(y-x)-z\|
\leq\|y-(x+tv)\|
\leq\frac{C}{2\delta^2}\Big(\operatorname{dist}_\mathcal{M}(y,x)\Big)^2.
\]
To conclude the proof, we will show that $\operatorname{dist}_\mathcal{M}(y,x)\leq 2\|y-x\|$ whenever
\[
\operatorname{dist}_\mathcal{M}(y,x)
\leq \eta:=\min\left\{\delta,\frac{\delta^2}{2C}\right\}.
\]
Indeed, this will prove the lemma with $c=2C/\delta^2$ and taking $\epsilon$ to be the smallest $\|y-x\|$ such that $\operatorname{dist}_\mathcal{M}(y,x)=\eta$, which exists by compactness.
To this end, we have $\operatorname{dist}_\mathcal{M}(y,x)\leq\delta$, and so $y\in\operatorname{exp}_x(\overline{B}_\delta)$.
As such, we may pick $c_v$ as before to get
\begin{align*}
\operatorname{dist}_\mathcal{M}(y,x)
&=\int_0^t\|c_v'(s)\|ds\\
&\leq\int_0^t\left(\frac{1}{t}\|y-x\|+\frac{1}{t}\|y-(x+tv)\|+\|c_v'(s)-v\|\right)ds\\
&\leq \|y-x\|+\frac{1}{2}Ct^2+\int_0^t\|c_v'(s)-c_v'(0)\|ds\\
&\leq\|y-x\|+Ct^2\\
&=\|y-x\|+\frac{C}{\delta^2}\Big(\operatorname{dist}_\mathcal{M}(y,x)\Big)^2\\
&\leq\|y-x\|+\frac{1}{2}\operatorname{dist}_\mathcal{M}(y,x),
\end{align*}
where the second and third inequalities apply Taylor's theorem and the last step follows from the fact that $\operatorname{dist}_\mathcal{M}(y,x)\leq\delta^2/(2C)$.
Rearranging then gives the result.
\end{proof}

\subsection{Certifying strongly locally optimal packings}

Let $\mathcal{M}_d^n$ denote the set of Gram matrices of spanning $n$-packings in $\mathbb{R}\mathbf{P}^{d-1}$:
\begin{equation}
\label{eq.gram manifold}
\mathcal{M}_d^n
=\{G\in\mathbb{R}^{n\times n}:G^\top =G,~\operatorname{diag}(G)=\mathbf{1},~G\succeq0,~\operatorname{rank}(G)=d\}
\end{equation}
One may verify that $\mathcal{M}_d^n$ is an embedded submanifold of $\mathbb{R}^{n\times n}$ using standard techniques discussed in Section~3.3 of~\cite{AbsilMS:08}.
Since optimal line packings necessarily span (Lemma~\ref{lem.spanning}), we may restrict our optimization to this manifold:
\begin{equation}
\label{eq.manifold optimization}
\tag{MP}
\text{minimize }\|X-I\|_\infty\text{ subject to }X\in\mathcal{M}_d^n
\end{equation}
where $\|\cdot\|_\infty$ denotes the entrywise $\infty$-norm.
We say an $n$-packing in $\mathbb{R}\mathbf{P}^{d-1}$ is \textbf{strongly locally optimal} if its Gram matrix is a strong local minimizer of \eqref{eq.manifold optimization}.
Suppose we want to certify that a given $G\in\mathcal{M}_d^n$ is such a strong local minimizer.
Each sublevel set $\{X\in\mathbb{R}^{n\times n}:\|X-I\|_\infty\leq z\}$ is the intersection of closed halfspaces $H_{ijk}=\{X\in\mathbb{R}^{n\times n}:(-1)^k(X_{ij}-\delta_{ij})\leq z\}$.
As such, $X\mapsto\|X-I\|_\infty$ is polytopic, and so we may apply Theorem~\ref{thm.strong local general} to pass to a convex program:
\begin{equation}
\label{eq.primal}
\tag{P$_G$}
\text{minimize }\|X-I\|_\infty\text{ subject to }X-G\in T_G\mathcal{M}_d^n
\end{equation}
We will certify that $G$ uniquely minimizes \eqref{eq.primal}, thereby certifying that $G$ is a strong local minimizer of \eqref{eq.manifold optimization}, with the help of a dual certificate.
To this end, the dual program is given by
\begin{equation}
\label{eq.dual}
\tag{D$_G$}
\text{maximize }\operatorname{tr}((G-I)Y)\text{ subject to }Y\in N_G\mathcal{M}_d^n,~\|Y\|_1\leq1
\end{equation}
where $\|\cdot\|_1$ denotes the entrywise 1-norm; see the appendix for a derivation.

\begin{lemma}
\label{lem.dual cert}
Take $n>d$, define $\mathcal{M}_d^n$ by \eqref{eq.gram manifold}, and fix $G\in\mathcal{M}_d^n$.
Then $G$ minimizes \eqref{eq.primal} if and only if there exists $0\neq Y\in N_G\mathcal{M}_d^n$ with
\begin{itemize}
\item[(a)]
$(G_{ij}-\delta_{ij})Y_{ij}\geq0$ for all $i,j\in[n]$, and
\item[(b)]
$Y_{ij}=0$ for all $i,j\in[n]$ such that $|G_{ij}-\delta_{ij}|<\|G-I\|_\infty$.
\end{itemize}
In this case, put $S=\{(i,j)\in[n]^2:Y_{ij}\neq0\}$ and define the linear operator $L\colon Z\mapsto\{Z_{ij}\}_{(i,j)\in S}$.
Then $G$ is the unique minimizer of \eqref{eq.primal} if $L$ restricted to $T_G\mathcal{M}_d^n$ is injective.
\end{lemma}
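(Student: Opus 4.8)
The plan is to treat this as a standard convex-duality argument, with one extra observation that upgrades ``a minimizer'' to ``the unique minimizer'' using the injectivity hypothesis. First I would set up the primal–dual pair carefully. Write $z=\|G-I\|_\infty$ and note that, after translating by $G$, minimizing $\|X-I\|_\infty$ over $X-G\in T_G\mathcal{M}_d^n$ is the same as asking whether $G$ itself is optimal, i.e.\ whether there is no feasible descent direction $W\in T_G\mathcal{M}_d^n$ with $\|(G+W)-I\|_\infty<z$. The first-order condition for this is that the zero vector is the minimizer of the linearized problem $\min\{\,\|(G-I)+W\|_\infty : W\in T_G\mathcal{M}_d^n\,\}$; since $\|\cdot\|_\infty$ is convex and the constraint set is a linear subspace, this is equivalent to $0$ belonging to the subdifferential of the restricted objective at $W=0$, which unpacks to: there is a subgradient $Y\in\partial\|\cdot\|_\infty(G-I)$ lying in $(T_G\mathcal{M}_d^n)^\perp=N_G\mathcal{M}_d^n$. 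The subdifferential of the $\infty$-norm at $G-I$ consists precisely of the $Y$ with $\|Y\|_1\le 1$ (dual-norm normalization) supported on the entries achieving the max and signed to agree with $G-I$ there — but since we only need $Y\neq 0$ (not $\|Y\|_1=1$) for the conditions, conditions (a) and (b) are exactly ``$Y$ is a nonzero positive multiple of a subgradient of $\|\cdot\|_\infty$ at $G-I$.'' So the ``$G$ minimizes \eqref{eq.primal} $\iff$ $\exists\,0\neq Y\in N_G\mathcal{M}_d^n$ with (a),(b)'' half is just a restatement of the KKT/subdifferential condition for a convex program over an affine subspace. I would present this cleanly via the duality in the appendix: weak duality gives $\operatorname{tr}((G-I)Y)\le\|X-I\|_\infty$ for all feasible $X,Y$, and a $Y$ satisfying (a),(b) (after scaling to $\|Y\|_1=1$) makes $\operatorname{tr}((G-I)Y)=z$, certifying optimality of $G$; conversely optimality plus strong duality (the primal is a feasibility-nonempty convex program, so Slater-type conditions hold) produces such a $Y$.

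For uniqueness, suppose $G'$ is another minimizer of \eqref{eq.primal}, so $W:=G'-G\in T_G\mathcal{M}_d^n$ and $\|G'-I\|_\infty=z$. Complementary slackness with the certificate $Y$ forces, for every $(i,j)\in S$, that $(G'_{ij}-\delta_{ij})$ attains the same value as $(G_{ij}-\delta_{ij})$: indeed $z=\operatorname{tr}((G'-I)Y)=\sum_{(i,j)\in S}(G'_{ij}-\delta_{ij})Y_{ij}$, and since $\sum|Y_{ij}|=1$ and $|G'_{ij}-\delta_{ij}|\le z$ with $(G_{ij}-\delta_{ij})Y_{ij}=|G_{ij}-\delta_{ij}|\,|Y_{ij}|=z|Y_{ij}|$, equality in the triangle inequality forces $G'_{ij}-\delta_{ij}=G_{ij}-\delta_{ij}$ for all $(i,j)\in S$, i.e.\ $W_{ij}=0$ on $S$, i.e.\ $L(W)=0$. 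But $W\in T_G\mathcal{M}_d^n$, so injectivity of $L|_{T_G\mathcal{M}_d^n}$ gives $W=0$, hence $G'=G$. That is the whole argument for the uniqueness clause.

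The only genuinely delicate point — and the one I expect to be the main obstacle — is making the convex-analysis bookkeeping match the exact form of conditions (a),(b), in particular the normalization. Conditions (a),(b) describe the \emph{cone} of nonnegative subgradients rather than the subdifferential itself, so I need to be careful that (i) the degenerate case $G=I$ (i.e.\ $z=0$) is excluded by $n>d$ forcing some off-diagonal entry to be nonzero — actually $z>0$ because a rank-$d$ Gram matrix of unit vectors with $n>d$ cannot be the identity; and (ii) when I scale a $Y$ satisfying (a),(b) up to unit $\ell^1$-norm I stay inside $N_G\mathcal{M}_d^n$, which holds since $N_G\mathcal{M}_d^n$ is a linear subspace. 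I should also double-check that the support condition (b) together with (a) is equivalent to $Y/\|Y\|_1$ lying in $\partial\|\cdot\|_\infty(G-I)$, which is the textbook characterization $\partial\|w\|_\infty=\{v:\|v\|_1\le1,\ \langle v,w\rangle=\|w\|_\infty\}$ specialized via the equality-in-Hölder condition. None of this is hard, but it is exactly the place where an off-by-a-normalization slip could creep in, so I would write it out explicitly rather than appeal vaguely to ``standard duality.'' The rest — weak duality, strong duality via the appendix derivation, and the complementary-slackness computation above — is routine.
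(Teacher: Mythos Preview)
Your proposal is correct and follows essentially the same approach as the paper: both arguments identify (a) and (b) as the equality case of H\"older's inequality $\operatorname{tr}((G-I)Y)=\|G-I\|_\infty\|Y\|_1$, invoke weak duality for the $(\Leftarrow)$ direction, strong duality via Slater for the $(\Rightarrow)$ direction, and complementary slackness for uniqueness. The only cosmetic differences are that the paper phrases things directly in terms of the dual program \eqref{eq.dual} rather than subdifferentials, and that the paper cites the Welch bound to ensure $Y\neq 0$ whereas your observation that $\operatorname{rank}(G)=d<n$ forces $G\neq I$ (hence $z>0$, hence any dual optimizer is nonzero) is a slightly cleaner way to reach the same conclusion.
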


\begin{proof}
First, we note that (a) and (b) are together equivalent to $\operatorname{tr}((G-I)Y)=\|G-I\|_\infty\|Y\|_1$.
For ($\Leftarrow$), we normalize $\hat{Y}=Y/\|Y\|_1$ for dual feasibility, and then every primal feasible $X$ satisfies
\[
\|G-I\|_\infty
=\operatorname{tr}((G-I)\hat{Y})
\leq\|X-I\|_\infty
\]
by weak duality.
For ($\Rightarrow$), let $Y$ denote any maximizer of \eqref{eq.dual}.
Then strong duality (which holds trivially by Slater's condition~\cite{BoydV:04}) implies $\operatorname{tr}((G-I)Y)=\|G-I\|_\infty\|Y\|_1$, as desired.
Furthermore, the fact that $Y\neq0$ follows from Welch's lower bound on the value of \eqref{eq.manifold optimization}:
\[
0
<\sqrt{\frac{n-d}{d(n-1)}}
\leq\operatorname{val}\eqref{eq.manifold optimization}
\leq\|G-I\|_\infty
=\operatorname{tr}((G-I)Y),
\]
where the last step again applies strong duality.

Next, we note that by strong duality, every minimizer $X$ of \eqref{eq.primal} necessarily satisfies (a) and (b) with $G$ replaced by $X$.
In particular, every such $X$ must be identical to $G$ at the entries indexed by $S$.
As such, $L(X-G)=0$, and so injectivity implies $X=G$.
\end{proof}

The remainder of this section uses Lemma~\ref{lem.dual cert} to prove the strong local optimality of two infinite families of packings.

\begin{corollary}
\label{cor.srg}
Let $A$ and $B$ denote the adjacency matrices of a $(v,k)$-strongly regular graph and its complement, and let $-\beta<0$ and $m$ denote the smallest eigenvalue of $A$ and its multiplicity, respectively.
Suppose $A$ satisfies $2k+1\neq v<2(k+\beta)$.
Putting $d=v-m$ and $\mu=1/(2\beta-1)$, then $I+\mu A-\mu B$ is the Gram matrix of a strongly locally optimal $v$-packing in $\mathbb{R}\mathbf{P}^{d-1}$.
\end{corollary}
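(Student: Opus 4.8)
The plan is to apply Lemma~\ref{lem.dual cert} to $G := I+\mu A-\mu B$, building an explicit dual certificate whose support forces uniqueness. Since $-\beta<0$ and, as one checks, $v<2(k+\beta)$ forces $A$ to be connected, we may assume $A$ has the three distinct eigenvalues $k>r\ge0>-\beta$ with eigenspaces $\operatorname{span}\{\mathbf 1\}$, $E_r$ (dimension $f$), and $E_{-\beta}$ (dimension $m$); the remaining possibility $A=K_n$ makes $d=1$, so $\mathcal M_1^n$ is finite and the claim is trivial. Writing $B=J-I-A$ gives $G=(1+\mu)I+2\mu A-\mu J$, with eigenvalues $1+\mu(1+2k-v)$ on $\operatorname{span}\{\mathbf 1\}$, $1+\mu(1+2r)$ on $E_r$, and $1+\mu(1-2\beta)$ on $E_{-\beta}$. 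The choice $\mu=1/(2\beta-1)$ annihilates the last, $\mu>0$ and $r\ge0$ make the second strictly positive, and $v<2(k+\beta)$ makes the first strictly positive; hence $G\succeq0$ with $\operatorname{rank}G=v-m=d$ and $\ker G=E_{-\beta}$, and with $\operatorname{diag}(G)=\mathbf 1$ this gives $G\in\mathcal M_d^v$. Finally, $A$ and $B$ have complementary edge sets with $A+B=J-I$, so every off-diagonal entry of $G$ is $\pm\mu$; thus $G$ is equiangular and $\|G-I\|_\infty=\mu$.

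Next I would produce the certificate. For $G\in\mathcal M_d^v$ with $\ker G=E_{-\beta}$, the tangent space is $T_G\mathcal M_d^v=\{Z=Z^\top:\operatorname{diag}(Z)=0,\ PZP=0\}$ and the normal space is $N_G\mathcal M_d^v=\{D+PWP:D\text{ diagonal},\ W=W^\top\}$, where $P$ is the orthogonal projection onto $E_{-\beta}$. Guided by the spectral structure of $A$, I would search for a certificate of the form $Y=c_1I+c_2A+c_3J$. For $Z\in T_G\mathcal M_d^v$ we have $\operatorname{tr}(Z)=0$, and $PZP=0$ forces $Q^\top Z Q=0$ for an orthonormal basis $Q$ of $E_{-\beta}$, hence $\operatorname{tr}(PZ)=0$; substituting $A=\tfrac kv J+rP_{E_r}-\beta P$ and $P_{E_r}=I-\tfrac1v J-P$ then gives $\operatorname{tr}(AZ)=\tfrac{k-r}{v}\operatorname{tr}(JZ)$, so
\[
\operatorname{tr}(YZ)=\Big(c_2\frac{k-r}{v}+c_3\Big)\operatorname{tr}(JZ).
\]
Hence $Y\in N_G\mathcal M_d^v$ as soon as $c_3=-c_2(k-r)/v$; condition (b) of Lemma~\ref{lem.dual cert} requires $Y$ to be hollow (since $|G_{ii}-1|=0<\mu=\|G-I\|_\infty$), forcing $c_1+c_3=0$; and normalizing $c_2=v/(k-r)>0$ yields
\[
Y=I+\frac{v}{k-r}\,A-J,
\]
which one checks is a positive multiple of $\tfrac mv I-P$.

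It remains to check the hypotheses of Lemma~\ref{lem.dual cert}. Condition (a) holds because for $i\ne j$ we have $Y_{ij}=\tfrac v{k-r}-1>0$ on the edges of $A$, where $G_{ij}=\mu>0$, and $Y_{ij}=-1<0$ on the non-edges, where $G_{ij}=-\mu<0$, while $Y_{ii}=0=G_{ii}-1$; and $Y\ne0$. So $G$ minimizes \eqref{eq.primal}. Moreover, every off-diagonal entry of $Y$ is nonzero, so the support $S$ of $Y$ is the set of all off-diagonal index pairs, and the map $L\colon Z\mapsto\{Z_{ij}\}_{(i,j)\in S}$ is injective on $T_G\mathcal M_d^v$ for the trivial reason that a symmetric matrix vanishing both off and on the diagonal is zero. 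By Lemma~\ref{lem.dual cert}, $G$ is the \emph{unique} minimizer of \eqref{eq.primal}, so Theorem~\ref{thm.strong local general} gives that $G$ is a strong local minimizer of \eqref{eq.manifold optimization}; that is, the $v$-packing with Gram matrix $G$ is strongly locally optimal.

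I expect the main obstacle to be identifying $N_G\mathcal M_d^v$ and, with it, the certificate $Y$: the correct move is to restrict the search to the span of $\{I,A,J\}$, and the identity $\operatorname{tr}(PZ)=0$ on tangent vectors is what makes $Y$ essentially unique and forces its off-diagonal sign pattern to agree with that of $G$. The surrounding steps are then routine: positivity of the two surviving eigenvalues of $G$ (which is where $v<2(k+\beta)$ enters), the sign check in (a), and the triviality of the injectivity condition once $Y$ has full off-diagonal support.
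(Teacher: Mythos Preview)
Your proof is correct and follows essentially the same strategy as the paper: construct the same dual certificate (your $Y = I + \tfrac{v}{k-r}A - J$ is, as you note, a positive multiple of $\tfrac{m}{v}I - P$, which is exactly the paper's choice $\hat J + P_{E_r}$ minus its diagonal), verify conditions (a)--(b) of Lemma~\ref{lem.dual cert} and the injectivity of $L$, and then invoke Theorem~\ref{thm.strong local general}. The only differences are presentational: you characterize $T_G\mathcal M_d^v$ implicitly as $\{Z:\operatorname{diag}Z=0,\ PZP=0\}$ and locate $Y$ by a coefficient search in $\operatorname{span}\{I,A,J\}$, whereas the paper works with the parametric form $Z=\Phi^\top E+E^\top\Phi$ and posits $Y$ directly as the projection onto $\operatorname{col}(G)$ less its diagonal; the paper also invokes eigenvalue integrality (Proposition~\ref{prop.int evals}) to get $\mu>0$, which you assert without proof but which follows more simply from $\beta\ge 1$ (the least eigenvalue of any graph containing an edge is at most $-1$).
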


One can show that whenever a $(v,k)$-strongly regular graph with $v\neq 2k+1$ exists, either the graph or its complement satisfies $v<2(k+\beta)$.
In many cases, the strongly regular graph corresponds to an ETF \`{a} la~\cite{Waldron:09}, and so removing a vector from that ETF produces a strongly locally optimal packing.
In fact, this is how the optimal $5$-packing in $\mathbb{R}\mathbf{P}^2$ is constructed, and Sloane's database suggests that for many $d\times n$ ETFs with $n\geq2d$, one can get away with removing additional vectors.
However, when $n<2d$, removing a vector from an ETF may be suboptimal, for example, Sloane's database identifies a biangular 15-packing in $\mathbb{R}\mathbf{P}^9$ that exhibits lower coherence than the equiangular packing that corresponds to the $(15,6,1,3)$-strongly regular graph, which in turn is obtained by removing a vector from the $10\times16$ ETF.

The $(40,27,18,18)$-strongly regular graph produces a $40$-packing in $\mathbb{R}\mathbf{P}^{15}$ that does not come from an ETF.
Interestingly, this example improves on the corresponding packing in Sloane's database by over 6 percent, and removing up to 5 vectors from this packing also produces improvements. 
Similarly, the $(36,21,12,12)$-strongly regular graph produces a $15\times36$ ETF, and improvements arise by removing up to 5 vectors from this packing as well.

Our proof uses the following result on eigenvalue integrality:

\begin{proposition}[see Lemma~8 in~\cite{SustikTDH:07}]
\label{prop.int evals}
Let $S$ be a symmetric matrix with integer entries whose eigenvalues have distinct multiplicities.
Then every eigenvalue of $S$ is integer.
\end{proposition}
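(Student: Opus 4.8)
The plan is to exploit the interplay between the characteristic polynomial of $S$ and the factorization of its minimal polynomial. Let $\chi(x) = \det(xI - S)$ be the characteristic polynomial; since $S$ has integer entries, $\chi \in \mathbb{Z}[x]$ and is monic. Let $\lambda_1, \ldots, \lambda_k$ denote the distinct eigenvalues of $S$ with multiplicities $m_1, \ldots, m_k$, so that $\chi(x) = \prod_{i=1}^k (x - \lambda_i)^{m_i}$. The hypothesis is that the $m_i$ are pairwise distinct.

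First I would factor $\chi$ into irreducibles over $\mathbb{Q}$: by Gauss's lemma these factors may be taken monic in $\mathbb{Z}[x]$, say $\chi = \prod_{\ell} p_\ell^{a_\ell}$ with the $p_\ell$ distinct monic irreducible polynomials in $\mathbb{Z}[x]$. Since $S$ is symmetric, it is diagonalizable over $\mathbb{R}$, hence its minimal polynomial is $\prod_i (x - \lambda_i)$, which is squarefree; this forces every $a_\ell = 1$, so $\chi = \prod_\ell p_\ell$ is a product of \emph{distinct} monic irreducibles over $\mathbb{Z}$. Each irreducible factor $p_\ell$ has all its roots among the $\lambda_i$, and because $p_\ell$ is irreducible over $\mathbb{Q}$ its roots form a single Galois conjugacy class, hence all have the same multiplicity in $\chi$ — call it $m^{(\ell)}$. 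Thus $p_\ell(x) = \prod_{i : \lambda_i \text{ a root of } p_\ell} (x - \lambda_i)$ and each such $\lambda_i$ carries multiplicity $m^{(\ell)} = \deg p_\ell$ divided by... wait — more precisely, if $p_\ell$ has degree $r_\ell$, its $r_\ell$ roots each appear with some common multiplicity $m^{(\ell)}$ in $\chi$.

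Now comes the key step, where the distinct-multiplicities hypothesis is used. Suppose some $p_\ell$ has degree $r_\ell \geq 2$; then $p_\ell$ has at least two distinct roots $\lambda_i \neq \lambda_j$, and both have multiplicity $m^{(\ell)}$ in $\chi$, i.e. $m_i = m_j = m^{(\ell)}$. This contradicts the assumption that all multiplicities are distinct. Therefore every irreducible factor $p_\ell$ has degree $1$, which means every eigenvalue $\lambda_i$ is a rational root of the monic integer polynomial $\chi$; by the rational root theorem, $\lambda_i \in \mathbb{Z}$. The main obstacle — and the only genuinely substantive point — is the observation that Galois conjugate eigenvalues of an integer matrix share the same multiplicity in the characteristic polynomial; everything else is bookkeeping with Gauss's lemma and the rational root theorem.
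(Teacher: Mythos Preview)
The paper does not supply its own proof of this proposition; it simply cites Lemma~8 of \cite{SustikTDH:07}. So there is nothing in the paper to compare against, and your argument must stand on its own.

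Your core idea is correct and is the standard one: the characteristic polynomial $\chi\in\mathbb{Z}[x]$ is monic, its irreducible factors over $\mathbb{Q}$ have roots forming Galois conjugacy classes, and any Galois automorphism of the splitting field fixes $\chi$ and hence preserves root multiplicities. Consequently all roots of a given irreducible factor $p_\ell$ share the same multiplicity in $\chi$, and if $\deg p_\ell\geq 2$ this contradicts the distinct-multiplicities hypothesis. Linear factors plus the rational root theorem then give integrality.

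There is, however, a genuine slip in the middle: from the minimal polynomial of $S$ being squarefree you conclude ``this forces every $a_\ell=1$, so $\chi=\prod_\ell p_\ell$.'' That is false---the minimal polynomial being squarefree says nothing about the exponents in the factorization of the \emph{characteristic} polynomial (take $S=I_n$ with $n\geq 2$). Fortunately your argument does not actually use this claim: the next sentence correctly reasons directly about the common multiplicity $m^{(\ell)}$ of the roots of $p_\ell$ in $\chi$, and that is all you need. You should simply delete the squarefree/diagonalizability detour; in fact, the symmetry of $S$ plays no essential role in the proof, which works verbatim for any integer matrix.
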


\begin{proof}[Proof of Corollary~\ref{cor.srg}]
The eigenvalues of $A$ are $k$, $\alpha$ and $-\beta$ for some $\alpha,\beta>0$.
Since $v\neq 2k+1$, Proposition~\ref{prop.int evals} gives that $\alpha$ and $\beta$ are integer, and furthermore, the Perron--Frobenius theorem gives that $k>\max\{\alpha,\beta\}$.
Let $\hat{J}$ denote the orthogonal projection onto the all-ones vector, and let $P$ and $Q$ denote orthogonal projections onto the other eigenspaces of $A$ so that
\[
A
=k\hat{J}+\alpha P-\beta Q.
\]
Put $G=I+\mu A-\mu B$, and observe the integrality of $\beta>0$ implies $\mu>0$.
Considering $B=v\hat{J}-I-A$, we then have
\[
(2\beta-1)G
=(2\beta-1)I+A-B
=\big(2(k+\beta)-v\big)\hat{J}+2(\alpha+\beta)P.
\]
Since $v<2(k+\beta)$ by assumption, $G$ is positive semidefinite with rank $d$.

To demonstrate strong local optimality of the corresponding packing, Theorem~\ref{thm.strong local general} gives that it suffices to show $G$ is the unique global minimizer of \eqref{eq.primal}.
To this end, we will select $Y$ to be $\hat{J}+P$ less its diagonal component $D$ and apply Lemma~\ref{lem.dual cert}.
Indeed, $Y\neq0$ because $\hat{J}$ and $P$ lie in the span of $\{I,A,B\}$, and so $D=cI$ for some $c\in\mathbb{R}$, implying $\operatorname{rank} D\neq\operatorname{rank}(\hat{J}+P)$.
Next, writing $G=\Phi^\top\Phi$, then every member of $T_G\mathcal{M}_d^n$ can be expressed as $\Phi^\top E+E^\top\Phi$ for some $E\in\mathbb{R}^{d\times n}$ with $\operatorname{diag}(\Phi^\top E)=\mathbf{0}$.
As such, $Y\in N_G\mathcal{M}_d^n$ follows from
\[
\operatorname{tr}\big(Y(\Phi^\top E+E^\top\Phi)\big)
=2\operatorname{tr}(Y\Phi^\top E)
=2\left(\operatorname{tr}\big((\hat{J}+P)\Phi^\top E\big)-c\operatorname{tr}(\Phi^\top E)\right)
=0,
\]
where the last step applies the facts that $\hat{J}+P$ is the orthogonal projection onto the column space of $\Phi^\top$ and $\operatorname{diag}(\Phi^\top E)=\mathbf{0}$.
Next, a change of basis gives
\[
v(\alpha+\beta)(\hat{J}+P)
=(\beta v+\alpha-k)I+(v-k+\alpha)A-(k-\alpha)B.
\]
Since $v-k+\alpha$ and $k-\alpha$ are both strictly positive, we conclude that the off-diagonal entries of $Y=\hat{J}+P-cI$ are all nonzero and match the sign of the corresponding entries of $G$.
Overall, $Y$ satisfies the conditions in Lemma~\ref{lem.dual cert}(a)--(b), and furthermore, $S$ contains all pairs $(i,j)$ with $i\neq j$, and so $L$'s injectivity follows from the fact that every member of $T_G\mathcal{M}_d^n$ has zero diagonal.
\end{proof}

In the previous proof, our certificate $\hat{J}+P$ is the Gram matrix of a 2-distance tight frame, and such objects were recently studied in~\cite{BargGOY:15}.
While the previous result indicates that ETFs can be used to build smaller packings, the following result uses ETFs to build larger packings:

\begin{corollary}
\label{cor.lifted etfs}
Let $A$ and $B$ denote a $d\times 2d$ equiangular tight frame and its Naimark complement, and let $\theta$ denote any odd multiple of $\pi/8$.
Then the orthobiangular tight frame
\[
\Phi=\left[
\begin{array}{rr}
A\cos\theta & -A\sin\theta\\
B\sin\theta & B\cos\theta
\end{array}
\right]
\]
is a strongly locally optimal $4d$-packing in $\mathbb{R}\mathbf{P}^{2d-1}$.
\end{corollary}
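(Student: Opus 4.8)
The plan is to write down the Gram matrix $G=\Phi^\top\Phi$ explicitly, observe that $\Phi$ is a unit-norm tight frame whose off-diagonal Gram entries take only the values $0$ and $\pm\mu(\Phi)$, and then apply Lemma~\ref{lem.dual cert} with a simple dual certificate. Let $S$ be the Seidel matrix of $A$, so that $A^\top A=I+\mu_0 S$ with $\mu_0=1/\sqrt{2d-1}$; since $A$ and $B$ are Naimark complements with $n=2d$, we get $\tfrac12 A^\top A+\tfrac12 B^\top B=I$, i.e.\ $B^\top B=I-\mu_0 S$. Writing $c=\cos\theta$, $s=\sin\theta$, a block computation then gives
\[
G=\Phi^\top\Phi
=\begin{bmatrix}I+\cos(2\theta)\mu_0 S&-\sin(2\theta)\mu_0 S\\-\sin(2\theta)\mu_0 S&I-\cos(2\theta)\mu_0 S\end{bmatrix},
\qquad
\Phi\Phi^\top=2I_{2d}.
\]
Since $\theta$ is an odd multiple of $\pi/8$, $2\theta$ is an odd multiple of $\pi/4$, so $|\cos 2\theta|=|\sin 2\theta|=1/\sqrt2$; hence $\Phi$ is a tight frame of $4d$ unit vectors in $\mathbb{R}^{2d}$ with $G\in\mathcal{M}_{2d}^{4d}$ (its rank is $\operatorname{rank}\Phi=2d$), all off-diagonal entries of $G$ lie in $\{0,\pm\mu_0/\sqrt2\}$ so $\Phi$ is orthobiangular with $\mu(\Phi)=\mu_0/\sqrt2=1/\sqrt{4d-2}$, and the zero off-diagonal entries occur exactly at the positions $(i,2d+i)$ and $(2d+i,i)$, so the contact graph is $K_{4d}$ with a perfect matching removed.

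Next I would invoke Lemma~\ref{lem.dual cert} with the certificate $Y=G-I$. Tightness gives $Y\in N_G\mathcal{M}_{2d}^{4d}$: using the description $T_G\mathcal{M}_{2d}^{4d}=\{\Phi^\top E+E^\top\Phi:\operatorname{diag}(\Phi^\top E)=\mathbf 0\}$ from the proof of Corollary~\ref{cor.srg}, one computes $\operatorname{tr}\!\big(Y(\Phi^\top E+E^\top\Phi)\big)=2\operatorname{tr}\!\big((\Phi^\top\Phi-I)\Phi^\top E\big)=2\big(\tfrac{4d}{2d}-1\big)\operatorname{tr}(\Phi^\top E)=0$. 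Clearly $Y\neq 0$; condition (a) of Lemma~\ref{lem.dual cert} holds since $(G_{ij}-\delta_{ij})Y_{ij}=(G_{ij}-\delta_{ij})^2\ge 0$; and condition (b) holds because every nonzero off-diagonal entry of $G$ has absolute value exactly $\mu(\Phi)=\|G-I\|_\infty$ (this is where orthobiangularity, hence the odd-multiple-of-$\pi/8$ hypothesis, is essential). It therefore remains to verify the injectivity hypothesis, which is the crux of the argument.

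For injectivity, note the support $S_Y=\{(i,j):Y_{ij}\neq 0\}$ is exactly the edge set of the contact graph, so a tangent vector $Z\in T_G\mathcal{M}_{2d}^{4d}$ with $L(Z)=0$ must be supported on the removed matching (tangent vectors already have zero diagonal); that is, $Z=\left[\begin{smallmatrix}0&\Lambda\\\Lambda&0\end{smallmatrix}\right]$ for some $2d\times 2d$ diagonal matrix $\Lambda$. I would use the orthogonal-complement description $N_G\mathcal{M}_{2d}^{4d}=\{KMK^\top+D:M\ \text{symmetric},\ D\ \text{diagonal}\}$, where the columns of $K$ span $\ker\Phi$; equivalently $T_G\mathcal{M}_{2d}^{4d}=\{Z:\operatorname{diag} Z=0,\ K^\top Z K=0\}$. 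Since the row spaces of $A$ and $B$ are mutually orthogonal complements in $\mathbb{R}^{2d}$ (so $\ker A$ and $\ker B$ are the row spaces of $B$ and $A$), one has $\ker\Phi=\{(cp+sq,\,-sp+cq):p\in\ker A,\ q\in\ker B\}$; thus one may take $K$ with columns $(cp,-sp)$ for $p$ ranging over a basis $R_B$ of $\ker A$ and $(sq,cq)$ for $q$ ranging over a basis $R_A$ of $\ker B$. A short computation then yields
\[
K^\top Z K=\begin{bmatrix}-\sin(2\theta)R_B^\top\Lambda R_B&\cos(2\theta)R_B^\top\Lambda R_A\\\cos(2\theta)R_A^\top\Lambda R_B&\sin(2\theta)R_A^\top\Lambda R_A\end{bmatrix},
\]
and because $\cos 2\theta,\sin 2\theta\neq 0$ this vanishes only if $[\,R_B\ R_A\,]^\top\Lambda[\,R_B\ R_A\,]=0$; as $[\,R_B\ R_A\,]$ is invertible (its columns span $\mathbb{R}^{2d}$), we conclude $\Lambda=0$, i.e.\ $Z=0$. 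Hence $L$ restricted to $T_G\mathcal{M}_{2d}^{4d}$ is injective, Lemma~\ref{lem.dual cert} certifies $G$ as the unique minimizer of \eqref{eq.primal}, and Theorem~\ref{thm.strong local general} (applied to \eqref{eq.manifold optimization}) certifies $\Phi$ as strongly locally optimal.

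I expect the injectivity step to be the main obstacle: extracting a workable closed form for $\ker\Phi$ (equivalently, for the normal space of $\mathcal{M}_{2d}^{4d}$ at $G$) and checking that the only tangent vector supported on the deleted matching is $0$. The remaining ingredients---the computation of $G$ and of $\Phi\Phi^\top$, the orthobiangular and contact-graph bookkeeping, and the verification that $Y=G-I$ satisfies Lemma~\ref{lem.dual cert}(a)--(b)---are routine; the whole argument hinges on the non-degeneracy $\cos 2\theta,\sin 2\theta\neq 0$ together with the complementarity $\ker A\oplus\ker B=\mathbb{R}^{2d}$ of the ETF and its Naimark complement.
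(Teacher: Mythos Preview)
Your proposal is correct. The computation of $G$, the verification that $\Phi$ is an orthobiangular tight frame, and the choice of dual certificate $Y=G-I$ together with the checks of Lemma~\ref{lem.dual cert}(a)--(b) all match the paper's proof essentially verbatim.

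Where you diverge is in the injectivity step, and your route is genuinely different from the paper's. The paper argues \emph{dually}: it shows that the map $K\colon N_G\mathcal{M}\to\mathbb{R}^R$ (restriction to the zero set $R$ of $Y$) is surjective, and deduces injectivity of $L$ on $T_G\mathcal{M}$ by the orthogonality pairing. To exhibit surjectivity it builds explicit rank-one normal vectors $Z_k=z_kz_k^\top$ with $z_k=\delta_k\oplus(-V^{-1}U\delta_k)\in\ker\Phi$, and then checks that the diagonal of $-V^{-1}U$ equals $\cot 2\theta\in\{\pm1\}$, so that each $K(Z_k)$ is a signed standard basis vector. You instead work \emph{primally}: you invoke the standard description $T_G\mathcal{M}=\{Z\text{ symmetric}:\operatorname{diag}Z=0,\ K^\top ZK=0\}$ for the fixed-rank Gram manifold, write down an explicit basis of $\ker\Phi$ via the rotation $(p,q)\mapsto(cp+sq,-sp+cq)$ on $\ker A\oplus\ker B$, and compute $K^\top ZK$ directly for $Z$ supported on the deleted matching. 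Both arguments ultimately hinge on the same two facts---$\cos2\theta,\sin2\theta\neq0$ and $\ker A\oplus\ker B=\mathbb{R}^{2d}$---but your version avoids the surjectivity-implies-injectivity detour and the matrix inversion $V^{-1}U$, at the cost of invoking (without proof) the alternate tangent-space description, which is standard but not stated elsewhere in the paper.
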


\begin{proof}
For convenience, write $A=[a_1\cdots a_{2d}]$, $B=[b_1\cdots b_{2d}]$, and $\Phi=[u_1\cdots u_{2d}~v_1\cdots v_{2d}]$.
It is straightforward to verify that the rows of $\Phi$ are orthogonal with equal norm, and so $\Phi$ is tight.
Since $B$ is the Naimark complement of $A$, we have $\langle b_i,b_j\rangle=2\delta_{ij}-\langle a_i,a_j\rangle$.
With this identity, we derive the Gram matrix of $\Phi$:
$\langle u_i,u_i\rangle=\langle v_i,v_i\rangle=1$ and $\langle u_i,v_i\rangle=0$ for all $i\in[2d]$, and
\[
\langle u_i,u_j\rangle
=\langle a_i,a_j\rangle\cos2\theta,
\qquad
\langle v_i,v_j\rangle
=-\langle a_i,a_j\rangle\cos2\theta,
\qquad
\langle u_i,v_j\rangle
=-\langle a_i,a_j\rangle\sin2\theta
\]
for all $i,j\in[2d]$ with $i\neq j$.
Since $2\theta$ is an odd multiple of $\pi/4$, then $\left|\cos2\theta\right|=\left|\sin2\theta\right|=1/\sqrt{2}$, and so $\Phi$ is orthobiangular.

To demonstrate strong local optimality, Theorem~\ref{thm.strong local general} gives that it suffices to show $G=\Phi^\top\Phi$ is the unique global minimizer of \eqref{eq.primal}.
To this end, we will select $Y$ to be $\Phi^\top\Phi-I$ and apply Lemma~\ref{lem.dual cert}.
Indeed, $Y\neq0$ since $\Phi^\top\Phi$ has nontrivial off-diagonal.
Next, every member of $T_G\mathcal{M}_{2d}^{4d}$ can be expressed as $\Phi^\top E+E^\top\Phi$ for some $E\in\mathbb{R}^{2d\times 4d}$ with $\operatorname{diag}(\Phi^\top E)=\mathbf{0}$.
As such, $Y\in N_G\mathcal{M}_{2d}^{4d}$ follows from
\[
\operatorname{tr}\big(Y(\Phi^\top E+E^\top\Phi)\big)
=2\operatorname{tr}(Y\Phi^\top E)
=2\left(\operatorname{tr}(\Phi^\top\Phi\Phi^\top E)-\operatorname{tr}(\Phi^\top E)\right)
=0,
\]
where the last step applies the facts that $\Phi\Phi^\top=2I$ and $\operatorname{diag}(\Phi^\top E)=\mathbf{0}$.
Furthermore, $Y$ satisfies the conditions in Lemma~\ref{lem.dual cert}(a)--(b) since $Y_{ij}=G_{ij}-\delta_{ij}$ and $Y_{ij}=0$ precisely when $|Y_{ij}|<\|Y\|_\infty$.

It remains to verify that $L$ restricted to $T_G\mathcal{M}_{2d}^{4d}$ is injective.
To this end, taking $R=\{(i,j)\in[n]^2:Y_{ij}=0,~i<j\}$, we will show that $K\colon Z\mapsto\{Z_{ij}\}_{(i,j)\in R}$ restricted to $N_G\mathcal{M}_{2d}^{4d}$ is surjective.
To see why this suffices, pick any $X\in T_G\mathcal{M}_{2d}^{4d}$ such that $L(X)=0$.
Then decomposing over matrix entries gives
\begin{align*}
0
&=\langle X,Z\rangle\\
&=\langle \operatorname{diag} X,\operatorname{diag} Z\rangle+\langle K(X),K(Z)\rangle+\langle K(X^\top),K(Z^\top)\rangle+\langle L(X),L(Z)\rangle\\
&=2\langle K(X),K(Z)\rangle.
\end{align*}
for every $Z\in N_G\mathcal{M}_{2d}^{4d}$, and so $K(X)=0$, meaning every entry of $X$ is zero, i.e., $X=0$, as desired.
To prove surjectivity, it suffices to find matrices $Z_1,\ldots,Z_d\in N_G\mathcal{M}_{2d}^{4d}$ such that each $K(Z_k)$ is a signed version of the $k$th identity basis element.
In particular, we will find $z_k$ in the nullspace of $\Phi$ and put $Z_k=z_kz_k^\top$.
Indeed, write $\Phi=[U~V]$, let $\delta_k$ denote the $k$th identity basis element in $\mathbb{R}^{2d}$, and take $z_k=\delta_k\oplus -V^{-1}U\delta_k$. 
Then a short calculation gives that $K(Z_k)=-(V^{-1}U)_{kk}\delta_k$, and furthermore, the diagonal of $-V^{-1}U$ is constant $\cot2\theta\in\{\pm1\}$.
\end{proof}

\section{Packings from incidence structures}
\label{sec.incidence}

In this section, we use combinatorial designs to construct infinite families of near-optimal packings.
The low-dimensional instances of these infinite families appear in Sloane's database~\cite{Sloane:online} as putatively optimal, and we prove that all of these packings are optimal in a certain weak sense.

An \textbf{incidence structure} is a triple $C=(P,L,I)$, where $P$ is a set of points, $L$ is a set of lines, and $I$ is an incidence relation with the interpretation that $(p,l)\in I$ when the point $p\in P$ lies on the line $l\in L$.
The \textbf{dual structure} of $C=(P,L,I)$ is $C^*=(L,P,I^*)$, where $I^*=\{(l,p):(p,l)\in I\}$.
We say $C$ is $k$-\textbf{uniform} if $|l|=k$ for every $l\in L$.
We say $x$ is an \textbf{intersection number} of $C$ if there exist $l,l'\in L$ such that $|l\cap l'|=x$.
For each $l\in L$, there exists a \textbf{super embedding} $E_l\colon\mathbb{R}^{|l|}\rightarrow\mathbb{R}^l\subseteq\mathbb{R}^P$ such that $\|E_lz\|_q=\|z\|_q$ for every $z\in\mathbb{R}^{|l|}$ and $q\in[1,\infty]$; here, $\mathbb{R}^P$ denotes the vector space of real-valued functions over $P$.
For example, for any enumeration $l=\{p_1,\ldots,p_{|l|}\}$, one may take $E_l\delta_i=\pm\delta_{p_i}$ for each $i\in[|l|]$ and extend linearly; here, $\delta_k$ denotes the $k$th identity basis element in the appropriate vector space.
(In fact, one may apply the super embedding property for $q=2$ and $q=\infty$ to show that every super embedding has this form.)

\begin{lemma}
\label{lem.star product}
Take a $k$-uniform incidence structure $(P,L,I)$ with intersection numbers in $\{0,1\}$, along with vectors $\{v_j\}_{j\in J}$ in $\mathbb{R}^k$ such that $\|v_j\|_2^2=k$ and $\|v_j\|_\infty=1$ for all $j\in J$.
Then $\{E_lv_j\}_{l\in L,j\in J}$ in $\mathbb{R}^P$ satisfies
\[
|\langle E_lv_j,E_{l'}v_{j'}\rangle|
=\left\{
\begin{array}{cl}
|\langle v_j,v_{j'}\rangle|&\text{if }l=l'\\
|l\cap l'|&\text{if }l\neq l'
\end{array}
\right.
\]
\end{lemma}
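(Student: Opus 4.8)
The plan is to exploit the rigidity that the two norm constraints place on each $v_j$, together with the bounded overlaps of the lines. The crucial first observation is that for a vector $v\in\mathbb{R}^k$, the conditions $\|v\|_\infty=1$ and $\|v\|_2^2=k$ force $v\in\{\pm1\}^k$: indeed $\|v\|_\infty=1$ gives $v_i^2\le1$ for all $k$ coordinates, so $\|v\|_2^2\le k$ with equality only when $v_i^2=1$ for every $i$. It is in fact more convenient to apply this not to $v_j$ directly but to $w:=E_lv_j\in\mathbb{R}^P$. By definition of the super embedding, $w$ is supported on $l$; and since $E_l$ preserves both the $\ell^2$- and $\ell^\infty$-norms, $\sum_{p\in l}w_p^2=\|v_j\|_2^2=k=|l|$ while $|w_p|\le1$ for all $p$, so $w_p=\pm1$ for every $p\in l$ and $w_p=0$ otherwise. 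The same applies to $w':=E_{l'}v_{j'}$ with $l'$ in place of $l$.

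Next I would dispatch the case $l=l'$. Here the super embedding property at $q=2$ says precisely that $E_l$ is a linear isometry of $(\mathbb{R}^k,\|\cdot\|_2)$ into $(\mathbb{R}^P,\|\cdot\|_2)$; by the polarization identity a linear isometry preserves inner products, so $\langle E_lv_j,E_lv_{j'}\rangle=\langle v_j,v_{j'}\rangle$, and in particular the absolute values agree.

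Finally, suppose $l\neq l'$. Then $\langle w,w'\rangle=\sum_{p\in P}w_pw'_p=\sum_{p\in l\cap l'}w_pw'_p$, the last step because $w$ vanishes off $l$ and $w'$ vanishes off $l'$. By the first paragraph every term appearing here equals $\pm1$. Since the intersection numbers of the structure lie in $\{0,1\}$, either $l\cap l'=\emptyset$, in which case the sum is empty and $\langle w,w'\rangle=0=|l\cap l'|$, or $|l\cap l'|=1$, in which case the sum is a single $\pm1$ and $|\langle w,w'\rangle|=1=|l\cap l'|$. This covers all cases.

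I do not expect a genuine obstacle: the entire content is the rigidity observation that the $\ell^2$ and $\ell^\infty$ constraints pin the entries of $E_lv_j$ to $\pm1$, after which the $l\neq l'$ case is a support count and the $l=l'$ case is polarization. The one spot deserving a moment's care is the use of the $\ell^2$-isometry property of $E_l$; I would phrase everything in terms of $w=E_lv_j$ and its norms so as to invoke only the defining property of the super embedding rather than its explicit signed-coordinate form.
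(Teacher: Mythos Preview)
Your proof is correct and follows essentially the same approach as the paper: the paper also uses that $E_l$ is an $\ell^2$-isometry (written there as $E_l^*E_l=I_k$) for the $l=l'$ case, and for $l\neq l'$ it reduces to a sum over $l\cap l'$ and invokes the same rigidity observation that $\|E_lv_j\|_2^2=k=|l|$ together with $\|E_lv_j\|_\infty=1$ forces $|(E_lv_j)(p)|=1$ on $l$. The only cosmetic difference is that you front-load the rigidity argument whereas the paper introduces it inside the $|l\cap l'|=1$ case.
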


\begin{proof}
Note that $\|Ex\|_2=\|x\|_2$ for all $x\in\mathbb{R}^k$ implies $E^*E=I_k$.
As such, when $l=l'$, we have 
\[
\langle E_lv_j,E_{l'}v_{j'}\rangle
=\langle E_lv_j,E_lv_{j'}\rangle
=\langle v_j,E_l^*E_lv_{j'}\rangle
=\langle v_j,v_{j'}\rangle.
\]
Otherwise, we may write out the inner product
\[
\langle E_lv_j,E_{l'}v_{j'}\rangle
=\sum_{p\in P}(E_lv_j)(p)(E_{l'}v_{j'})(p)
=\sum_{p\in l\cap l'}(E_lv_j)(p)(E_{l'}v_{j'})(p),
\]
which is $0$ whenever $l\cap l'$ is empty.
For the remaining case where $|l\cap l'|=1$, first note that our assumptions on each $v_j$ imply $\|E_lv_j\|_2^2=k$ and $\|E_lv_j\|_\infty=1$.
Since $E_lv_j\in\mathbb{R}^l$ and $|l|=k$, then $|(E_lv_j)(p)|=1$ for each $p\in l$.
Overall, denoting $p_0\in l\cap l'$, we have
\[
|\langle E_lv_j,E_{l'}v_{j'}\rangle|
=\bigg|\sum_{p\in l\cap l'}(E_lv_j)(p)(E_{l'}v_{j'})(p)\bigg|
=|(E_lv_j)(p_0)||(E_{l'}v_{j'})(p_0)|
=1.
\qedhere
\]
\end{proof}

As an application of Lemma~\ref{lem.star product}, suppose $C$ is the dual of a Steiner system with $t=2$.
Then $1$ is the only intersection number of $C$.
As such, if the $v_j$'s further satisfy $|\langle v_j,v_{j'}\rangle|=1$ for all $j\neq j'$, corresponding to the vertices of a regular simplex in $\mathbb{R}^k$, then $\{E_lv_j\}_{l\in L,j\in J}$ is equiangular.
Finally, the Welch bound gives that these equiangular vectors form an ETF when $k$ is large enough.
This is precisely the construction of Steiner ETFs~\cite{FickusMT:12}; see~\cite{Seidel:73} for Seidel's precursor construction in the context of 2-graphs.
A modification of this construction was recently used to produce so-called Tremain ETFs, which in turn led to new strongly regular graphs~\cite{FickusJMP:16}.
In this paper, we use Lemma~\ref{lem.star product} to construct new infinite families of putatively optimal line packings.

\begin{theorem}
\label{thm.opt obtfs}
Let $q$ be a prime power, and suppose there exists a Hadamard matrix $H$ of order $q+1$ with an all-ones row.
Let $H_-$ denote the $q\times(q+1)$ submatrix obtained by removing this all-ones row.
Then for each of the following pairs $(C,V)$ of incidence structures $C=(P,L,I)$ and vector ensembles $V=\{v_j\}_{j\in J}$ there exist super embeddings $\{E_l\}_{l\in L}$ such that $\{E_lv_j\}_{l\in L,j\in J}$ is an orthobiangular tight frame for its span:
\[
(\mathbf{P}_q,H),
\qquad
(\mathbf{P}_q,H_-^\top),
\qquad
(\mathbf{A}_q,H_-),
\qquad
(\mathbf{A}_q^*,H),
\]
where $\mathbf{P}_q$ and $\mathbf{A}_q$ denote the projective and affine planes of order $q$, respectively.
In each case, the coherence is $1+o(1)$ times Welch's lower bound, and there is no orthobiangular tight frame of the same size in the same space with smaller coherence.
\end{theorem}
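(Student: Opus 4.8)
The plan is to establish the three claims of the theorem in turn — orthobiangular tightness, the $(1+o(1))$ comparison with Welch's bound, and weak optimality — handling the four pairs $(C,V)$ in parallel, since they differ only in bookkeeping. For orthobiangularity, one records that each incidence structure is $k$-uniform with all intersection numbers in $\{0,1\}$: for $\mathbf{P}_q$ and $\mathbf{A}_q^*$ one has $k=q+1$ and any two distinct lines meet in exactly one point, whereas for $\mathbf{A}_q$ one has $k=q$ and distinct lines meet in $0$ or $1$ points. The prescribed vectors satisfy the hypotheses of Lemma~\ref{lem.star product}: $\|v_j\|_\infty=1$ and $\|v_j\|_2^2=k$ hold because $H$ has $\pm1$ entries and $H_-$ is $H$ with a single $\pm1$-valued row removed, and the correlations $|\langle v_j,v_{j'}\rangle|$ lie in $\{0,1\}$ — they are $0$ for the columns of $H$ and of $H_-^\top$ by orthogonality, while the columns of $H_-$ satisfy $\langle v_j,v_{j'}\rangle=-1$ for $j\ne j'$, forming a regular simplex, precisely because the removed row was all-ones. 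Lemma~\ref{lem.star product} then gives that, after rescaling each $E_lv_j$ to unit norm, every off-diagonal Gram entry lies in $\{0,\pm\mu\}$ with $\mu=1/k$.

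For tightness, take the plain super embeddings $\iota_l$, so the frame operator equals $\sum_l\iota_l\bigl(\sum_jv_jv_j^\top\bigr)\iota_l^\top$. In the $(\mathbf{P}_q,H)$ and $(\mathbf{A}_q^*,H)$ cases $\sum_jv_jv_j^\top=HH^\top=(q+1)I$, and in the $(\mathbf{A}_q,H_-)$ case the rows of $H_-$ are orthogonal with squared norm $q+1$, so again $\sum_jv_jv_j^\top=H_-H_-^\top=(q+1)I$; since $\sum_l\iota_l\iota_l^\top$ is a scalar matrix (each point lies on a constant number of lines), the frame is tight for all of $\mathbb{R}^P$. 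In the $(\mathbf{P}_q,H_-^\top)$ case, $\sum_jv_jv_j^\top=H_-^\top H_-=(q+1)I-J$, and using that two points of $\mathbf{P}_q$ lie on a unique line one gets $\sum_l\iota_lJ\iota_l^\top=qI+J$, so the all-ones vector is annihilated and the frame operator is scalar on $\mathbf{1}^\perp$ — which is exactly the span, since each $\iota_lv_j$ has zero coordinate sum. Recording the triples $(n,d,\mu)$ and substituting into \eqref{eq.welch bound}, a direct computation shows that $\mu$ is $(1+o(1))$ times the Welch bound in every family.

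The crux is weak optimality. Let $\Psi$ be any orthobiangular tight frame of $n$ unit vectors spanning $\mathbb{R}^d$, with Gram matrix $I+\mu S$ where $S$ is symmetric, zero-diagonal, $\{0,\pm1\}$-valued, and $\mu=\mu(\Psi)$. Tightness forces $I+\mu S$ to have only the eigenvalues $n/d$ (multiplicity $d$) and $0$ (multiplicity $n-d$), so $S$ has exactly the eigenvalues $\lambda_+=\tfrac{n-d}{d\mu}$ and $\lambda_-=-\tfrac1\mu$. A Hadamard matrix of order $q+1$ forces $q\ge3$, and one checks $d\ne n-d$ in all four families, so these eigenvalues have distinct multiplicities; Proposition~\ref{prop.int evals} then makes them integers, so $1/\mu\in\mathbb{Z}_{>0}$, and integrality of $\lambda_+=\tfrac{n-d}{d}\cdot\tfrac1\mu$ further forces $(q+1)\mid\tfrac1\mu$ in the $(\mathbf{P}_q,H_-^\top)$ case and $q\mid\tfrac1\mu$ in the $(\mathbf{A}_q,H_-)$ case. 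From $(S-\lambda_+I)(S-\lambda_-I)=0$ we read off $(S^2)_{ii}=-\lambda_+\lambda_-=\tfrac{n-d}{d\mu^2}$; the left side counts the nonzero entries of row $i$, so $\tfrac{n-d}{d\mu^2}\le n-1$, that is, $\tfrac1{\mu^2}\le\tfrac{d(n-1)}{n-d}$. Plugging in the parameters of each family, this inequality together with the integrality and divisibility of $1/\mu$ forces $1/\mu\le k$, hence $\mu\ge 1/k=\mu(\{E_lv_j\})$.

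The main obstacle is this last step: one must recognize that orthobiangularity together with tightness pins the Gram matrix down to $I+\mu S$ for a two-eigenvalue integer matrix $S$, so that the integrality lemma applies, and then verify that the resulting Diophantine constraints — integrality of $1/\mu$, divisibility coming from $\tfrac{n-d}{d}$, and $\tfrac1{\mu^2}\le\tfrac{d(n-1)}{n-d}$ — are sharp enough to exclude every hypothetical competitor of strictly smaller coherence. The first two steps are by comparison essentially bookkeeping, the only mild subtlety being the identification of the span as a hyperplane in the $(\mathbf{P}_q,H_-^\top)$ case.
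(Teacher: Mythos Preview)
Your proof is correct and rests on the same two pillars as the paper's --- Lemma~\ref{lem.star product} for orthobiangularity and Proposition~\ref{prop.int evals} for the integrality underlying weak optimality --- but the packaging differs in two places worth noting. For tightness, the paper counts entries of $\Phi^\top\Phi$ and checks $\|\Phi^\top\Phi\|_F^2=k^2n^2/d$, whereas you compute the frame operator $\sum_l\iota_l(\sum_jv_jv_j^\top)\iota_l^\top$ directly; your route makes the hyperplane in the $(\mathbf{P}_q,H_-^\top)$ case appear naturally rather than as a side remark. For weak optimality, the paper passes through Lemma~\ref{lem.integrality}, parametrising competitors by the zero-count $z$ per Gram column and showing case-by-case that $z<z_0$ violates integrality of $d(n-z-1)/(n-d)$ as a perfect square; you instead read off from $(S-\lambda_+I)(S-\lambda_-I)=0$ that $(S^2)_{ii}=(n-d)/(d\mu^2)\le n-1$, i.e.\ $1/\mu^2\le d(n-1)/(n-d)$, and combine this with the integrality of $1/\mu$ to force $1/\mu\le k$ uniformly. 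The two arguments are equivalent under the substitution $1/\mu^2=d(n-z-1)/(n-d)$, but yours avoids the auxiliary lemma and gives a more uniform case analysis. (Incidentally, the divisibility constraints $(q+1)\mid 1/\mu$ and $q\mid 1/\mu$ you record are true but not actually needed: in every case $d(n-1)/(n-d)<(k+1)^2$, so integrality of $1/\mu$ alone already yields $1/\mu\le k$.)
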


We note that low-dimensional instances of these constructions are putatively optimal in Sloane's database; see Table~\ref{table}.
Our proof of the coherence-minimizing properties of these orthobiangular tight frames follows from certain integrality conditions, which we develop here.

\begin{lemma}
\label{lem.integrality}
Let $G$ be the Gram matrix of an orthobiangular tight frame of $n$ vectors in $\mathbb{R}^d$ with $n\neq2d$.
Then there exists an integer $z$ such that every column of $G$ contains exactly $z$ zeros and
\[
\sqrt{\frac{d(n-z-1)}{n-d}},
\qquad
\sqrt{\frac{(n-d)(n-z-1)}{d}}
\]
are both integers.
\end{lemma}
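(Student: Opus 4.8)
The plan is to write $G = I + \beta S$, where $\beta > 0$ is the nonzero angle and $S$ is the symmetric sign-pattern matrix, so that $S$ has zero diagonal and off-diagonal entries in $\{0,\pm 1\}$; in particular $S$ has integer entries. Since $\Phi$ is a unit-norm tight frame, $G = \Phi^\top\Phi$ has eigenvalue $n/d$ with multiplicity $d$ and eigenvalue $0$ with multiplicity $n-d$. Consequently $S = \beta^{-1}(G - I)$ has eigenvalue $\lambda_1 := (n-d)/(d\beta)$ with multiplicity $d$ and eigenvalue $\lambda_2 := -1/\beta$ with multiplicity $n-d$. Because $n \neq 2d$, these two multiplicities are distinct, so Proposition~\ref{prop.int evals} applies to the integer matrix $S$ and tells us that $\lambda_1$ and $\lambda_2$ are both integers. (We may assume $n > d$ and $\beta > 0$, else the statement is vacuous; then $\lambda_1, \lambda_2 \neq 0$.)

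Next I would extract the combinatorial quantity $z$ from this algebraic structure. Since $S$ is symmetric with exactly the two eigenvalues $\lambda_1,\lambda_2$, it satisfies $(S - \lambda_1 I)(S - \lambda_2 I) = 0$, i.e.\ $S^2 = (\lambda_1 + \lambda_2)S - \lambda_1\lambda_2 I$. Reading off the $(i,i)$ entry and using $S_{ii} = 0$ gives $(S^2)_{ii} = -\lambda_1\lambda_2$; on the other hand $(S^2)_{ii} = \sum_j S_{ij}^2$ counts the nonzero entries in column $i$ of $S$, which is exactly the number of nonzero off-diagonal entries in column $i$ of $G$. Hence every column of $G$ contains precisely $n - 1 + \lambda_1\lambda_2$ zeros (the diagonal entry $1$ is never a zero), and I set $z := n - 1 + \lambda_1\lambda_2$, an integer, so that $n - z - 1 = -\lambda_1\lambda_2$.

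Finally I would convert the definitions of $\lambda_1,\lambda_2$ into the claimed integrality. From $\lambda_1\lambda_2 = -\tfrac{n-d}{d\beta^2}$ and $n - z - 1 = -\lambda_1\lambda_2$ we get $\beta^2 = \tfrac{n-d}{d(n-z-1)}$, whence
\[
\lambda_2^2 = \frac{1}{\beta^2} = \frac{d(n-z-1)}{n-d},
\qquad
\lambda_1^2 = \frac{(n-d)^2}{d^2\beta^2} = \frac{(n-d)(n-z-1)}{d}.
\]
Since $\lambda_1$ and $\lambda_2$ are integers, $\sqrt{d(n-z-1)/(n-d)} = |\lambda_2|$ and $\sqrt{(n-d)(n-z-1)/d} = |\lambda_1|$ are integers, completing the proof. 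The conceptual crux is the middle paragraph: recognizing that the quadratic minimal polynomial of $S$ forces every column to have the \emph{same} number of zeros, governed by the product $\lambda_1\lambda_2$. The only point demanding genuine care is the first paragraph, namely confirming that $n \neq 2d$ is exactly the hypothesis that makes the two eigenvalue multiplicities of $S$ distinct so that Proposition~\ref{prop.int evals} is applicable; everything after that is a short computation. This lemma will then feed into the proof that the orthobiangular tight frames of Theorem~\ref{thm.opt obtfs} minimize coherence among orthobiangular tight frames of the same size.
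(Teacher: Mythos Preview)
Your proof is correct and rests on the same key device as the paper's: write $G = I + \mu S$ for an integer sign matrix $S$, observe that tightness forces $G$ (hence $S$) to have exactly two eigenvalues with multiplicities $d$ and $n-d$, and invoke Proposition~\ref{prop.int evals} when $n\neq 2d$ to conclude these eigenvalues are integers.

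The one place you diverge is in establishing that the number of zeros per column is constant. The paper does this \emph{before} invoking integrality, by a direct norm computation: tightness gives $\|\Phi^\top\varphi_i\|_2^2 = n/d$, and expanding the left side as $1+(n-z_i-1)\mu^2$ forces $z_i$ to be constant and simultaneously yields the formula $\mu^2=(n-d)/(d(n-z-1))$. You instead use the quadratic minimal polynomial $(S-\lambda_1 I)(S-\lambda_2 I)=0$ and read off the diagonal of $S^2$ to get $(S^2)_{ii}=-\lambda_1\lambda_2$, which counts the nonzero entries in each row. Both arguments are short; the paper's is perhaps slightly more transparent since it ties $z$ and $\mu$ together in one line, while yours has the virtue of making the constancy of $z$ a consequence of the two-eigenvalue structure alone rather than a separate appeal to the tight-frame identity.
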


\begin{proof}
Without loss of generality, the diagonal of $G$ is all $1$s.
Write $G=\Phi^\top\Phi$ with $\Phi=[\varphi_1\cdots\varphi_n]$, let $\mu$ denote the coherence of $\Phi$, and let $z_i$ denote the number of zeros in the $i$th column of $G$.
Then the squared norm of the $i$th column of $G$ is
\[
1+(n-z_i-1)\mu^2
=\|\Phi^\top\varphi_i\|_2^2
=\frac{n}{d}\|\varphi_i\|_2^2
=\frac{n}{d},
\]
which is constant over $i\in[n]$.
Put $z=z_i$.
Then rearranging gives
\begin{equation}
\label{eq.mu from z}
\mu=\sqrt{\frac{n-d}{d(n-z-1)}}.
\end{equation}
For the integrality conditions, we apply Proposition~\ref{prop.int evals} to $S=(1/\mu)(G-I)$.
Indeed, the eigenvalue multiplicities match those of $G$, namely $d$ and $n-d$, which are distinct since $n\neq 2d$.
Also, the spectrum of $S$ is a shifted and scaled version of the spectrum $\{n/d,0\}$ of $G$, namely $(1/\mu)(n/d-1)$ and $-1/\mu$.
The result then follows by plugging in \eqref{eq.mu from z}.
\end{proof}

Judging by \eqref{eq.mu from z}, it is clear that the coherence $\mu$ of an orthobiangular tight frame is within $1+o(1)$ of the Welch bound precisely when $z=o(n)$.
In words, a vanishing fraction of the Gram matrix entries are $0$s, meaning the frame approaches equiangularity in some sense.

\begin{proof}[Proof of Theorem~\ref{thm.opt obtfs}]
In each case, $C$ has intersection numbers in $\{0,1\}$ and $V$ has inner products in $\{0,-1\}$, and so $\Phi=\{E_lv_j\}_{l\in L,j\in J}$ is orthobiangular by Lemma~\ref{lem.star product}.
We will show that $\Phi$ is tight.
Since each $\|E_lv_j\|_2^2=k$, it suffices to show $\|\Phi^\top\Phi\|_F^2=k^2n^2/d$.
In addition, to demonstrate that these are the orthobiangular tight frames of minimal coherence, \eqref{eq.mu from z} gives that it suffices to show that the integrality conditions in Lemma~\ref{lem.integrality} are violated whenever the Gram matrix has fewer zeros.
We proceed by considering each case individually.

\textbf{Case I:} $(\mathbf{P}_q,H)$.
The projective plane $\mathbf{P}_q$ has $q^2+q+1$ points and $q^2+q+1$ lines, each containing $k=q+1$ points.
As such, $\Phi$ amounts to $n=|L||J|=(q+1)(q^2+q+1)$ vectors in $d=q^2+q+1$ dimensions.
Since $1$ is the only intersection number, each column of $\Phi^\top\Phi$ contains a $k$ on the diagonal, $(|L|-1)|J|=q(q+1)^2$ different $\pm1$s on the off-diagonal, and the remaining $q$ entries are $0$s.
Overall, $\|\Phi^\top\Phi\|_F^2=n(k^2+q(q+1)^2)=k^2n^2/d$, and so $\Phi$ is tight, and furthermore $q=o(n)$ implies the coherence is $1+o(1)$ of the Welch bound.
Next, we claim the integrality conditions in Lemma~\ref{lem.integrality} are violated whenever $z<q$.
Indeed,
\begin{equation}
\label{eq.int1}
\frac{d(n-z-1)}{n-d}
=q^2+2q+2-\frac{z}{q},
\end{equation}
and so integrality requires $q$ to divide $z$.
Also $z\neq0$, since otherwise $\eqref{eq.int1}=(q+1)^2+1$ fails to be a perfect square.

\textbf{Case II:} $(\mathbf{P}_q,H_-^\top)$.
In this case, pick each $E_l$ so that $E_l^*\mathbf{1}=\mathbf{1}$.
Then since each $v_j$ is orthogonal to the all-ones vector, we have $\langle E_lv_j,\mathbf{1}\rangle=\langle v_j,E_l^*\mathbf{1}\rangle=\langle v_j,\mathbf{1}\rangle=0$.
As such, $\Phi$ amounts to $n=|L||J|=q(q^2+q+1)$ vectors in a hyperplane of dimension $d=q(q+1)$.
As before, $1$ is the only intersection number, and so each column of $\Phi^\top\Phi$ contains a $k$ on the diagonal, $(|L|-1)|J|=q^2(q+1)$ different $\pm1$s on the off-diagonal, and the remaining $q-1$ entries are $0$s.
Then $\|\Phi^\top\Phi\|_F^2=n(k^2+q^2(q+1))=k^2n^2/d$, and so $\Phi$ is tight, and furthermore $q-1=o(n)$ implies the coherence is $1+o(1)$ of the Welch bound.
Next, we show the integrality conditions are violated whenever $z<q-1$.
To this end,
\[
\frac{d(n-z-1)}{n-d}
=(q+1)^2+\frac{(q+1)(q-z-1)}{q^2},
\]
and so integrality requires $q^2$ to divide $q-z-1$.
This is not possible when $z<q-1$, since this implies $q^2>q-z-1>0$.

\textbf{Case III:} $(\mathbf{A}_q,H_-)$.
The affine plane $\mathbf{A}_q$ has $q^2$ points and $q(q+1)$ lines, each containing $k=q$ points.
As such, $\Phi$ amounts to $n=|L||J|=q(q+1)^2$ vectors in $d=q^2$ dimensions.
For this incidence structure, lines intersect unless they are parallel, and each line is parallel to $q-1$ other lines.
Each column of $\Phi^\top\Phi$ therefore contains a $k$ on the diagonal, $q^2-1$ different $0$s on the off-diagonal (each coming from a parallel line), and the remaining $q(q^2+q+1)$ entries are $\pm1$s.
Overall, $\|\Phi^\top\Phi\|_F^2=n(k^2+q(q^2+q+1))=k^2n^2/d$, and so $\Phi$ is tight, and furthermore $q^2-1=o(n)$ implies the coherence is $1+o(1)$ of the Welch bound.
Next, we show the integrality conditions are violated whenever $z<q^2-1$.
To this end,
\[
\frac{d(n-z-1)}{n-d}
=q(q+1)-\frac{q(z+q+2)}{q^2+q+1},
\]
and so integrality requires $q^2+q+1$ to divide $z+q+2$.
This is not possible when $z<q^2-1$, since this implies $q^2+q+1>z+q+2>0$.

\textbf{Case IV:} $(\mathbf{A}_q^*,H)$.
The dual $\mathbf{A}_q^*$ of the affine plane has $q(q+1)$ points and $q^2$ lines, each containing $k=q+1$ points.
As such, $\Phi$ amounts to $n=|L||J|=q^2(q+1)$ vectors in $d=q(q+1)$ dimensions.
Since two points in $\mathbf{A}_q$ determine a line, $1$ is the only intersection number of $\mathbf{A}_q^*$, and so each column of $\Phi^\top\Phi$ contains a $k$ on the diagonal, $(|L|-1)|J|=(q-1)(q+1)^2$ different $\pm1$s on the off-diagonal, and the remaining $q$ entries are $0$s.
Overall, $\|\Phi^\top\Phi\|_F^2=n(k^2+(q-1)(q+1)^2)=k^2n^2/d$, and so $\Phi$ is tight, and furthermore $q=o(n)$ implies the coherence is $1+o(1)$ of the Welch bound.
Next, we show the integrality conditions are violated whenever $z<q$.
\begin{equation}
\label{eq.int2}
\frac{d(n-z-1)}{n-d}
=q^2+2q+2-\frac{z-1}{q-1},
\end{equation}
and so integrality requires $q-1$ to divide $z-1$.
Also, $z\neq 1$, since otherwise $\eqref{eq.int2}=(q+1)^2+1$ fails to be a perfect square.
\end{proof}

\section{Sporadic packings}
\label{sec.sporadic}

Every known infinite family of optimal packings in $\mathbb{R}\mathbf{P}^{d-1}$ with $d>2$ achieves equality in either the Welch bound or the orthoplex bound~\cite{FickusM:15,BoykinSTW:05,BodmannH:16}.
Since these packings are all tight frames with small angle sets, we were encouraged to investigate the packings in Sloane's database~\cite{Sloane:online} that share these features (to within numerical precision).
In this section, we describe perfected versions the packings that are not yet known to belong to an infinite family.
Our hope is that these descriptions might replicate how Tremain's notes~\cite{Tremain:09} inspired the infinite families in~\cite{FickusMT:12,FickusJMP:16}.
We index each description by the corresponding packing parameters $(d,n)$.

\subsection{Classical packings}

Each of the $n$-packings in this subsection arise from an antipodal spherical code of $2n$ points by collecting antipodal representatives, much like how the optimal 6-packing in $\mathbb{R}\mathbf{P}^2$ is obtained from the 12 vertices of the icosahedron.

\textbf{(3,12)}
The $24$ vertices of the rhombicuboctahedron may be obtained by taking all even permutations of $(\pm1,\pm1,\pm(1+\sqrt{2}))$.
The corresponding $12$-packing is putatively optimal.

\textbf{(4,60)}
The 600-cell has $16$ vertices of the form $(\pm1,\pm1,\pm1,\pm1)$, 8 vertices obtained from all permutations of $(\pm2,0,0,0)$, and 96 vertices obtained from all even permutations of $(\pm\phi,\pm1,\pm1/\phi,0)$, where $\phi=(1+\sqrt{5})/2$ is the golden ratio.
The corresponding 60-packing can be proved optimal using Delsarte's linear program~\cite{AndreevNN:99,HaasHM:17}.

\textbf{(5,20)}
Let $C=(P,L,I)$ denote the incidence structure corresponding to the complete graph on $5$ vertices, and let $H=[h_1~h_2]$ denote a Hadamard matrix of order $2$.
Then for any choice of super embeddings $\{E_l\}_{l\in L}$, the ensemble $\{E_lh_j\}_{l\in L,j\in[2]}$ forms a putatively optimal packing.
These are the shortest nonzero vectors in the $D_5$ root lattice (Henry Cohn graciously pointed this out to the authors).

\textbf{(6,36)/(7,63)/(8,120)}
The shortest nonzero vectors in the lattice $E_8\subseteq\mathbb{R}^8$ have norm $\sqrt{2}$.
There are 240 shortest nonzero vectors: 112 have the form $\pm\delta_i\pm\delta_j$ for $i,j\in[8]$ with $i\neq j$, and 128 have the from $(\pm1/2,\ldots,\pm1/2)$ with an even number of minus signs.
Intersecting $E_8$ with the orthogonal complement of $\mathbf{1}$ produces the 7-dimensional lattice $E_7$.
There are 56 shortest nonzero vectors of the form $\pm\delta_i\mp\delta_j$ and 70 of the form $(\pm1/2,\ldots,\pm1/2)$ with exactly 4 minus signs, totaling 126 vectors.
Intersecting $E_7$ with the orthogonal complement of $\delta_1+\delta_2$ produces the 6-dimensional lattice $E_6$, which has 72 shortest nonzero vectors: 32 of the from $\pm\delta_i\mp\delta_j$ (where either $i,j\leq 2$ or $i,j>2$) and 40 of the form $(\pm1/2,\ldots,\pm1/2)$ with exactly 4 minus signs, exactly one of which is in the first two coordinates.
In all three cases, the corresponding packing achieves equality in Levenshtein's bound~\cite{Levenshtein:92,HaasHM:17}.

\subsection{Marriage packings}

In certain special settings, one may combine optimal packings to produce larger optimal packings.
Known examples include mutually unbiased bases~\cite{BoykinSTW:05} and the packings in~\cite{BodmannH:16}.
Such ``marriage'' packings are delicate because the sub-packings must interact well for the construction to work.

\textbf{(3,7)}
Take any $3\times 4$ submatrix of a Hadamard matrix of order 4.
Appending the permutations of $(\sqrt{3},0,0)$ produces a 7-packing that achieves equality in the orthoplex bound~\cite{BodmannH:16}.
Modulo rotation, this is the unique optimal 7-packing in $\mathbb{R}\mathbf{P}^2$~\cite{CohnW:12}.

\textbf{(5,16)}
Embed a $5\times 10$ ETF in $\mathbb{R}^6$ by taking all permutations of $\sqrt{5}(1,1,1,-1,-1,-1)$ and selecting antipodal representatives.
Combining with a lifted simplex, specifically, the permutations of $(5,-1,-1,-1,-1,-1,-1)$, produces a 16-packing that achieves equality in the orthoplex bound.

\textbf{(6,22)}
Take the $6\times 16$ ETF that arises from selecting rows the Hadamard transform over $(\mathbb{Z}/2\mathbb{Z})^4$ according to the McFarland difference set~\cite{FickusM:15}.
Each vector in this ETF has all $\pm1$ entries.
Combining these with the permutations of $(\sqrt{6},0,0,0,0,0)$ produces a 22-packing that achieves equality in the orthoplex bound~\cite{BodmannH:16}.

\textbf{(6,63)/(7,91)}
The $7\times 28$ ETF enjoys a natural embedding in $\mathbb{R}^8$, namely, taking all permutations of $x=(3,3,-1,-1,-1,-1,-1,-1)$.
Scaling these vectors by $1/\sqrt{3}$ and combining with the packing associated with $E_7$ produces a putatively optimal packing.
Next, remove $x$ and project the other ETF vectors onto the orthogonal complement of $x$.
Then after scaling these 27 vectors appropriately, they can be combined with the packing associated with $E_6$ to produce a putatively optimal packing.

\subsection{Misfit packings}

The following packings are too peculiar to be associated with the previous constructions, and so we quarantined them into this final subsection.

\textbf{(6,24)}
Put $a=(2,2,2,2)$, $b=(2,-2,-1,1)$ and $c=(1,-1,2,-2)$.
Then the $24$ columns of
\[
\left[
\begin{array}{rrrrrr}
a&0&0&0&b&b\\
0&a&0&0&\phantom{-}c&-c\\
b&b&a&0&0&0\\
c&-c&0&a&0&0\\
0&0&b&b&a&0\\
0&0&\phantom{-}c&-c&0&a
\end{array}
\right]
\]
form a putatively optimal packing.

\textbf{(7,36)}
In an earlier version of this manuscript, we wrote that this packing eludes us.
In response, Henry Cohn supplied us with a construction:
The group $SL(2,8)$ has four $7$-dimensional irreducible complex representations, exactly one of each is defined over the reals.
There are $36$ different $7$-Sylow subgroups in $SL(2,8)$, each fixing a unique line in $\mathbb{R}^7$ through this representation.
This packing of 36 lines is putatively optimal.
See~\cite{IversonJM:17} for an alternate description of this construction.

Let $\{\varphi_i\}_{i\in[36]}$ denote the unit vectors in the packing.
Then $\langle\varphi_i,\varphi_j\rangle\in\{1,\pm1/7,\pm3/7\}$ for all $i,j\in[36]$, i.e., the packing is biangular.
Strangely, if we put
\[
\Phi_i=\varphi_i\varphi_i^\top-\frac{3+\sqrt{2}}{21}I,
\]
then $\{\Phi_i\}_{i\in[36]}$ forms a tight frame for its span, the Naimark complement of which is the $(8,36)$ construction below.

\textbf{(8,32)}
Let $C=(P,L,I)$ denote the unique balanced incomplete block design with parameters $(9,3,1)$; see Example~1.22 in~\cite{MathonR:07}.
Fix $p\in P$ and define $C'=(P',L',I')$ by $P'=P\setminus\{p\}$, $L'=\{l\in L:p\not\in l\}$, $I'=\{(p,l)\in I:p\in P',l\in L'\}$.
Let $H_-=[v_1~v_2~v_3~v_4]$ denote any $3\times4$ submatrix of a Hadamard matrix of order $4$.
Then for any choice of super embeddings $\{E_l\}_{l\in L'}$, the ensemble $\{E_lv_j\}_{l\in L',j\in[4]}$ forms a putatively optimal packing.

\textbf{(8,36)}
The adjacency matrix of any $(36,14,7,4)$-strongly regular graph has an eigenspace of dimension $8$.
The orthogonal projection onto this eigenspace is the Gram matrix of a $2$-distance tight frame \`{a} la~\cite{BargGOY:15}, and is putatively optimal.

\textbf{(10,40)}
Let $C=(P,L,I)$ be the incidence structure whose points are the vertices of the Peterson graph $G$, and whose lines, also indexed by vertices $v\in V(G)$, are comprised of $v$ and its neighborhood in $G$.
Letting $A$ denote the adjacency matrix of $G$, put $2A+I=[m_1\cdots m_{10}]$, and let $H=[h_1\cdots h_4]$ be a Hadamard matrix of order 4.
Then for any choice of super embeddings $\{E_{l_v}\}_{v\in V(G)}$, the ensemble $\{\operatorname{diag}(m_v)E_{l_v}h_j\}_{v\in V(G),j\in[4]}$ forms a putatively optimal packing.
One may interpret this as a generalization of Theorem~\ref{thm.opt obtfs} in which a balanced ternary design plays the role of an incidence structure, specifically, the design given in Example 2.5 of~\cite{HurdS:07}.

\section*{Acknowledgments}

The authors thank Prof.\ Henry Cohn and the anonymous referees for extensive feedback that greatly improved the presentation of both our results and the relevant literature.
This work was partially supported by NSF DMS 1321779, AFOSR F4FGA06060J007, AFOSR Young Investigator Research Program award F4FGA06088J001, and an AFRL Summer Faculty Fellowship Program award.
The views expressed in this article are those of the authors and do not reflect the official policy or position of the United States Air Force, Department of Defense, or the U.S.\ Government.

\appendix

\section{Summary of low-dimensional results}

Table~\ref{table} gives a summary of the low-dimensional instances of our results (along with ETFs from~\cite{FickusM:15} and mutually unbiased bases from~\cite{BoykinSTW:05} for reference).
In each case, the coherence under ``$\mu$'' is rounded up to the next multiple of $10^{-4}$, and for precision, we also report the coherence's minimal polynomial over $\mathbb{Q}$ (we cleared the denominators in favor of integer coefficients).
Next, ``opt'' indicates optimality: C denotes computer-assisted proof, W denotes the Welch bound, O denotes the orthoplex bound, D denotes Delsarte's linear programming bound, and L denotes local optimality.
Starred rows provide substantial improvements over the corresponding packings in Sloane's database~\cite{Sloane:online}.
Finally, we list the number of angles in the packing, whether the packing is a tight frame, as well as some brief notes (such as ``ETF'' if the packing is an equiangular tight frame, or ``equiangular'' if all of the off-diagonal entries in the Gram matrix have the same absolute value).
These notes are not intended to be complete descriptions of the packings; see the referenced ``location'' for more information.

\begin{table}
\begin{center}
\footnotesize{
\begin{tabular}{rrcrcccll}
$d$ & $n$ & $\mu$ & min polynomial & opt & angles & tight & notes & location \\  \hline
3 & 5 & 0.4473 & $5x^2-1$ & C & 1 & - & equiangular & Sec.~\ref{sec.cad} \\
3 & 6 & 0.4473 &  $5x^2-1$& W & 1 & + & ETF & Ref.~\cite{FickusM:15} \\
3 & 7 & 0.5774 & $3x^2-1$ & O & 3 & + & marriage & Sec.~\ref{sec.sporadic}\\
3 & 12 & 0.7446 & $17x^2-14x+1$ &  & 3 & + & rhombicuboctahedron & Sec.~\ref{sec.sporadic} \\
4 & 6   & 0.3334 & $3x-1$  & C & 1 & - & equiangular &  Thm.~\ref{thm.6in4} \\
4 & 12 & 0.5000 & $2x-1$ & O & 2 & + & mutually unbiased bases & Ref.~\cite{BoykinSTW:05} \\
4 & 60 & 0.8091 & $4x^2-2x-1$ & D & 4 & + & 600-cell & Sec.~\ref{sec.sporadic} \\
5 & 7   & 0.2863 & $x^3-9x^2-x+1$ &     & 1 & - & provably optimal? & Eq.~\eqref{eq.conjectured grams} \\
5 & 10 & 0.3334 & $3x-1$ & W & 1 & + & ETF & Ref.~\cite{FickusM:15} \\
5 & 16 & 0.4473 & $5x^2-1$ & O & 3 & + & marriage & Sec.~\ref{sec.sporadic} \\
5 & 20 & 0.5000 & $2x-1$ &  & 2 & + & $D_5$ & Sec.~\ref{sec.sporadic} \\
6 & 8   & 0.2410 & Eq.~\eqref{eq.6by8coherence} & & 2 & - & provably optimal? & Eq.~\eqref{eq.conjectured grams}  \\
6 & 12 & 0.3163 & $10x^2-1$ & L & 2 & + & lifted ETF & Cor.~\ref{cor.lifted etfs}\\
6 & 15 & 0.3334 & $3x-1$ & L & 1 & - & srg(15,8,4,4) & Cor.~\ref{cor.srg} \\
6 & 16 & 0.3334 & $3x-1$ & W & 1 & + & ETF & Ref.~\cite{FickusM:15} \\
6 & 22 & 0.4083 & $6x^2-1$ & O & 3 & + & marriage & Sec.~\ref{sec.sporadic} \\
6 & 24 & 0.4445 & $9x-4$ & & 4 & + & misfit & Sec.~\ref{sec.sporadic} \\
6 & 36 & 0.5000 & $2x-1$ & D & 2 & + & $E_6$ & Sec.~\ref{sec.sporadic} \\
6 & 63 & 0.6124 & $8x^2-3$ & & 4 & + & marriage & Sec.~\ref{sec.sporadic} \\
7 & 9   & 0.2000 & $5x-1$ &  & 1 & - & equiangular & Sec.~\ref{sec.cad}  \\
7 & 10 & 0.2361 & $x^2+4x-1$  & & 1 & - & equiangular & Sec.~\ref{sec.cad} \\
7 & 14 & 0.2774 & $13x^2-1$ & & 1 & + & ETF & Ref.~\cite{FickusM:15} \\
7 & 27 & 0.3334 & $3x-1$ & L & 1 & - & srg(27,16,10,8) & Cor.~\ref{cor.srg} \\
7 & 28 & 0.3334 & $3x-1$ & W & 1 & + & ETF & Ref.~\cite{FickusM:15} \\
7 & 36 & 0.4286 & $7x-3$ &  & 2 & + & misfit & Sec.~\ref{sec.sporadic} \\
7 & 63 & 0.5000 & $2x-1$ & D & 2 & + & $E_7$ & Sec.~\ref{sec.sporadic} \\
7 & 91 & 0.5774 & $3x^2-1$ & & 4 & + & marriage & Sec.~\ref{sec.sporadic} \\
8 & 10 & 0.1828 & $19x^2+2x-1$ & & 1 & - & equiangular & Sec.~\ref{sec.cad} \\
8 & 32 & 0.3334 & $3x-1$ & & 2 & + & misfit & Sec.~\ref{sec.sporadic} \\
8 & 36 & 0.3572 & $14x-5$ & & 2 & + & misfit & Sec.~\ref{sec.sporadic} \\
8 & 120&0.5000 & $2x-1$ & D & 2 & + & $E_8$  & Sec.~\ref{sec.sporadic} \\
9 & 12 & 0.1828 & $19x^2+2x-1$ & & 1 & - & equiangular & Sec.~\ref{sec.cad} \\
9 & 18 & 0.2426 & $17x^2-1$ & W & 1 & + & ETF & Ref.~\cite{FickusM:15} \\
9 & 48 & 0.3334 & $3x-1$ & O & 2 & + & $(\mathbf{A}_2,H_-)$ & Thm.~\ref{thm.opt obtfs} \\
10& 12& 0.1429 & $7x-1$ &  & 1 & - & equiangular & Sec.~\ref{sec.cad} \\
10& 16& 0.2000 & $5x-1$ & W & 1 & + & ETF & Ref.~\cite{FickusM:15} \\
10& 20& 0.2358 & $18x^2-1$ & L & 2 & + & lifted ETF & Cor.~\ref{cor.lifted etfs} \\
10& 40& 0.3077 & $13x-4$ & & 3 & + & misfit & Sec.~\ref{sec.sporadic} \\
11& 14& 0.1578 & $x^3+21x^2+3x-1$ & & 1 & - & equiangular & Sec.~\ref{sec.cad} \\
11& 16& 0.1784 & $9x^2+4x-1$ & & 1 & - & equiangular & Sec.~\ref{sec.cad} \\
11& 18& 0.2000 & $5x-1$ & & 1 & - & equiangular & Sec.~\ref{sec.cad} \\
12& 36& 0.2500 & $4x-1$ &  & 2 & + & $(\mathbf{A}_3^*,H)$ & Thm.~\ref{thm.opt obtfs} \\
12& 39& 0.2500 & $4x-1$ &  & 2 & + & $(\mathbf{P}_3,H_-^\top)$ & Thm.~\ref{thm.opt obtfs} \\
13& 15& 0.1112 & $9x-1$ & & 1 & - & equiangular & Sec.~\ref{sec.cad} \\
13& 18& 0.1590 & $27x^2+2x-1$ & & 1 & - & equiangular & Sec.~\ref{sec.cad} \\
13& 19& 0.1663 & $31x^3+25x^2+x-1$ & & 1 & - & equiangular & Sec.~\ref{sec.cad} \\
13& 26& 0.2000 & $5x-1$ & W & 1 & + & ETF & Ref.~\cite{FickusM:15} \\
13& 52& 0.2500 & $4x-1$ &  & 2 & + & $(\mathbf{P}_3,H)$ & Thm.~\ref{thm.opt obtfs} \\
15& 18& 0.1149 & $41x^2+4x-1$ & & 1 & - & equiangular & Sec.~\ref{sec.cad} \\
15& 21& 0.1429 & $7x-1$ & & 1 & - & equiangular & Sec.~\ref{sec.cad} \\
*15& 30& 0.1857 & $29x^2-1$ & W & 1 & + & ETF & Ref.~\cite{FickusM:15} \\
*15& 35& 0.2000 & $5x-1$ & L & 1 & - & srg(35,18,9,9) & Cor.~\ref{cor.srg} \\
*15& 36& 0.2000 & $5x-1$ & W & 1 & + & ETF & Ref.~\cite{FickusM:15} \\
16& 18& 0.0910 & $11x-1$ & & 1 & - & equiangular & Sec.~\ref{sec.cad} \\
16& 23& 0.1429 & $7x-1$ & & 1 & - & equiangular & Sec.~\ref{sec.cad} \\
*16& 40& 0.2000 & $5x-1$ & L & 1 & - & srg(40,27,18,18) & Cor.~\ref{cor.srg} 
\end{tabular}
}
\end{center}
\caption{\label{table}
{\small
Summary of low-dimensional results.
}
}
\end{table}

\section{Derivation of the dual program \eqref{eq.dual}}

For convenience, we express members of the $\binom{n+1}{2}$-dimensional vector space of real symmetric $n\times n$ matrices in terms of the orthonormal basis $\{\delta_{ij}\}_{i,j\in[n],i\leq j}$ where $\delta_{ij}$ has a $1$ at entry $(i,j)$ and $0$'s elsewhere.
Let $L$ denote the linear operator that maps a symmetric matrix to the column vector of coordinates in this basis.
Let $A$ denote the matrix whose rows provide the coordinates for an orthonormal basis of $N_G\mathcal{M}_d^n$.
Then \eqref{eq.primal} can be re-expressed as
\[
\text{minimize}\quad\|x\|_\infty\quad\text{subject to}\quad Ax=b,
\]
where $x=L(X-I)$ and $b=AL(G-I)$.
Following Section~5 in~\cite{BoydV:04}, the Lagrangian is given by
\[
L(x,\nu)
:=\|x\|_\infty+\nu^\top(Ax-b),
\]
and so the dual program is
\[
g(\nu)
:=\inf_xL(x,\nu)
=\inf_x\Big[\|x\|_\infty+\nu^\top(Ax-b)\Big]
=\inf_x\Big[\|x\|_\infty+ \nu^\top Ax\Big]-\nu^\top b.
\]
At this point, H\"{o}lder's inequality gives $\nu^\top Ax\geq -\|A^\top\nu\|_1\|x\|_\infty$, with equality precisely when the entries of $x$ all satisfy
\[
|x_i|=\|x\|_\infty
\qquad
\text{and}
\qquad
(A^\top\nu)_ix_i\leq0.
\]
In particular, for every $x$, there exists $y$ such that $\|y\|_\infty=\|x\|_\infty$ and $\nu^\top Ay=-\|A^\top\nu\|_1\|x\|_\infty$, leading to the following simplification:
\[
g(\nu)
=\inf_x\bigg[\Big(1-\|A^\top\nu\|_1\Big)\|x\|_\infty\bigg]-\nu^\top b
=\left\{\begin{array}{ll}-b^\top\nu&\text{if }\|A^\top\nu\|_1\leq 1\\-\infty&\text{else.}\end{array}\right.
\]
As such, recalling the definition of $b$, the dual program $\max_\nu g(\nu)$ is equivalent to
\[
\text{minimize}\quad \big(L(G-I)\big)^\top\big(A^\top\nu\big)\quad\text{subject to}\quad\|A^\top\nu\|_1\leq1,
\]
which can be re-expressed as \eqref{eq.dual}.

%
%
%
%
%
%
%
%
%
%

\end{document}